\title{Computing topological zeta functions of groups, algebras, and modules, I}
\author{Tobias Rossmann} \affil{\small
  Fakult\"at f\"ur Mathematik, Universit\"at Bielefeld, D-33501 Bielefeld,
  Germany}
\date{May 2014}
\begin{document}
\maketitle

\begin{abstract}
  \small We develop techniques for computing zeta functions associated with
  nilpotent groups, not necessarily associative algebras, and modules, as well
  as Igusa-type zeta functions.  At the heart of our method lies an explicit
  convex-geometric formula for a class of $p$-adic integrals under
  non-degeneracy conditions with respect to associated Newton polytopes.  Our
  techniques prove to be especially useful for the computation of topological
  zeta functions associated with algebras, resulting in the first systematic
  investigation of their properties.
\end{abstract}

\blankfootnote{\indent{\itshape 2000 Mathematics Subject Classification.}
  11M41, 20F69, 14M25.

  This work is supported by the DFG Priority Programme ``Algorithmic and
  Experimental Methods in Algebra, Geometry and Number Theory'' (SPP 1489).}

\tableofcontents

\section{Introduction}
\label{s:intro}

\paragraph{Zeta functions of nilpotent groups.}
For a finitely generated group $G$, the number $a_n(G)$ of subgroups of $G$ of
index $n$ is finite for all $n\ge 1$.  Suppose that $G$ is torsion-free and
nilpotent.  A powerful tool to analyse the arithmetic properties of the sequence
$a_1(G),a_2(G),\dotsc$ is the \emph{subgroup zeta function} of $G$ introduced by
Grunewald, \mbox{Segal}, and Smith \cite{GSS88}.  It is given by the Dirichlet
series $\zeta_G(s) = \sum_{n=1}^\infty a_n(G) n^{-s}$ which defines an analytic
function in some complex right half-plane.  As a consequence of nilpotency,
$\zeta_G(s)$ admits an Euler product decomposition $\zeta_G(s) = \prod_p
\zeta_{G,p}(s)$ into local factors $\zeta_{G,p}(s) = \sum_{d=0}^\infty
a_{p^d}(G) \dtimes p^{-ds}$ indexed by primes $p$, see \cite[Prop.\ 1.3]{GSS88}.
A much deeper fact \cite[Thm\ 1]{GSS88} is that each $\zeta_{G,p}(s)$ is a
rational function in $p^{-s}$ over $\QQ$.  By considerably advancing the
techniques pioneered in \cite{GSS88}, du Sautoy and Grunewald \cite[Thm\
1.1(1)]{dSG00} derived a fundamental result on nilpotent groups: the degree of
polynomial subgroup growth of $G$, a priori a positive real number, is in fact
rational.  Their proof using zeta functions constitutes a crucial step in the
development of the theory of zeta functions of groups and rings as a subject of
independent interest.

\paragraph{Zeta functions of groups, rings, and modules.}
Two key insights of \cite{GSS88}, some details of which we will recall in
\S\ref{s:groups}, have been central to subsequent developments in the theory of
zeta functions of nilpotent groups.  First, the enumeration of subgroups of such
a group is essentially reduced, via linearisation, to the enumeration of
subrings of an associated nilpotent Lie ring.  Second, the corresponding local
subring zeta functions $\zeta_{\cL,p}(s)$ of such a ring $\cL$ admit explicit
expressions in terms of $p$-adic integrals, thus rendering the subject amenable
to a range of powerful techniques from arithmetic geometry and model theory, in
particular.  By invoking a result of Denef on the rationality of certain
definable integrals, the first major result \cite[Thm\ 3.5]{GSS88} in this
direction is that each $\zeta_{\cL,p}(s)$ is rational in $p^{- s}$ over $\QQ$.
As it turns out, many fundamental properties of $\zeta_{\cL,p}(s)$ remain intact
when the requirement that $\cL$ be a nilpotent Lie ring is relaxed whence the
study of zeta functions of non-associative rings evolved into an area of its
own.

As we will explain in \S\ref{s:groups}, the above framework also accommodates
Solomon's zeta functions \cite{Sol77} enumerating submodules associated with
integral representations; see \cite{BR80} for applications of $p$-adic
integration in this context.  Finally, we note that the theory of representation
growth, a presently very active branch of asymptotic group theory (see e.g.\
\cite{AKOV13}), incorporates much of the machinery alluded to above.

\paragraph{Igusa's local zeta function.}
Following the introduction of techniques from $p$-adic integration, subsequent
major advances in the theory of zeta functions of groups and rings, for example
\cite{dSG00,Vol10}, drew heavily upon results surrounding Igusa's local zeta
function.  We refer to \cite{Den91a,Nic10} for introductions to the area.  Write
$\ZZ_p$ for the ring of $p$-adic integers.  In its simplest incarnation, Igusa's
local zeta function $\Zeta_{f,p}(s)$ associated with a polynomial $f\in
\ZZ_p[X_1,\dotsc,X_n]$ is given by $\Zeta_{f,p}(s) = \int_{\ZZ_p^n}
\abs{f(\xx)}^s \dd\mu(\xx)$, where $\abs{\dtimes}$ denotes the usual $p$-adic
absolute value, $\mu$ is the additive Haar measure on $\ZZ_p^n$ with
$\mu(\ZZ_p^n) = 1$, and $s\in \CC$ satisfies $\Real(s) > 0$.  Igusa'a local zeta
function $\Zeta_{f,p}(s)$ enumerates solutions of congruences $f(\xx) \equiv 0
\bmod p^{d}$ via
\[
\frac{1-p^{-s} \Zeta_{f,p}(s)}{1-p^{-s}} = \sum_{d=0}^\infty \noof{\{ \bar\xx\in
  (\ZZ/p^d)^n : f(\bar\xx) \equiv 0 \bmod {p^d}\}} \dtimes p^{-(s+n)d}.
\]

Igusa proved (see \cite[Thm\ 8.2.1]{Igu00}) that $\Zeta_{f,p}(s)$ is always
rational in $p^{-s}$ over $\QQ$.  For a polynomial $f\in \ZZ[X_1,\dotsc,X_n]$,
we may consider $\Zeta_{f,p}(s)$ for all primes $p$. Denef \cite{Den87} gave a
formula for $\Zeta_{f,p}(s)$ that, in particular, explains the dependence on the
prime $p$.  A fundamental insight of his was that $\Zeta_{f,p}(s)$ can, for
almost all $p$, be expressed explicitly in terms of an embedded resolution of
singularities of $\Spec(\QQ[X_1,\dotsc,X_n]/f)\subset \AA^n_{\QQ}$.

It is evident from Igusa's rationality result what it means to compute
$\Zeta_{f,p}(s)$.  The existence of algorithms for resolution of singularities
in characteristic zero \cite{Vil89,BM97} implies that Denef's approach can, at
least in principle, be used to carry out such computations.  However, the rather
unfavourable complexity estimates of known algorithms for resolving
singularities (cf.\ \cite{BGMW11}) as well as practical experience show that
this approach is only useful in very small dimensions---in fact, often only for
$n \le 3$.

Among the more practically-minded methods for computing $\Zeta_{f,p}(s)$, a
central place is taken by ``toroidal'' ones that, under suitable non-degeneracy
assumptions, yield explicit formulae for $\Zeta_{f,p}(s)$ in terms of the Newton
polytope (or Newton polyhedron) of $f$; see \cite{DH01,VZG08} and the references
in \S\ref{ss:ndlit}.  Such methods go back to work of Khovanskii \cite{Kho77}
and others \cite{Kus76,Var76} on ``toroidal compactifications'' of complex
varieties.  These results have been a major source of inspiration for the work
described in the present article.

\paragraph{Results I: non-degenerate $p$-adic integrals.}
In Sections~\ref{s:polyzeta}--\ref{s:padic_eval}, we study multivariate local
zeta functions defined by $p$-adic integrals attached to globally defined
collections of polynomials.  The rather general class of integrals that we
consider specialises to local subring, subgroup, and submodule zeta functions as
well as to various Igusa-type zeta functions.  Our main result
(Theorem~\ref{thm:evaluate}) is an explicit convex-geometric formula for the
aforementioned integrals under non-degeneracy assumptions of the defining
polynomials with respect to their Newton polytopes.  When applied in the context
of Igusa-type zeta functions, our formula generalises a result of Denef and
Hoornaert \cite{DH01} and provides an alternative to work of Veys and
\Zuniga-Galindo \cite{VZG08}.  To the author's knowledge, our formula is the
first of its kind that applies to subring and submodule zeta functions.

\paragraph{Topological zeta functions.}
The \emph{topological zeta function} $\Zeta_{f,\topo}(s) \in \QQ(s)$ associated
with a polynomial $f\in \ZZ[X_1,\dotsc,X_n]$ was introduced by Denef and Loeser
\cite{DL92}.  It can be thought of as a limit ``$p\to 1$'' of the local zeta
functions $\Zeta_{f,p}(s)$ from above.  For example, it is easy to see that
$\Zeta_{X,p}(s) = \int_{\ZZ_p}\abs{x}^s\dd\mu(x) = \frac{1-p^{-1}}{1-p^{-s-1}}$.
Informally, we obtain $\Zeta_{X,\topo}(s)$ as the constant term of
$\Zeta_{X,p}(s)$, expanded as a series in $p-1$.  In our example, disregarding
questions of convergence, using the binomial series, we find that
$\Zeta_{X,p}(s) = 1/(s+1) + \cO(p-1)$ whence $\Zeta_{X,\topo}(s) = 1/(s+1)$.
While our informal approach fails to be rigorous in a number of ways, the
results of \cite{DL92} serve to fill these gaps.  Using a different approach,
Denef and Loeser \cite{DL98} later obtained $\Zeta_{f,\topo}(s)$ as a
specialisation of the motivic zeta function associated with $f$, another
invention of theirs.  As already observed in \cite{DL92}, topological zeta
functions associated with polynomials retain interesting properties of their
more complicated $p$-adic ancestors.  At the same time, they tend to be more
amenable to computations.  Important contributions to their study have, in
particular, been made by Veys and his collaborators, see e.g.\
\cite{Vey95,LV99,LSV06}.

In \cite{dSL04}, du Sautoy and Loeser introduced motivic subalgebra zeta
functions.  As a by-product, they defined associated topological zeta functions
and gave a number of examples in cases where the motivic zeta function has been
computed.  Informally, if $\cL$ is a possibly non-associative ring of additive
rank $d$, then its topological subring zeta function $\zeta_{\cL,\topo}(s)$ is
the constant term of $(1-p^{-1})^d \zeta_{\cL,p}(s)$ as a series in $p-1$.  For
instance, if $(\ZZ^d,0)$ denotes $\ZZ^d$ endowed with the zero multiplication,
then, as is well-known, $\zeta_{(\ZZ^d,0),p}(s) =
\zeta_p(s)\zeta_p(s-1)\dotsb\zeta_p(s-(d-1))$, where $\zeta_p(s) = 1/(1-p^{-s})$
is the $p$-local factor of the Riemann zeta function.  We thus expect
$\zeta_{(\ZZ^d,0),\topo}(s) = \frac 1 {s(s-1)\dotsb (s-(d-1))}$ which is indeed
the case using the definition of $\zeta_{\cL,\topo}(s)$ given in the present
article (but see Remark~\ref{r:shift}).  For a rigorous treatment, in
\S\ref{s:topzeta}, we give a self-contained introduction to topological zeta
functions, including those arising from the enumeration of subrings, based on
the original approach from \cite{DL92}.

\paragraph{Results II: computing topological zeta functions.}
In \S\ref{s:top_eval}, we continue our investigation of the $p$-adic integrals
in \S\ref{s:padic_eval} from the point of view of topological zeta functions.
As our main result (Theorem~\ref{thm:topeval}), we give explicit and purely
combinatorial formulae for the topological zeta functions associated with our
integrals under non-degeneracy conditions.

In \S\ref{s:app}, we illustrate our formulae by explicitly computing examples of
zeta functions of groups, rings, and modules, old and new.  While such
computations can be carried out by hand in small cases, the true strength of our
approach (the topological part, in particular) lies in its machine-friendly
form.  Indeed, based on Theorem~\ref{thm:topeval}, the author has developed a
practical method for computing topological zeta functions associated with
nilpotent groups, not necessarily associative rings, modules, as well as various
Igusa-type zeta functions under non-degeneracy assumptions.  A detailed account
of the computational techniques and extensions needed to transform the present
theoretical work into such a practical method will be the subject of a separate
article \cite{topzeta2}.  As an illustration, our method allows us to compute
the previously unknown topological subring zeta function of the nilpotent Lie
ring $\Fil_4$, see \eqref{eq:Fil4}. In contrast, according to \cite[\S
2.13]{dSW08}, the $p$-adic subring zeta functions of $\Fil_4$ have so far
resisted ``repeated efforts'' to compute them.  As explained in \S\ref{ss:Fil4},
this particular example concludes the determination of the topological subgroup
zeta functions of all nilpotent groups of Hirsch length at most $5$.

Based on considerable experimental evidence, in \S\ref{s:conjectures}, we state
a number of intriguing conjectures.  It would seem that some of these
conjectures are in fact topological shadows of conjectural properties of
$p$-adic zeta functions that went previously unnoticed.  For example, if $\cL$
is a nilpotent Lie ring of additive rank $d$, then
Conjecture~\ref{conj:comagic_P} predicts that the meromorphic continuations of
$\zeta_{\cL,p}(s)$ and $\zeta_{(\ZZ^d,0),p}(s)$ (where $(\ZZ^d,0)$ denotes
$\ZZ^d$ endowed with the zero multiplication) agree at $s = 0$ in the sense that
$\frac{\zeta_{\cL,p}(s)}{\zeta_{(\ZZ^d,0),p}(s)}\Big\vert_{s=0} = 1$.

\subsection*{Acknowledgements}

I would like to express my gratitude to Christopher Voll.  This project grew out
of numerous discussions with him and continues to benefit greatly from his
insight and support.  Furthermore, I would like to thank Wim Veys for
interesting discussions on topological zeta functions.

\subsection*{{\normalfont \it Notation}}

We let ``$\subset$'' signify not necessarily proper inclusion.  The cardinality
of a set $A$ is denoted by $\noof A$ or $\card A$.  We write $\NN = \{ 1,
2,\dotsc\}$ and $\NN_0 = \NN \cup \{ 0\}$.  The ideal generated by a set or
family $S$ within a given ambient ring is denoted by $\langle S\rangle$.  For a
family $\ff = (f_i)_{i\in I}$ of polynomials, we write $\ff(\xx) =
\bigl(f_i(\xx)\bigr)_{i\in I}$ and similarly for sets of polynomials.  We often
write $\one = (1,\dotsc,1)$.  Base change is usually denoted using subscripts.
We let $\Torus^n = \Spec\bigl(\ZZ[\lambda_1^{\pm 1},\dotsc,\lambda_n^{\pm
  1}]\bigr)$ and we identify $\Torus^n(R) = (R^\times)^n$.  Given a valuation
$\nu$ on a field $K$, we write $\nu(\xx) = (\nu(x_1),\dotsc,\nu(x_n))$ for
$\xx\in K^n$.

\section{Global and local zeta functions of algebras and modules}
\label{s:groups}

In this section, we elaborate on the relationship between local zeta functions
of groups and rings and $p$-adic integration.

\subsection{Subalgebra and submodule zeta functions}
Let $R$ be the ring of integers in a global or a non-Archimedean local field of
characteristic zero.  We consider slight generalisations of subring \cite{GSS88}
and submodule \cite{Sol77} zeta functions.  By a \emph{non-associative algebra},
we always mean a not necessarily associative algebra.

\begin{defns}
  \label{d:subzeta}
  \quad
  \begin{enumerate}
  \item
    \label{d:subzeta1}
    Let $\cA$ be a non-associative $R$-algebra. Suppose that the underlying
    $R$-module of $\cA$ is free of finite rank.  The \emph{subalgebra zeta
      function} of $\cA$ is
    \[
    \zeta_{\cA}(s) = \sum_{n=1}^\infty \noof{\{\cU : \text{$\cU$ is an
        $R$-subalgebra of $\cA$ with $\idx{\cA:\cU} = n$}\}} \dtimes n^{-s},
    \]
    where $\idx{\cA:\cU}$ denotes the cardinality of the $R$-module $\cA/\cU$.
  \item
    \label{d:subzeta2}
    Let $M$ be a free $R$-module of finite rank and let $\cE$ be a subalgebra of
    the associative $R$-algebra $\End_R(M)$.  The \emph{submodule zeta function}
    of $\cE$ acting on $M$ is
    \[
    \zeta_{\cE\acts M}(s) = \sum_{n=1}^\infty \noof{\{U : \text{$U$ is an
        $\cE$-submodule of $M$ with $\idx{M:U} = n$}\}} \dtimes n^{-s}.
    \]
  \end{enumerate}
\end{defns}
\begin{rems}
  \label{r:algmod}
  \quad
  \begin{enumerate}
  \item The subalgebra zeta function specialises to the global ($R = \ZZ$) and
    local ($R = \ZZ_p$) subring zeta functions from \cite[\S 3]{GSS88}.  If $R =
    \ZZ$ and $\cE\otimes \QQ$ is semisimple over $\QQ$, then $\zeta_{\cE\acts
      M}(s)$ is the zeta function introduced by Solomon in \cite{Sol77}; it
    admits an Euler factorisation and $\cE\otimes \ZZ_p$ acting on $M \otimes
    \ZZ_p$ gives rise to its local factors.
  \item
    \label{r:algmod2}
    The \emph{ideal zeta function}
    \[
    \zeta_{\cA}^\normal(s) := \sum_{n=1}^\infty \noof{\{\cU\normal \cA :
      \idx{\cA:\cU} = n\}} \dtimes n^{-s}
    \]
    of $\cA$ defined in \cite[\S 3]{GSS88} (for $R = \ZZ$), itself a
    generalisation of the Dedekind zeta function of a number field, is an
    instance of a submodule zeta function---indeed, $\zeta_{\cA}^\normal(s) =
    \zeta_{\cE\acts \cA}^{\phantom o}(s)$ where $\cE$ is the algebra generated
    by all $R$-module endomorphisms $x\mapsto ax$ and $x\mapsto xa$ of $\cA$.
  \item Since the number of all $R$-submodules of $R^d$ of index $n$ is bounded
    polynomially as a function of $n$,
    each of the formal Dirichlet series above defines an analytic function in
    some complex right half-plane, a property that is in fact enjoyed by a much
    larger class of rings $R$, see \cite{Seg97}.
  \end{enumerate}
\end{rems}

By a \emph{prime} of a number field, we always mean a non-zero prime ideal of
its ring of integers.  Using primary decomposition over Dedekind domains, the
Euler product factorisations in \cite{GSS88,Sol77} take the following form in
the present setting.

\begin{lemma}
  Let $\fo$ be the ring of integers of a number field $k$.  Let $\cA$, $M$, and
  $\cE$ be as in Definition~\ref{d:subzeta} with $R = \fo$.  Then
  $\zeta_{\cA}(s) = \prod_{\fp} \zeta_{\cA\otimes_{\fo}\fo_{\fp}}(s)$ and
  $\zeta_{\cE\acts M}(s) = \prod_{\fp}\zeta_{\cE \otimes_{\fo} \fo_{\fp} \acts M
    \otimes_{\fo} \fo_{\fp}}(s)$, the products being taken over the primes of
  $k$.  \qed
\end{lemma}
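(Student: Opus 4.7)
The plan is to prove both identities by establishing a bijection between finite-index $\fo$-substructures of a fixed free $\fo$-module $V$ of finite rank and families of finite-index $\fo_\fp$-substructures of $V\otimes_\fo\fo_\fp$, indexed by primes $\fp$ of $k$, with all but finitely many entries equal to the full module. Since the subalgebra and submodule cases are formally parallel, I would treat them uniformly; the only ingredient that changes is the structural condition (closure under multiplication, respectively stability under $\cE$).

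First I would recall that every $\fo$-submodule $\cU\subset V$ of finite index $n$ gives rise to a finite $\fo$-module $V/\cU$, which decomposes canonically as a direct sum $\bigoplus_{\fp} (V/\cU)_\fp$ of its $\fp$-primary components (primary decomposition over the Dedekind domain $\fo$). Only finitely many summands are non-zero, and $\idx{V:\cU} = \prod_\fp \idx{V\otimes_\fo\fo_\fp : \cU\otimes_\fo\fo_\fp}$, with almost all factors equal to~$1$. Conversely, given a family $(\cU_\fp)_\fp$ of $\fo_\fp$-submodules with $\cU_\fp = V\otimes_\fo\fo_\fp$ for almost all $\fp$, the intersection $\cU := \bigcap_\fp (\cU_\fp \cap V)$ (the intersection taken inside $V\otimes_\fo k$) is an $\fo$-submodule of $V$ of finite index whose localisation at each $\fp$ recovers $\cU_\fp$. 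This gives the desired bijection at the level of $\fo$-modules.

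Next I would show that the subalgebra and submodule properties are local in the present sense. For the subalgebra case with $V = \cA$: if $\cU$ is a subalgebra, each $\cU\otimes_\fo\fo_\fp$ is clearly a subalgebra of $\cA\otimes_\fo\fo_\fp$; conversely, if all $\cU\otimes_\fo\fo_\fp$ are closed under multiplication, then so is $\cU = \bigcap_\fp (\cU\otimes_\fo\fo_\fp)\cap \cA$, because the multiplication on $\cA$ commutes with localisation. The analogous statement holds for stability under $\cE$. Combining the bijection of the previous step with the multiplicativity of the index and expanding the Euler product formally gives both asserted identities.

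The only real subtlety is handling the ``gluing'' direction of the bijection and confirming that the local structural conditions really do assemble into a global one; this is cleanly dispatched by the faithful flatness of $\fo\to \prod_\fp \fo_\fp$ on finitely generated $\fo$-modules, together with the elementary fact that an $\fo$-submodule of $V$ of finite index is the intersection (inside $V\otimes_\fo k$) of its localisations. Convergence of the Euler product on a common right half-plane is immediate from Remark~\ref{r:algmod} and justifies rearranging the Dirichlet series into a product.
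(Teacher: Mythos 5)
Your argument is correct and is precisely the standard primary-decomposition/local–global argument over the Dedekind domain $\fo$ that the paper invokes (and leaves unproved, citing the analogous factorisations in the sources): a bijection between finite-index substructures and almost-everywhere-trivial families of local ones, multiplicativity of the index, and absolute convergence to justify the rearrangement. No gaps worth noting.
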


Here, $\fo_{\fp}$ denotes the completion of the localisation of $\fo$ at $\fp$
or, equivalently, the valuation ring of the $\fp$-adic completion $k_{\fp}$ of
$k$, and we regarded $\cA\otimes_{\fo} \fo_{\fp}$ and
$\cE\otimes_{\fo}\fo_{\fp}$ as $\fo_{\fp}$-algebras and $M\otimes_{\fo}
\fo_{\fp}$ as an $\fo_{\fp}$-module.

\subsection{Zeta functions of nilpotent groups: linearisation}

The \emph{normal subgroup zeta function} $\zeta_G^\normal(s)$ of a finitely
generated torsion-free nilpotent group $G$ is defined similar to
$\zeta_G^{\phantom o}(s)$ from the introduction by enumerating normal subgroups
of finite index of $G$. The function $\zeta_G^\normal(s)$ shares many properties
with $\zeta_G^{\phantom o}(s)$, in particular the existence of canonical Euler
products and rationality of local factors.

\begin{thm}[{\cite[\S 4]{GSS88}}]
  \label{thm:nilpotent}
  Let $G$ be a finitely generated torsion-free nilpotent group and let $\fL(G)$
  be the nilpotent $\QQ$-Lie algebra attached to $G$ via the Mal'cev
  correspondence.  Let $\cL\subset \fL(G)$ be an arbitrary Lie subalgebra over
  $\ZZ$ which is finitely generated as a $\ZZ$-module and which spans $\fL(G)$
  over $\QQ$.  Then $\zeta_{G,p}^{\phantom o}(s) = \zeta_{\cL,p}^{\phantom
    o}(s)$ and $\zeta_{G,p}^\normal(s) = \zeta_{\cL,p}^\normal(s)$ for almost
  all primes $p$.
\end{thm}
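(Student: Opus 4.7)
The plan is to establish, for almost all primes $p$, a natural bijection between finite-index subgroups (resp.\ normal subgroups) of $G$ having $p$-power index and finite-index $\ZZ_p$-Lie subalgebras (resp.\ Lie ideals) of $\cL\otimes\ZZ_p$. Once this is in place, the equalities $\zeta_{G,p}^{\phantom o}(s) = \zeta_{\cL,p}^{\phantom o}(s)$ and $\zeta_{G,p}^\normal(s) = \zeta_{\cL,p}^\normal(s)$ follow immediately by comparing coefficients of $p^{-ns}$ in the two Dirichlet series, since a subgroup of $p$-power index in $G$ is the same thing as an open subgroup of $p$-power index in the pro-$p$ completion $\hat{G}_p$, and similarly on the Lie side.

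The backbone of the bijection is the Mal'cev correspondence. Passing to $\fL(G)$, the $\exp$ and $\log$ maps of the Baker--Campbell--Hausdorff (BCH) formula identify the $\QQ$-Mal'cev completion $G\otimes\QQ$ with $(\fL(G),*)$, where $*$ is the BCH product. Since $\fL(G)$ is nilpotent of some class $c$, the BCH series is a polynomial expression with denominators involving only a fixed finite set of primes (those dividing denominators appearing in BCH up to length $c$), and the hypothesis that $\cL$ spans $\fL(G)$ over $\QQ$ forces $\cL\otimes\ZZ_p = \fL(G)\otimes\ZZ_p$ for almost all $p$ (viewing $\fL(G)$ as a lattice via any chosen basis). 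Hence, for almost all $p$, the BCH formula restricts to a well-defined, integral group operation $*$ on $\cL\otimes\ZZ_p$; the resulting topological group is canonically isomorphic to the pro-$p$ completion $\hat{G}_p$, once one observes that both can be identified with the $\ZZ_p$-points of the unipotent algebraic group underlying $\fL(G)$ with its standard $\ZZ$-structure coming from $\cL$.

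Given this identification, the key claim is that for the excluded finite set of primes $p$, a $\ZZ_p$-submodule $M$ of $\cL\otimes\ZZ_p$ is closed under the Lie bracket iff it is closed under $*$, with $M$ being an ideal iff $[M, \cL\otimes\ZZ_p]\subset M$ iff $M$ is normal in $(\cL\otimes\ZZ_p,*)$. This is immediate from the BCH expansion $x*y = x + y + \frac12[x,y] + \dotsb$: once $2, 3, \dots, c$ are invertible in $\ZZ_p$, the Lie-theoretic and group-theoretic closure conditions become visibly equivalent by induction on the lower central series, because each BCH summand is a $\QQ$-linear combination of iterated brackets which lies in $M$ as soon as $M$ is a subalgebra, and conversely the bracket can be recovered from $*$ via the polarisation $[x,y] = $ (a $\QQ$-linear combination of BCH monomials in $x,y,-x,-y$).

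The main obstacle, and where care is needed, is to pin down precisely which primes are excluded: one must simultaneously exclude primes appearing in BCH denominators up to level $c$, primes for which $\cL$ and $\fL(G)$ do not have matching $p$-adic completions, and primes at which the Mal'cev identification of $\hat{G}_p$ with $(\cL\otimes\ZZ_p,*)$ fails. Each of these is a finite set, and the statement ``for almost all $p$'' absorbs them. Everything else is bookkeeping via induction on nilpotency class, following the approach of \cite[\S 4]{GSS88}.
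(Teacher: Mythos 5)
The paper itself contains no proof of this theorem: it is imported verbatim from \cite[\S 4]{GSS88}, so the only meaningful comparison is with the classical linearisation argument given there, and your outline does follow that route (reduction to the pro-$p$ completion, identification of $\hat{G}_p$ with $(\cL\otimes\ZZ_p,*)$ via the Baker--Campbell--Hausdorff product for almost all $p$, then a correspondence between open subgroups and finite-index subalgebras).

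There is, however, a genuine gap at the pivotal step. Your ``key claim'' only asserts that a $\ZZ_p$-\emph{submodule} $M\subset\cL\otimes\ZZ_p$ is closed under the bracket if and only if it is closed under $*$. The direction from subalgebras to $*$-subgroups is indeed immediate from $p$-integrality of the BCH coefficients, but the substantive half of the correspondence is the converse applied to an \emph{arbitrary} open subgroup $H$ of $(\cL\otimes\ZZ_p,*)$: before your equivalence can be invoked, one must show that $H$ is closed under addition and under $\ZZ_p$-scalars, i.e.\ that it is a submodule at all. Nothing in the expansion $x*y=x+y+\tfrac12[x,y]+\dotsb$ gives this for free, and your polarisation remark only recovers $[x,y]$, not $x+y$; a priori there could be finite-index $*$-subgroups that are not additively closed. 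The standard repair (and what \cite{GSS88} in effect does) is to recover addition as well: either via the inverse Hausdorff formulas, which express $x+y$ and $[x,y]$ as $*$-words in powers of $x,y$ with exponents whose denominators are prime to $p$ (such powers exist in $H$ and stay in $H$ because an open subgroup of a pro-$p$ group is again pro-$p$ and admits $\ZZ_p$-powering), or via an induction along the lower central series with a good basis; $\ZZ_p$-stability then follows since $H$ is open, hence closed, and a closed additive subgroup of $\ZZ_p^d$ is a submodule. A second point you pass over silently is index preservation: the bijection is the identity on underlying subsets of $\cL\otimes\ZZ_p$, but the equality of the two zeta functions also needs that the number of $*$-cosets of $H$ equals the number of additive cosets of the same set, which requires a short argument (for instance, left $*$-translation is a polynomial bijection of $\ZZ_p^d$ with unipotent Jacobian, hence preserves the additive Haar measure, or one counts modulo $p^k(\cL\otimes\ZZ_p)$ for $k$ large). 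With these two points supplied, your sketch becomes the standard proof; as written, it does not yet yield the asserted bijection.
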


Throughout this article, we will ignore finite sets of exceptional primes.  From
our point of view, the study of local zeta functions of nilpotent groups is thus
subsumed by the cases of algebras and modules.  Passing from nilpotent groups to
$\ZZ$-algebras allows us to perform base changes to finite extensions of $\ZZ$
and $\ZZ_p$.  This entirely natural operation on the level of algebras and
modules will play a central role throughout this article.

\subsection{Local zeta functions and $\fP$-adic integration}
\label{ss:coneint}

The following notational conventions will be used at various points in this
article.
\begin{notation}
  \label{not:local}
  For a a non-Archimedean local field $K$, let $\fO$ be the valuation ring and
  $q$ be the residue field size of $K$.  Let $\fP$ be the maximal ideal of
  $\fO$.  Choose a uniformiser $\pi$ and let $\nu$ be the valuation on $K$ with
  $\nu(\pi) = 1$.  Let $\abs{\dtimes}$ be the absolute value on $K$ with
  $\abs{x} = q^{-\nu(x)}$.  For a non-empty set or family $M \subset K$, we
  write $\norm{M} = \sup(\abs{x}:x\in M)$.  Let $\mu$ be the Haar measure on $K$
  normalised such that $\mu\left(\fO\right) = 1$; we use the same symbol to
  denote the product measure on $K^n$.
\end{notation}

We often use subscripts to denote base change.  Let $\Tr_d(R)$ be the ring of
upper triangular $d\times d$-matrices over a ring $R$.
\begin{thm}[{\cite[\S 5]{dSG00}}]
  \label{thm:coneint}
  Let $k$ be a number field with ring of integers $\fo$.
  \begin{enumerate}
  \item
    \label{thm:coneint1}
    Let $\cA$ be a non-associative $\fo$-algebra which is free of rank $d$ as an
    $\fo$-module.  Then there exists a finite family $\ff = (f_i)_{i\in I}$ of
    non-zero Laurent polynomials $f_i \in \fo[X_{ij}^{\pm 1} : 1\le i \le j\le
    d]$ with the following property: if $K \supset k$ is a $p$-adic field, then
    \begin{equation}
      \label{eq:coneint}
      \zeta_{\cA_{\fO}}(s) =
      (1-q^{-1})^{-d}
      \int_{\{ \xx\in \Tr_d(\fO) : \norm{\ff(\xx)}\le 1\}}
      \abs{x_{11}}^{s-1}\dotsb \abs{x_{dd}}^{s-d} \dd\mu(\xx),
    \end{equation}
    where we identified $\Tr_d(K) \approx K^{\binom{d+1}2}$ and followed the
    conventions in Notation~\ref{not:local}.
  \item
    \label{thm:coneint2}
    Let $\cE$ be an associative $\fo$-subalgebra of $\End_{\fo}(M)$, where $M$
    is a free $\fo$-module of rank $d$.  Then there are Laurent polynomials as
    in (\ref{thm:coneint1}) (but usually different from those) such that the
    conclusion of (\ref{thm:coneint1}) holds for $\zeta_{\cE_{\fO} \acts
      M_{\fO}}(s)$ in place of $\zeta_{\cA_{\fO}}(s)$.
  \end{enumerate}
\end{thm}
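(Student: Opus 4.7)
The plan is to follow the linearisation-and-integration paradigm of \cite{GSS88,dSG00} in two main stages. In the first stage I would parametrise full-rank $\fO$-submodules $\cU$ of (the underlying module of) $\cA_\fO$, respectively of $M_\fO$, by Hermite normal forms: after fixing an $\fO$-basis $e_1,\dotsc,e_d$, each such $\cU$ corresponds uniquely to an upper triangular $M = (m_{ij}) \in \Tr_d(\fO)$ with $m_{ii} = \pi^{a_i}$ for some $a_i \ge 0$ and $m_{ij}$ (for $j > i$) a chosen representative of $\fO/\pi^{a_j}\fO$; then $\idx{\cA_\fO:\cU} = q^{a_1 + \dotsb + a_d}$. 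The closure condition ($\cU \cdot \cU \subset \cU$ in the algebra case; $\cE_\fO \cdot \cU \subset \cU$ in the submodule case) then translates into a system $\norm{\ff(\xx)}\le 1$ for a finite family $\ff$ of Laurent polynomials over $\fo$. The key observation is that a vector $v \in \cA_\fO$ lies in the row span of an upper triangular, invertible $M$ exactly when $v M^{-1} \in \fO^d$; since $M^{-1}$ is upper triangular with diagonal $(x_{11}^{-1},\dotsc,x_{dd}^{-1})$, the entries of $v M^{-1}$ are Laurent polynomials in the $x_{ij}$ whose denominators involve only the $x_{ii}$. Applying this to $v = e_i^* e_j^*$ (with $e_i^* := \sum_k m_{ik} e_k$), or to $v = \eta(e_i^*)$ for $\eta$ in a finite generating set of $\cE_\fO$, one reads off the required family $\ff \subset \fo[X_{ij}^{\pm 1}]$.

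In the second stage I would convert the Dirichlet series into the $p$-adic integral. Stratifying the integration domain by the valuations $a_i := \nu(x_{ii})$, the integrand $\abs{x_{11}}^{s-1}\dotsb \abs{x_{dd}}^{s-d}$ is constant on each stratum, and the inequality $\norm{\ff(\xx)}\le 1$ is invariant under the left action of $B := \Tr_d(\fO) \cap \mathrm{GL}_d(\fO)$ on $\xx$, since it depends only on the submodule spanned by the rows of $\xx$. Each submodule of a given type is thus a single $B$-orbit; a Jacobian computation (the Jacobian of left multiplication by a fixed representative is $\prod_i x_{ii}^i$) gives orbit measure $(1-q^{-1})^d q^{-\sum_i i a_i}$. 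Summing over strata and using $\abs{x_{11}}^{s-1}\dotsb\abs{x_{dd}}^{s-d} = q^{-\sum_i a_i(s-i)}$ on the stratum of type $a$, the integral evaluates to $(1-q^{-1})^d \zeta_{\cA_\fO}(s)$, which establishes \eqref{eq:coneint}.

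The main obstacle is the encoding of the closure condition: it must be $B$-invariant along the orbits of upper triangular bases of a fixed $\cU$, and a naive formulation using Hermite coset representatives would depend on those choices. The decisive point is that allowing inverses of the $x_{ii}$ renders ``contained in the row span'' intrinsic to $\cU$ via the formula $v M^{-1} \in \fO^d$; this is precisely why Laurent, rather than merely polynomial, expressions are essential in the statement. Once this invariance is secured, the orbit-and-measure decomposition of the second stage is a routine calculation.
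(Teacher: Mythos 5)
Your argument is correct and is essentially the proof the paper relies on: Theorem~\ref{thm:coneint} is quoted from du Sautoy--Grunewald, whose argument rests on exactly this Hermite-normal-form parametrisation and $B$-orbit measure computation going back to \cite[Prop.\ 3.1]{GSS88}, and your family $\ff$ (the entries of $vM^{-1}$, i.e.\ of $\det(C)^{-1}(C_mC_n)\adj(C)$) is precisely the recipe recorded in Remark~\ref{r:coneint}(\ref{r:coneint2}). The only cosmetic point is that the map whose Jacobian you compute is $b\mapsto bM_0$ (right multiplication by the fixed representative, acting on $b\in B$) rather than ``left multiplication by a fixed representative'', but the stated value $\prod_i \abs{x_{ii}}^i$ and the resulting orbit measure $(1-q^{-1})^d q^{-\sum_i i a_i}$ are correct.
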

\begin{rems}
  \label{r:coneint}
  \quad
  \begin{enumerate}
  \item Strictly speaking, the proof given in \cite{dSG00} (which relies on
    \cite[Prop.\ 3.1]{GSS88}) only covers the case that $\cA$ is a Lie ring over
    $\ZZ$ and $K = \QQ_p$; however, the same arguments carry over, essentially
    verbatim, to the present setting.
  \item
    \label{r:coneint2}
    For the benefit of the reader who wishes to work through the explicit
    examples in \S\ref{s:app}, we recall how to find a suitable $\ff$ in
    (\ref{thm:coneint1}), part (\ref{thm:coneint2}) being similar.  Thus, by
    choosing a basis, we may identify $\cA$ and $\fo^d$ as $\fo$-modules.  Let
    $R := \fo[X_{ij} :1\le i\le j \le d]$ and let $C:=[X_{ij}]_{i\le j} \in
    \Tr_d(R)$ with rows $C_1,\dotsc,C_d$.  Now let $\ff$ consist of all non-zero
    components of all $d$-tuples $\det(C)^{-1} (C_mC_n) \adj(C)$ for $1\le
    m,n\le d$, where the products $C_mC_n$ are taken in $\cA_R$ and $\adj(C)$
    denotes the adjugate matrix of $C$.  We note that $\ff$ in general very much
    depends on our choice of an $\fo$-basis of $\cA$.  Moreover, the description
    of $\ff$ given here is often highly redundant.  For actual computations such
    as those in \S\ref{s:app}, we tacitly apply some elementary simplifications.
  \end{enumerate}
\end{rems}

Keeping the notation of Theorem~\ref{thm:coneint}, by adapting and extending a
result due to Denef \cite[Thm\ 3.1]{Den87}, du Sautoy and Grunewald \cite[\S\S
2--3]{dSG00} (see also \S\ref{ss:topsub}) derived an explicit formula for
$\zeta_{\cA_{\fO}}(s)$ or $\zeta_{\cA_{\fO}\acts M_{\fO}}(s)$, respectively, in
terms of numerical data extracted from an embedded resolution of singularities.
As in the case of Igusa's local zeta function, this approach is primarily of
theoretical interest due to the infeasibility of constructing a resolution of
singularities in practice.  Our first major goal, to be accomplished in
Theorem~\ref{thm:evaluate}, is to find more practical means of computing
integrals such as those in Theorem~\ref{thm:coneint} under suitable
non-degeneracy assumptions.

\section{Zeta functions associated with cones and polytopes}
\label{s:polyzeta}

Let $K$ be a $p$-adic field with associated objects as in
Notation~\ref{not:local}.  For a polytope $\cP \subset \RR^n$ and $\xx\in
(K^\times)^n$, we write $\cP(\xx) = \{ \xx^\alpha : \alpha \in \cP \cap
\ZZ^n\}$, where $\xx^{\alpha} = x_1^{\alpha_1}\dotsb x_{n\phantom
  1\!\!}^{\alpha_n}$.  For $\xx\in K^n$, write $\nu(\xx) :=
(\nu(x_1),\dotsc,\nu(x_n))$.  Note that $\nu(\xx^{\omega}) = \bil {\nu(\xx)}
\omega$ for $\omega \in \ZZ^n$ and $\xx \in (K^\times)^n$, where
$\bil{\dtimes\,}{\dtimes\,}$ denotes the standard inner product.  Let
$\cC_0\subset \Orth^n$ be a half-open rational cone and let
$\cP_1,\dotsc,\cP_m\subset\Orth^n$ be lattice polytopes.  In this section, we
give an explicit convex-geometric formula (see
Proposition~\ref{prop:monomial_integral}) for the ``zeta function''
\begin{equation}
  \label{eq:cone_polytope_zeta}
  \int_{\left\{ \xx \in K^n : \nu(\xx)\in \cC_0 \right\}}
  \norm{\cP_1(\xx)}^{s_1} \dotsb \norm{\cP_m(\xx)}^{s_m} \dd\mu(\xx),
\end{equation}
where $s_1,\dotsc,s_m\in \CC$ with $\Real(s_j) \ge 0$.  Zeta functions of this
shape constitute the building blocks of our explicit formulae for more
complicated $\fP$-adic integrals in \S\ref{s:padic_eval}.  They also generalise
Igusa-type zeta functions associated with monomial ideals previously considered
in the literature.  Indeed, if $\cC_0 = \Orth^n$, $m = 1$, and $\cP_1 =
\conv(\alpha_1,\dotsc,\alpha_r)$ for $\alpha_1,\dotsc,\alpha_r\in \NN_0^n$, then
(\ref{eq:monomial_integral}) coincides with the zeta function associated with
the ideal, $I$ say, generated by $\XX^{\alpha_1},\dotsc,\XX^{\alpha_r}$ over
$\fo$ in the sense of \cite{HMY07}.  In this special case, our
Proposition~\ref{prop:monomial_integral} is analogous to \cite[Prop.\
2.1]{HMY07}, the main difference being that in \cite{HMY07}, the polytope
$\cP_1$ is replaced by the polyhedron $\conv\bigl(\alpha \in \NN_0^n: \XX^\alpha
\in I\bigr) \subset \Orth^n$; cf.\ \S\ref{ss:ndlit}.

\subsection{Background on cones and their generating functions}
\label{ss:cones}

We summarise some standard material; for details see e.g.\
\cite{Bar08,Bar02,CLS11,Zie95}.

\paragraph{Cones.}
By a \emph{cone} in $\RR^n$ we mean a polyhedral cone, i.e.\ a finite
intersection of closed linear half-spaces in $\RR^n$.  If such a collection of
half-spaces can be chosen to be defined over $\QQ$, then the cone is
\emph{rational}.  Equivalently, cones in $\RR^n$ are precisely the sets of the
form $\cone(P) := \{ \sum\limits_{\varrho \in P} \lambda(\varrho) \varrho :
\lambda(\varrho) \in \Orth \}$ for finite $P \subset \RR^n$.  A cone $\cC
\subset \RR^n$ is rational if and only if $\cC = \cone(P)$ for some finite
$P\subset \ZZ^n$.  The \emph{dual cone} $\cP^* := \{ \omega \in \RR^n :
\bil\alpha\omega \ge 0 \text{ for all } \alpha\in \cP\}$ of a polytope $\cP
\subset \RR^n$ is a cone in our sense.  A cone $\cC \subset \RR^n$ is
\emph{pointed} if it does not contain any non-trivial linear subspace of
$\RR^n$.  The following notion appears less frequently in the literature.  Thus,
a (relatively) \emph{half-open cone} in $\RR^n$ is a set of the form
$\cC\setminus(\cC_1\cup\dotsb\cup\cC_r)$, where $\cC \subset \RR^n$ is a cone
and $\cC_1,\dotsc,\cC_r$ are faces of $\cC$.  Equivalently, half-open cones are
finite intersections of closed linear half-spaces and open ones.  We say that a
half-open cone is rational if it is either empty or if its closure is a rational
cone.  A \emph{relatively open cone} is a non-empty half-open cone which
coincides with the relative interior of its closure within the ambient Euclidean
space.

\paragraph{Polytopes and fans.}
For a non-empty polytope $\cP \subset \RR^n$ and $\omega\in \RR^n$, let
$\face_\omega(\cP)$ denote the face of $\cP$ where the function $\cP \to \RR,
\alpha \mapsto \bil{\alpha}{\omega}$ attains its minimum; we do not regard
$\emptyset$ as a face of $\cP$.  If $\cQ \subset \RR^n$ is another non-empty
polytope, then $\face_\omega(\cP + \cQ) = \face_\omega(\cP) +
\face_\omega(\cQ)$.  The (inner, relatively open) \emph{normal cone} of a face
$\tau \subseteq \cP$ is $\NormalCone_\tau(\cP) = \{ \omega \in \RR^n :
\face_\omega(\cP) = \tau \}$.  We have $n = \dim(\tau) +
\dim(\NormalCone_\tau(\cP))$.  It is well-known that $ \RR^n = \bigcup_{\tau}
\NormalCone_{\tau}(\cP) $ is a partition of $\RR^n$ into relatively open cones.
The set $\{ \overline{\NormalCone_\tau(\cP)} : \tau \text{ is a face of } \cP\}$
constitutes a fan, called the (inner) \emph{normal fan} of $\cP$.

\paragraph{Visibility.}
\label{p:visibility}
The following terminology is non-standard.
\begin{defn}
  \label{d:visible}
  Let $\cC_0\subset \RR^n$ be a half-open cone and let $\cP \subset \RR^n$ be a
  non-empty polytope.  We say that a face $\tau \subset \cP$ is
  \emph{$\cC_0$-visible} if $\NormalCone_\tau(\cP) \cap \cC_0 \not= \emptyset$.
\end{defn}
\begin{lemma}
  \label{lem:visible}
  Let $\cC\subset \RR^n$ be a full-dimensional cone and let $\cP \subset \RR^n$
  be a non-empty polytope.  Write $\interior(\cC)$ for the interior of $\cC$ and
  $\cC^*$ for the dual cone of $\cC$.  Then the $\interior(\cC)$-visible faces
  of $\cP$ are precisely the non-empty compact faces of $\cP + \cC^*$.
\end{lemma}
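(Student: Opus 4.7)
The plan is to characterise both sides of the claimed equality in terms of the minimiser faces $\face_\omega(\dtimes)$ and match them up via the distributivity identity $\face_\omega(\cP + \cQ) = \face_\omega(\cP) + \face_\omega(\cQ)$ that the paper has already recorded.

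First I would unpack the definition of visibility: by the description of the normal cone given in the paper, $\tau$ is a face of $\cP$ that is $\interior(\cC)$-visible if and only if there exists some $\omega \in \interior(\cC)$ with $\face_\omega(\cP) = \tau$. So the left-hand set is exactly $\{\face_\omega(\cP) : \omega \in \interior(\cC)\}$.

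For the right-hand side I would view $\cP + \cC^*$ as a polyhedron whose recession cone is $\cC^*$. The basic theory of polyhedra says that its non-empty faces are precisely the sets $\face_\omega(\cP + \cC^*) = \{ q \in \cP + \cC^* : \langle q,\omega\rangle = \inf_{q' \in \cP+\cC^*}\langle q',\omega\rangle\}$ for those $\omega$ with finite infimum, which forces $\omega \in (\cC^*)^* = \cC$ (here I use that $\cC$ is closed and full-dimensional so that $\cC^{**} = \cC$). By the distributivity identity, such a face equals $\face_\omega(\cP) + \face_\omega(\cC^*)$. Since $\cC$ is full-dimensional, $\cC^*$ is pointed, so $\face_\omega(\cC^*)$ is a face of a pointed cone containing $0$; it is compact (hence $=\{0\}$) iff it contains no ray, iff $\omega$ lies strictly positive on $\cC^*\setminus\{0\}$, iff $\omega \in \interior(\cC^{**}) = \interior(\cC)$.

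Combining the two observations: a face of $\cP + \cC^*$ is non-empty and compact iff it has the form $\face_\omega(\cP+\cC^*)$ with $\omega \in \interior(\cC)$, in which case $\face_\omega(\cC^*) = \{0\}$ and the face equals $\face_\omega(\cP)$; conversely, for $\omega \in \interior(\cC)$ the set $\face_\omega(\cP)$ is a non-empty compact face of $\cP+\cC^*$ by the same reasoning. This identifies the two families. The only technical step that requires a bit of care is the elementary duality fact that $\face_\omega(\cC^*) = \{0\}$ exactly when $\omega \in \interior(\cC)$; everything else is a direct assembly of the properties of normal fans and Minkowski sums collected in the preceding paragraphs.
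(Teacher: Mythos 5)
Your proposal is correct and follows essentially the same route as the paper: both rest on the identity $\face_\omega(\cP + \cC^*) = \face_\omega(\cP) + \face_\omega(\cC^*)$ together with the duality fact that $\face_\omega(\cC^*) = \{0\}$ if and only if $\omega \in \interior(\cC)$ (which the paper simply cites from the literature, while you sketch it directly and are somewhat more careful about which $\omega$ actually expose faces of the unbounded polyhedron $\cP + \cC^*$).
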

\begin{proof}
  For $\omega\in \RR^n$, we have $\face_\omega(\cP + \cC^*) = \face_\omega(\cP)
  + \face_\omega(\cC^*)$ so $\face_\omega(\cP + \cC^*)$ is non-empty and compact
  if and only if $\face_\omega(\cC^*) = \{ 0\}$.  By \cite[Ex.\
  1.2.2(a)]{CLS11}, the latter condition is equivalent to $\omega\in
  \interior(\cC)$.
\end{proof}

\paragraph{Generating functions.}
Let $\lambda_1,\dotsc,\lambda_n$ be algebraically independent over $\QQ$ and
write $\bm\lambda =(\lambda_1,\dotsc,\lambda_n)$.  Let $\Torus^n =
\Spec(\ZZ[\lambda_1^{\pm 1},\dotsc,\lambda_n^{\pm 1}])$.  Unless otherwise
mentioned, by a commutative ring or algebra, we always mean an associative,
commutative, and unital ring or algebra, respectively.  Given a commutative ring
$R$, we identify $\TT^n(R) = (R^\times)^n$.

\begin{defn}
  \label{d:ConeRegion}
  For a rational cone $\cC \subset \RR^n$, let
  $$\ConeRegion(\cC) := \{ \xx\in \Torus^n(\CC) : \abs{\xx^\omega} < 1 \text{ for }
  0 \not= \omega \in \cC \cap \ZZ^n\}.$$
\end{defn}
Note that if $\cC = \cone(\varrho_1,\dotsc,\varrho_r)$ for $0\not= \varrho_i\in
\ZZ^n$, then each $\omega \in \cC \cap \ZZ^n$ is a $\QQ_{\ge 0}$-linear
combination of the $\varrho_i$ whence $\ConeRegion(\cC) = \{ \xx \in
\Torus^n(\CC) : \abs{\xx^{\varrho_i}} < 1 \text{ for } i=1,\dotsc,r\}$ follows.

The next result is well-known.
\begin{thm}[{Cf.\ \cite[Ch.\ 13]{Bar08}}]
  \label{thm:genfun}
  Let $\cC\subset \RR^n$ be a pointed rational cone and let $\cC_0 \subset \cC$
  be a half-open cone with $\overline{\cC_0} = \cC$.
  \begin{enumerate}
  \item $\ConeRegion(\cC)$ is a non-empty open subset of $\Torus^n(\CC)$.
  \item $\sum_{\omega\in {\cC_0} \cap \ZZ^n} \xx^\omega$ converges absolutely
    and compactly on $\ConeRegion(\cC)$.
  \item There exists a unique $\genfun{\cC_0}\in \QQ(\bm \lambda)$ with
    $\genfun{\cC_0}(\xx) = \sum_{\omega\in \cC_0\cap \ZZ^n} \xx^\omega$ for
    $\xx\in \ConeRegion(\cC)$.
  \end{enumerate}
\end{thm}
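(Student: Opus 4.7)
The plan is to dispatch the three claims in order, reducing (2) and (3) to the case of a simplicial half-open cone via a standard triangulation argument. For (1), I would first pick integral generators $\varrho_1,\dotsc,\varrho_r \in \ZZ^n\setminus\{0\}$ of $\cC$. The observation immediately following Definition~\ref{d:ConeRegion} then identifies $\ConeRegion(\cC)$ with $\{\xx \in \Torus^n(\CC) : \abs{\xx^{\varrho_i}} < 1 \text{ for } i=1,\dotsc,r\}$, which is open in $\Torus^n(\CC)$ as a finite intersection of open sets. For non-emptiness, I would use that, since $\cC$ is pointed, the dual cone $\cC^*$ is full-dimensional, hence its topological interior meets $\ZZ^n$; for any $\omega \in \interior(\cC^*) \cap \ZZ^n$ one has $\bil{\varrho_i}{\omega} > 0$ for all $i$, so taking any real $t \in (0,1)$ and setting $x_j := t^{\omega_j}$ yields a point $\xx \in \Torus^n(\CC)$ with $\abs{\xx^{\varrho_i}} = t^{\bil{\varrho_i}{\omega}} < 1$.

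For (2) and (3), I would reduce to the simplicial case via a Stanley-type half-open triangulation: the rational half-open cone $\cC_0$ decomposes as a finite disjoint union $\cC_0 = \bigsqcup_k \cD_k$ of rational simplicial half-open cones, each with $\overline{\cD_k} \subseteq \cC$ (see \cite[Ch.\ 13]{Bar08}). Since the decomposition is finite and disjoint, both absolute convergence on $\ConeRegion(\cC)$ and rationality of the resulting generating function reduce to the analogous statements for each $\cD_k$.

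In the simplicial case, let $\varrho_1,\dotsc,\varrho_d \in \ZZ^n$ be linearly independent integral generators of $\overline{\cD_k}$ and let $\Pi_k$ denote the corresponding half-open fundamental parallelepiped, inheriting its pattern of open and closed facets from $\cD_k$. I would then invoke the standard decomposition
\[
\cD_k \cap \ZZ^n = \bigsqcup_{\sigma \in \Pi_k \cap \ZZ^n} \bigl(\sigma + \NN_0 \varrho_1 + \dotsb + \NN_0 \varrho_d\bigr),
\]
the index set on the right being finite because $\Pi_k$ is bounded. On $\ConeRegion(\cC)$ the geometric series $\sum_{m\ge 0} \xx^{m\varrho_i} = 1/(1-\xx^{\varrho_i})$ converge absolutely, so by Fubini the contribution of $\cD_k$ to the generating function sums to the rational function $\bigl(\sum_{\sigma \in \Pi_k \cap \ZZ^n} \xx^\sigma\bigr) \prod_{i=1}^d 1/(1-\xx^{\varrho_i})$, with compact convergence on compact subsets of $\ConeRegion(\cC)$. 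Summing over $k$ yields the desired $\genfun{\cC_0} \in \QQ(\bm\lambda)$; uniqueness is automatic since $\ConeRegion(\cC)$ is a non-empty open subset of the irreducible variety $\Torus^n(\CC)$.

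The hard part will be the half-open triangulation step: while triangulating $\cC$ into closed simplicial cones is classical, some bookkeeping is required in order to open and close the facets of the pieces so that they disjointly cover $\cC_0$ and respect its half-open structure. Once this combinatorial decomposition is in place, the remaining steps are routine manipulations with absolutely convergent geometric series.
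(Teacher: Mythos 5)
Your argument is correct, but it is worth noting how it relates to what the paper actually does: the paper offers no proof of Theorem~\ref{thm:genfun} at all, citing \cite[Ch.\ 13]{Bar08} as well-known, and its only methodological hint is the remark that the half-open case reduces to the closed case $\cC_0 = \cC$ by inclusion--exclusion. Concretely, since $\cC_0 = \cC\setminus(\cC_1\cup\dotsb\cup\cC_r)$ with the $\cC_i$ faces of $\cC$, and intersections of faces are again faces, the indicator of $\cC_0$ is an alternating sum of indicators of closed pointed rational subcones of $\cC$; as $\ConeRegion(\cC)\subset\ConeRegion(\cC_i)$, parts (2) and (3) then follow directly from the classical closed-cone statement, with no new combinatorics. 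You instead handle the half-open structure intrinsically, via a Stanley-type half-open triangulation and the matching half-open fundamental parallelepipeds $\Pi_k$ --- this is the step you rightly flag as the one needing bookkeeping (opening and closing facets consistently, e.g.\ by the generic-point trick), and it is precisely the step the inclusion--exclusion route avoids; in exchange your route yields a subtraction-free formula $\sum_k\bigl(\sum_{\sigma\in\Pi_k\cap\ZZ^n}\xx^\sigma\bigr)\prod_i(1-\xx^{\varrho_i})^{-1}$ and reproves the closed case along the way rather than quoting it. Your proofs of (1) (strict positivity of $\bil{\varrho_i}{\omega}$ for $\omega$ in the interior of the full-dimensional dual cone, then $x_j = t^{\omega_j}$) and of uniqueness in (3) (a rational function vanishing on a non-empty Euclidean-open subset of the irreducible torus vanishes identically) are fine, and the convergence and Fubini manipulations on the simplicial pieces are routine as you say.
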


Using the inclusion-exclusion principle, Theorem~\ref{thm:genfun} reduces to the
case $\cC_0 = \cC$, the situation usually considered in the literature.  Let
$\QQ[\cC\cap\ZZ^n]$ denote the $\QQ$-subalgebra of $\QQ(\bm\lambda)$ spanned
(over $\QQ$) by the Laurent monomials $\bm\lambda^\omega$ for $\omega\in \cC
\cap \ZZ^n$.  Then, as is well-known, $\genfun{\cC}(\bm\lambda)$ can be written
as a finite sum of rational functions of the form
$f(\bm\lambda)/\prod_{j=1}^d(1-\bm\lambda^{\alpha_j})$, where $f(\bm\lambda)\in
\QQ[\cC\cap\ZZ^n]$, $d\le \dim(\cC)$, and $0\not= \alpha_j\in \cC\cap \ZZ^n$.
For the sake of completeness, we set $\genfun{\emptyset}(\bm\lambda) = 0$.

\subsection{Monomial substitutions}
\label{ss:monomial_sub}

Let $A\in \Mat_{n\times m}(\ZZ)$.  We also let $A$ denote the induced linear map
$\RR^n \to \RR^m$ acting by right-multiplication on row vectors.  We further
obtain an induced ring homomorphism $(\blank)^A\colon \ZZ[\lambda_1^{\pm
  1},\dotsc,\lambda_n^{\pm 1}] \to \ZZ[\xi_1^{\pm 1},\dotsc,\xi_m^{\pm 1}]$
given by $(\bm\lambda^\alpha)^A = \bm\xi^{\alpha A}$ for $\alpha \in \ZZ^n$ and
an induced morphism $A(\blank)\colon \Torus^m \to \Torus^n$ characterised by
$f(A\yy) = f^{A}(\yy)$ for $f\in \ZZ[\bm\lambda^{\pm 1}]$ and $\yy\in
\Torus^m(R)$, where $R$ is a commutative ring.  Thus, if $A_1,\dotsc,A_n$ denote
the rows of $A$, then $A\yy = (\yy^{A_1},\dotsc,\yy^{A_n})$ for
$\yy\in\Torus^m(R)$.

Let $\cC \subset \RR^n$ be a pointed rational cone.  The image $\cC A \subset
\RR^m$ of $\cC$ under $A$ is then again a pointed rational cone.  Suppose that
$\cC \cap \Ker(A) = \{ 0\}$.  Then $(\blank)^A\colon \QQ[\bm\lambda^{\pm
  1}]\to\QQ[\bm\xi^{\pm 1}]$ extends to a $\QQ$-algebra homomorphism
$(\blank)^A\colon \cB \to \cB'$, where $\cB$ is the $\QQ$-algebra generated by
$\QQ[\bm\lambda^{\pm 1}]$ and all $(1-\bm\lambda^\omega)^{-1}$ with $0\not=
\omega\in \cC\cap \ZZ^n$ and $\cB'$ is generated by $\QQ[\bm\xi^{\pm 1}]$ and
all $(1-\bm\xi^{\omega A})^{-1}$, again for $0\not= \omega\in \cC\cap \ZZ^n$.
In particular, $\genfun{\cC}^A = \genfun{\cC}(\bm\xi^{A_1},\dotsc,\bm\xi^{A_n})$
is a well-defined element of $\cB' \subset \QQ(\bm\xi)$.

\begin{lemma}
  \label{lem:sub}
  Let $\cC \subset \RR^n$ be a pointed rational cone and let $\cC_0\subset \cC$
  be a half-open cone with $\overline{\cC_0} = \cC$.  Let $A\in \Mat_{n\times
    m}(\ZZ)$ with $\cC \cap \Ker(A) = \{ 0\}$.  Then $\sum_{\omega\in
    {\cC_0}\cap \ZZ^n} \yy^{\omega A}$ converges absolutely and compactly on
  $\ConeRegion(\cC A)$ (see Definition~\ref{d:ConeRegion}).  The resulting
  function $\ConeRegion(\cC A)\to\CC$ is given by $\yy \mapsto
  \genfun{\cC_0}^A(\yy)$.
\end{lemma}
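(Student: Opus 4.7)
The plan is to reduce the statement to Theorem~\ref{thm:genfun} applied to $(\cC,\cC_0)$ and then transport the conclusion through the monomial substitution $A(\blank)\colon \Torus^m(\CC)\to\Torus^n(\CC)$, using the defining identity $(A\yy)^\omega = \yy^{\omega A}$ for $\omega\in\ZZ^n$. First I verify that $\cC A$ is a pointed rational cone, so that $\ConeRegion(\cC A)$ is well-behaved. Rationality is immediate from $A\in \Mat_{n\times m}(\ZZ)$. For pointedness, if $v = \omega_1 A$ and $-v = \omega_2 A$ with $\omega_i\in \cC$, then $\omega_1+\omega_2\in \cC\cap\Ker(A) = \{0\}$, so $\omega_2 = -\omega_1$, and the pointedness of $\cC$ forces $\omega_1 = 0$ and $v = 0$.

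Next I show that $A(\blank)$ maps $\ConeRegion(\cC A)$ into $\ConeRegion(\cC)$: for any $0\not=\omega\in \cC\cap \ZZ^n$, the hypothesis $\cC\cap\Ker(A)=\{0\}$ gives $0\not=\omega A\in \cC A\cap\ZZ^m$, whence $\abs{(A\yy)^\omega} = \abs{\yy^{\omega A}}<1$ for every $\yy\in\ConeRegion(\cC A)$. Theorem~\ref{thm:genfun} applied to $(\cC,\cC_0)$ asserts that $\sum_{\omega\in \cC_0\cap\ZZ^n}\xx^\omega$ converges absolutely and compactly on $\ConeRegion(\cC)$ to $\genfun{\cC_0}(\xx)$. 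Precomposing with $A(\blank)$ and invoking $(A\yy)^\omega = \yy^{\omega A}$ yields absolute and compact convergence of $\sum_{\omega\in \cC_0\cap\ZZ^n}\yy^{\omega A}$ on $\ConeRegion(\cC A)$---compact subsets have compact images under the continuous map $A(\blank)$, and uniform convergence transports accordingly---with limit $\genfun{\cC_0}(A\yy)$.

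It remains to identify $\genfun{\cC_0}(A\yy)$ with $\genfun{\cC_0}^A(\yy)$. By the discussion preceding the lemma, $\genfun{\cC_0}\in \cB$, and the hypothesis $\cC\cap\Ker(A)=\{0\}$ is precisely what makes the substitution homomorphism $(\blank)^A\colon \cB\to\cB'$ well-defined: each denominator $1-\bm\lambda^\omega$ with $0\not=\omega\in\cC\cap\ZZ^n$ is sent to a non-zero element $1-\bm\xi^{\omega A}\in\cB'$, which is moreover non-vanishing on $\ConeRegion(\cC A)$. Functoriality of substitution then gives $\genfun{\cC_0}^A(\yy)=\genfun{\cC_0}(A\yy)$. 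I anticipate no essential obstacle: the argument amounts to recognising that $\cC\cap\Ker(A)=\{0\}$ is the common compatibility condition ensuring both that $A(\blank)$ carries convergence regions appropriately and that the algebraic substitution $(\blank)^A$ extends to the required localisation of $\QQ[\bm\lambda^{\pm 1}]$.
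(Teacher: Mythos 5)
Your proof is correct and follows essentially the same route as the paper: observe that $A(\blank)$ maps $\ConeRegion(\cC A)$ into $\ConeRegion(\cC)$ (you check this directly from Definition~\ref{d:ConeRegion} using $\cC\cap\Ker(A)=\{0\}$, the paper via generators), then pull back Theorem~\ref{thm:genfun} through the identity $(A\yy)^\omega=\yy^{\omega A}$ and identify the limit with $\genfun{\cC_0}^A$ via the substitution homomorphism $(\blank)^A\colon\cB\to\cB'$. Your additional verifications (pointedness of $\cC A$, non-vanishing of the denominators on $\ConeRegion(\cC A)$) are welcome detail but do not change the argument.
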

\begin{proof}
  If $\cC = \cone(\varrho_1,\dotsc,\varrho_r)$ for $0\not= \varrho_i \in \ZZ^n$,
  then $\cC A = \cone(\varrho_1 A, \dotsc, \varrho_r A)$ and the $\varrho_i A$
  are non-zero too.  As $\yy^{\omega A} = (A\yy)^\omega$ for $\yy\in
  \Torus^m(\CC)$ and $\omega\in \ZZ^n$, we conclude that $A(\blank)\colon
  \Torus^m(\CC) \to \Torus^n(\CC)$ maps $\ConeRegion(\cC A)$ to a subset of
  $\ConeRegion(\cC)$.  Now apply Theorem~\ref{thm:genfun}.
\end{proof}

\subsection{Rational function functions from cones and polytopes}
\label{ss:zeta_cone_polytopes}

Let $\cC_0 \subset \Orth^n$ be a half-open rational cone with closure $\cC =
\overline{\cC_0}$ and let $\cP_1,\dotsc,\cP_m \subset \Orth^n$ be non-empty
lattice polytopes.  Let $\xi_0,\dotsc,\xi_m$ be algebraically independent over
$\QQ$.  We now construct a rational function
$\cZ^{\cC_0,\cP_1,\dotsc,\cP_m}(\xi_0,\dotsc,\xi_m)$, which, as we will see in
Proposition~\ref{prop:monomial_integral}, essentially specialises to
\eqref{eq:cone_polytope_zeta}.

For $\cC_0 = \emptyset$, define
$\cZ^{\cC_0,\cP_1,\dotsc,\cP_m}(\xi_0,\dotsc,\xi_m) := 0$. Henceforth, let
$\cC_0 \not=\emptyset$.  Define $\cP := \cP_1 + \dotsb + \cP_m$ to be the
Minkowski sum of $\cP_1,\dotsc,\cP_m$.  It is well-known that the normal fan of
$\cP$ is precisely the coarsest common refinement of the normal fans of
$\cP_1,\dotsc,\cP_m$ (see \cite[Prop.\ 7.12]{Zie95}).  Specifically, each face
$\tau$ of $\cP$ admits a unique decomposition $\tau = \tau_1 + \dotsb + \tau_m$
for suitable faces $\tau_j \subset \cP_j$ (see e.g.\ \cite[Prop.\ 2.1]{Fuk04})
and
\begin{equation}
  \label{eq:intersection_normal_cones}
  \NormalCone_{\tau}(\cP) = \NormalCone_{\tau_1}(\cP_1) \cap \dotsb \cap \NormalCone_{\tau_m}(\cP_m).
\end{equation}

Write $\one = (1,\dotsc,1)$.  For a vertex $\vv$ of $\tau$, decomposed as $\vv =
\vv_1 + \dotsb + \vv_m$ for vertices $\vv_j$ of $\tau_j$, let $A(\vv) =
[\one^\top, \vv_1^\top, \dotsc, \vv_m^\top] \in \Mat_{n\times(m+1)}(\ZZ)$.  If
$\omega\in \NormalCone_{\tau}(\cP)$, then \eqref{eq:intersection_normal_cones}
shows that $\omega A(\vv) = \omega A(\vv')$ for all vertices $\vv,\vv'\in\tau$;
by continuity, this identity extends to the closure of
${\NormalCone_{\tau}(\cP)}$.  Recall the notation for monomial substitutions
from \S\ref{ss:monomial_sub}.  In view of the remarks following
Theorem~\ref{thm:genfun}, and since $\cC_0 \subset \Orth^n$, we obtain a
well-defined rational function
$\cZ^{\cC_0,\cP_1,\dotsc,\cP_m}_\tau(\xi_0,\dotsc,\xi_m) := \genfun{\cC_0\cap
  \NormalCone_\tau(\cP)}^{A(\vv)}(\xi_0,\dotsc,\xi_m)$ which does not depend on
the choice of a vertex $\vv$ of $\tau$.

\begin{defn}
  \label{def:Z}
  $\cZ_{\phantom\tau}^{\cC_0,\cP_1,\dotsc,\cP_m}(\xi_0,\dotsc,\xi_m) :=
  \sum_{\tau} \cZ^{\cC_0,\cP_1,\dotsc,\cP_m}_\tau(\xi_0,\dotsc,\xi_m)$, the sum
  being taken over the $\cC_0$-visible faces $\tau$ of $\cP$ (see
  Definition~\ref{d:visible}).
\end{defn}

\begin{rem}
  \label{r:Zdenom}
  By construction, $\cZ^{\cC_0,\cP_1,\dotsc,\cP_m}(\xi_0,\dotsc,\xi_m)$ can be
  written over a denominator of the form $\prod_{i=1}^r (1-\xi_0^{a_{i0}}\dotsb
  \xi_m^{a_{im}})$ for $a_{ij}\in \NN_0$ and $a_{i0} > 0$.
\end{rem}

\begin{prop}
  \label{prop:Zseries}
  Let $$U = \{ \zz \!=\! (z_0,\dotsc,z_m) \in \CC^{m+1} : 0 < \abs{z_0} < 1
  \text{ and } 0 < \abs{z_1},\dotsc,\abs{z_m} \le 1 \}.$$ Then, for $\zz\in U$,
  \begin{equation}
    \label{eq:polycone}
    \cZ^{\cC_0,\cP_1,\dotsc,\cP_m}(\zz)
    =
    \sum_{\omega \in \cC_0 \cap \ZZ^n}
    z_0^{\bil\one\omega}
    \prod_{j=1}^m
    z_j^{\min\left( \bil\alpha\omega : \alpha\in \cP_j\right)},
  \end{equation}
  and the series on the right-hand side converges absolutely and compactly on
  $U$.
\end{prop}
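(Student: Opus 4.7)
The plan is threefold: first establish absolute and compact convergence of the series in~\eqref{eq:polycone} on $U$; then partition the sum over $\cC_0 \cap \ZZ^n$ according to the normal fan of $\cP = \cP_1 + \dotsb + \cP_m$; and finally apply Lemma~\ref{lem:sub} to recognise each piece as the corresponding $\cZ_\tau^{\cC_0,\cP_1,\dotsc,\cP_m}(\zz)$.

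For convergence, since $\cC_0 \subset \Orth^n$ and $\cP_j \subset \Orth^n$, every $\omega \in \cC_0 \cap \ZZ^n$ satisfies $\bil\one\omega \ge 0$ and $\min_{\alpha\in \cP_j}\bil\alpha\omega \ge 0$. For $\zz \in U$ each factor $|z_j|^{\min_\alpha \bil\alpha\omega}$ with $j \ge 1$ is therefore at most $1$, so every term is dominated by $|z_0|^{\bil\one\omega}$. On any compact subset $K \subset U$ one has $|z_0| \le r$ for some $r < 1$, and the majorant $\sum_{\omega \in \NN_0^n} r^{\omega_1 + \dotsb + \omega_n} = (1-r)^{-n}$ furnishes absolute and uniform convergence on $K$.

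For the identification, given a face $\tau = \tau_1 + \dotsb + \tau_m$ of $\cP$ and any vertex $\vv = \vv_1 + \dotsb + \vv_m$ of $\tau$, equation~\eqref{eq:intersection_normal_cones} combined with $\tau_j = \face_\omega(\cP_j)$ shows that $\min_{\alpha \in \cP_j}\bil\alpha\omega = \bil{\vv_j}\omega$ for all $\omega \in \NormalCone_\tau(\cP)$. Since the relatively open normal cones of the faces of $\cP$ partition $\RR^n$ and only $\cC_0$-visible faces contribute, the right-hand side of~\eqref{eq:polycone} equals
$$\sum_{\tau}\ \sum_{\omega \in \cC_0 \cap \NormalCone_\tau(\cP) \cap \ZZ^n} \zz^{\omega A(\vv)},$$
using that the columns of $A(\vv) = [\one^\top, \vv_1^\top, \dotsc, \vv_m^\top]$ are chosen so that $\zz^{\omega A(\vv)} = z_0^{\bil\one\omega}\prod_{j=1}^m z_j^{\bil{\vv_j}\omega}$.

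It now remains to apply Lemma~\ref{lem:sub} to the pointed rational cone $\cC_\tau := \overline{\cC_0 \cap \NormalCone_\tau(\cP)}$ and to $A(\vv)$. Three hypotheses must be checked: $\cC_\tau$ is pointed (immediate from $\cC_\tau \subset \Orth^n$); $\cC_\tau \cap \Ker(A(\vv)) = \{0\}$ (because $\bil\omega\one > 0$ for every nonzero $\omega \in \Orth^n$, while $\one^\top$ is the first column of $A(\vv)$); and $U \subset \ConeRegion(\cC_\tau A(\vv))$. This last inclusion is the only real obstacle, though still mild: any nonzero lattice element $\omega'$ of $\cC_\tau A(\vv) \cap \ZZ^{m+1}$ has nonnegative coordinates with first coordinate $\ge 1$ (otherwise the preimage in $\cC_\tau$ would be annihilated by $\one^\top$ and hence vanish), so $|\zz^{\omega'}| \le |z_0| < 1$ on $U$. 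The lemma then identifies the inner series with $\genfun{\cC_0 \cap \NormalCone_\tau(\cP)}^{A(\vv)}(\zz) = \cZ^{\cC_0,\cP_1,\dotsc,\cP_m}_\tau(\zz)$, and summing over the $\cC_0$-visible faces $\tau$ yields the claim.
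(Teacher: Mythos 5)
Your proposal is correct and follows essentially the same route as the paper: partition $\cC_0\cap\ZZ^n$ along the (relatively open) normal cones of $\cP_1+\dotsb+\cP_m$, identify $\min_{\alpha\in\cP_j}\bil\alpha\omega$ with $\bil{\vv_j}\omega$ on each piece, and apply Lemma~\ref{lem:sub} to $\overline{\cC_0^\tau}$ and $A(\vv)$ after checking $U\subset\ConeRegion(\overline{\cC_0^\tau}A(\vv))$. Your direct majorant for compact convergence and your lattice-point verification of the $\ConeRegion$ inclusion are minor variants of the paper's observation that the image cone lies in the cone spanned by the rows of $A(\vv)$, whose $\ConeRegion$ contains $U$ because $\vv_j\in\Orth^n$.
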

\begin{proof}
  Fix a $\cC_0$-visible face $\tau$ of $\cP_1+\dotsb+\cP_r$ and a vertex $\vv =
  \vv_1 + \dotsb + \vv_m$ of $\tau$.  Let $\cC_0^\tau := \cC_0 \cap
  \NormalCone_{\tau}(\cP)$.
  Since $\overline{\cC_0^\tau}A(\vv)$ is contained in the cone, $\cC'$ say,
  spanned by the rows of $A(\vv)$, we have
  $\ConeRegion(\overline{\cC_0^\tau}A(\vv)) \supset \ConeRegion(\cC')$ (see
  Definition~\ref{d:ConeRegion}).  Moreover, $\ConeRegion(\cC') \supset U$ since
  $\vv_j \in \Orth^n$ for $j=1,\dotsc,m$.
  By Lemma~\ref{lem:sub}, $\cZ^{\cC_0,\cP_1,\dotsc,\cP_m}_\tau(\zz) =\!\!
  \sum_{\omega \in \cC_0^\tau \cap \ZZ^n} \! z_0^{\bil\one\omega}
  \!\prod_{j=1}^m z_j^{\bil{\vv_j}\omega}$ for $\zz\in U$, the convergence being
  as stated.  The claim follows since $\cC_0 = \bigcup_{\tau} \cC_0^\tau$
  (disjoint) and $\bil{\vv_j}\omega = \min(\bil\alpha\omega \!:\! \alpha\in
  \cP_j)$ for $\omega \in \cC_0^\tau$.
\end{proof}

\subsection{Computing \texorpdfstring{$\fP$}{P}-adic integrals associated with cones and polytopes}
Let $K$ be a $p$-adic field with associated objects as in
Notation~\ref{not:local}.

\begin{prop}
  \label{prop:monomial_integral}
  Let $\cC_0\subset \Orth^n$ be a half-open rational cone and let
  $\cP_1,\dotsc,\cP_m \subset \Orth^n$ be non-empty lattice polytopes.  Let
  $\cZ^{\cC_0,\cP_1,\dotsc,\cP_m}(\xi_0,\dotsc,\xi_m)$ be as in
  Definition~\ref{def:Z}.
  Then
  \begin{equation}
    \label{eq:monomial_integral}
    \int_{\{\xx \in K^n : \nu(\xx)\in \cC_0 \}}
    \!
    \prod_{j=1}^m \norm{\cP_j(\xx)}^{s_j}
    \!\dd\mu(\xx)
    =
    (1-q^{-1})^n \dtimes
    \cZ^{\cC_0,\cP_1,\dotsc,\cP_m}(q^{-1},q^{-s_1},\dotsc,q^{-s_m})
  \end{equation}
  for $s_1,\dotsc,s_m \in \CC$ with $\Real(s_j) \ge 0$.
\end{prop}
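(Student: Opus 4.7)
The plan is to slice the integration domain according to the valuation vector $\omega = \nu(\xx)$ and match the resulting series term-by-term with the one in Proposition~\ref{prop:Zseries}.

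First I would observe that since $\cC_0\subset \Orth^n$ consists of vectors with finite non-negative entries, the integration domain decomposes as the disjoint union
\[
\{\xx\in K^n : \nu(\xx)\in \cC_0\}
=
\bigsqcup_{\omega\in \cC_0\cap \ZZ^n} \{\xx\in K^n : \nu(\xx) = \omega\},
\]
and each set on the right lies in $(K^\times)^n$. The Haar-measure computation $\mu\{x\in K:\nu(x) = c\} = (1-q^{-1})q^{-c}$ for $c\in \NN_0$, applied coordinatewise, gives $\mu\{\xx\in K^n:\nu(\xx) = \omega\} = (1-q^{-1})^n q^{-\bil\one\omega}$.

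Next I would show that the integrand is constant on each stratum. For a lattice polytope $\cP_j\subset \Orth^n$ and $\omega\in \Orth^n$, the minimum of the linear functional $\alpha\mapsto \bil{\alpha}{\omega}$ over $\cP_j$ is attained at a vertex, which is a lattice point; hence
\[
\min\bigl(\bil{\alpha}{\omega} : \alpha\in \cP_j\cap\ZZ^n\bigr)
=
\min\bigl(\bil{\alpha}{\omega} : \alpha\in \cP_j\bigr) =: m_j(\omega).
\]
Using $\nu(\xx^\alpha) = \bil{\nu(\xx)}{\alpha}$ and the fact that $\xx^\alpha$ is a single monomial (no cancellation to worry about in the ultrametric), I get $\norm{\cP_j(\xx)} = q^{-m_j(\omega)}$ for every $\xx$ with $\nu(\xx)=\omega$. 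Therefore
\[
\int_{\{\xx\in K^n:\nu(\xx)\in \cC_0\}}
\!\prod_{j=1}^m \norm{\cP_j(\xx)}^{s_j}\dd\mu(\xx)
=
(1-q^{-1})^n
\sum_{\omega\in \cC_0\cap \ZZ^n}
q^{-\bil\one\omega}\prod_{j=1}^m \bigl(q^{-s_j}\bigr)^{m_j(\omega)}.
\]

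Finally, I would verify the convergence hypotheses required to apply Proposition~\ref{prop:Zseries} and conclude. With $z_0 = q^{-1}$ and $z_j = q^{-s_j}$, the hypothesis $\Real(s_j)\ge 0$ yields $|z_j| = q^{-\Real(s_j)}\le 1$ and $|z_0| = q^{-1}<1$, so $(z_0,\dotsc,z_m)\in U$. Proposition~\ref{prop:Zseries} then identifies the series above with $\cZ^{\cC_0,\cP_1,\dotsc,\cP_m}(q^{-1},q^{-s_1},\dotsc,q^{-s_m})$, establishing \eqref{eq:monomial_integral}. Absolute convergence of the series (guaranteed by being in $U$) also legitimises the termwise integration implicitly used above, via monotone/dominated convergence applied to the non-negative integrands indexed by $\omega$. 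The only genuinely delicate point is the lattice-polytope identity $\min_{\cP_j\cap \ZZ^n}=\min_{\cP_j}$ together with the ultrametric evaluation of $\norm{\cP_j(\xx)}$; everything else is bookkeeping matching the integral's stratification to the series in Proposition~\ref{prop:Zseries}.
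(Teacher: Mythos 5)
Your proof is correct and follows essentially the same route as the paper: stratify the domain by the valuation vector $\omega\in\cC_0\cap\ZZ^n$, note the integrand is constant on each stratum, compute the stratum measure as $(1-q^{-1})^n q^{-\bil\one\omega}$, and conclude with Proposition~\ref{prop:Zseries}. The only (immaterial) difference is that the paper obtains the stratum measure via the change of variables $\uu\mapsto\pi^\omega\uu$ from $\Torus^n(\fO)$, whereas you compute it coordinatewise directly.
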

\begin{proof}
  For $\omega \in \ZZ^n$, write $\Torus^n_\omega(K) := \{ \xx\in
  \Torus^n_{\phantom o}(K) : \nu(\xx) = \omega\}$.  If $\xx\in
  \Torus^n_\omega(K)$, then $\norm{\cP_j(\xx)} =
  q^{-\min\bigl(\bil\alpha\omega:\alpha\in \cP_j\bigr)}$.
  Note that $F_\omega\colon \Torus^n(\fO) \xto{\approx} \Torus^n_\omega(K), \uu
  \mapsto \pi^\omega \uu := (\pi^{\omega_1} u_1,\dotsc,\pi^{\omega_n} u_n) $
  satisfies $\abs{\det\!\left( F_\omega'(\uu)\right)} = q^{-\bil\one\omega}$ for
  all $\uu\in \Torus^n(\fO)$.  Since $\mu(\Torus^n(\fO)) = (1-q^{-1})^n$, using
  the $\fP$-adic change of variables formula \cite[Prop.\ 7.4.1]{Igu00}, we
  obtain
  \[
  \int_{\{\xx \in K^n : \nu(\xx)\in \cC_0 \}} \prod_{j=1}^m
  \norm{\cP_j(\xx)}^{s_j}\dd\mu(\xx) = (1-q^{-1})^n \sum\limits_{\omega\in \cC_0
    \cap \ZZ^n} q^{-\bil\one\omega - \sum_{j=1}^m s_j \min\bigl(\bil\alpha\omega
    : \alpha\in\cP_j\bigr)}
  \]
  whence the claim follows from Proposition~\ref{prop:Zseries}.
\end{proof}

\begin{rem}
  Note that Proposition~\ref{prop:monomial_integral} behaves well under base
  change. Namely, replacing $K$ by a finite extension $K'\supset K$ simply
  amounts to replacing $q$ by $q^f$ on the right-hand side of
  \eqref{eq:monomial_integral}, where $f$ is the residue class degree of $K'/K$.
\end{rem}

\section{Non-degeneracy I: computing \texorpdfstring{$\fP$}{P}-adic integrals}
\label{s:padic_eval}

In this section, we study a class of $\fP$-adic integrals which includes
Igusa-type zeta functions associated with polynomial mappings (see \cite{VZG08})
as well as the integrals in Theorem~\ref{thm:coneint}.  In
Theorem~\ref{thm:evaluate}, we give an explicit formula for the integrals
considered under suitable non-degeneracy assumptions.  In addition to being
applicable to the study of zeta functions of groups, algebras, and modules, our
findings generalise various existing applications of non-degeneracy in the
context of Igusa's local zeta function.

\subsection{Newton polytopes and initial forms}
\label{ss:newton}

The following is mostly folklore.  Let $R$ be a commutative ring and let $\XX =
(X_1,\dotsc,X_n)$ be indeterminates over $R$.  Let $f = \sum_{\alpha\in \ZZ^n}
c_\alpha \XX^\alpha \in R[\XX^{\pm 1}]$, where $c_\alpha\in R$, almost all of
which are zero.  The \emph{support} of $f$ is $\supp(f) := \{ \alpha \in \ZZ^n :
c_\alpha \not= 0\}$.  The \emph{Newton polytope} $\Newton(f)$ of $f$ is the
convex hull of $\supp(f)$ within $\RR^n$.  If $R$ is a domain, then $\Newton(fg)
= \Newton(f) + \Newton(g)$ for $f,g\in R[\XX^{\pm 1}]$ (see
\cite[Lemma~2.2]{Stu96}).  For $\omega \in \RR^n$, define the \emph{initial
  form} $\init_\omega(f)$ of $f$ in the direction $\omega$ to be the sum of all
those monomials $c_\alpha \XX^\alpha$ (with $\alpha \in \supp(f)$) where $\bil
\alpha \omega$ attains its minimum.  Let $f\not= 0$.  We then have
$\face_\omega(\Newton(f)) = \Newton(\init_\omega(f))$ for $\omega\in \RR^n$
\cite[(2.5)]{Stu96}.  The equivalence classes of $\sim$ defined on $\RR^n$ via
$\omega \sim \omega'$ if and only if $\init_{\omega}(f) = \init_{\omega'}(f)$
are precisely the normal cones of the faces of $\Newton(f)$ as defined in
\S\ref{ss:cones}.

Let $Y$ be another variable over $R$.  For $\omega \in \ZZ^n$, we write
$Y^\omega \XX = (Y^{\omega_1} X_1, \dotsc, Y^{\omega_n} X_n)$.  Let $f\in
R[\XX^{\pm 1}]$ be non-zero and let $\tau$ be a face of $\Newton(f)$.  Write $f
= \sum_{\alpha\in \ZZ^n} c_\alpha \XX^\alpha$ as above and let $\omega \in
\NormalCone_\tau(\Newton(f)) \cap \ZZ^n$.  Choose a vertex $\Lambda(\tau)$ of
$\tau$.  We may write
\begin{equation*}
  f(Y^\omega \XX) =
  Y^{\bil{\Lambda(\tau)}\omega} \dtimes
  \Bigl(
  \init_\omega(f) +
  Y \dtimes {\Bigl(
    \sum_{\alpha \text{ s.t.\ }\! \bil{\alpha-\Lambda(\tau)}\omega > 0}  
    Y^{\bil{\alpha-\Lambda(\tau)}\omega -1}
    c_\alpha \XX^\alpha
    \Bigr)}
  \Bigr).
\end{equation*}
Note that $\bil{\Lambda(\tau)} \omega$ is independent of the choice of
$\Lambda(\tau)$.
\begin{notation}
  \label{not:fomega}
  $f_\omega(\XX,Y) := Y^{-\bil{\Lambda(\tau)}\omega} \dtimes f(Y^\omega \XX) \in
  R[\XX^{\pm 1}, Y]$.
\end{notation}

\subsection{Non-degeneracy}
\label{ss:nd}

Let $k$ be a field and let $X_1,\dotsc,X_n$ be algebraically independent over
$k$.  Write $\XX = (X_1,\dotsc,X_n)$.  Let $\ff = (f_i)_{i\in I} $ be a finite
family of non-zero elements of $k[\XX^{\pm 1}]$ and let $\cC_0 \subset \RR^n$ be
a half-open rational cone.  Let $\cN := \Newton\bigl(\prod_{i\in I}f_i\bigr)
=\sum_{i\in I}\Newton(f_i)$.  For a face $\tau \subset \cN$, let $\cC_0^\tau :=
\cC_0^{\phantom 1} \cap \NormalCone_{\tau}(\cN)$.  By
\eqref{eq:intersection_normal_cones} and \S\ref{ss:newton}, we may unambiguously
define $f_i^\tau := \init_{\omega}(f_i)$ for $i\in I$ and an arbitrary $\omega
\in \NormalCone_\tau(\cN)$.  Let $\bar k$ be an algebraic closure of $k$.

\begin{defns}
  \label{d:nd}
  \quad
  \begin{enumerate}
  \item
    \label{d:nd1}
    We say that $\ff$ is \emph{non-degenerate relative to $\cC_0$} if the
    following holds:

    for all $\cC_0$-visible faces $\tau \subset \cN$ and all $J\subset I$, if
    $\uu\in \Torus^n(\bar k)$ satisfies $f_j^\tau(\uu) = 0$ for all $j\in J$,
    then the Jacobian matrix $\begin{bmatrix}\frac{\partial
        f_j^\tau\!(\uu)}{\partial X_i}\end{bmatrix}_{i=1,\dotsc,n; j\in J}$ has
    rank $\noof J$.
  \item
    \label{d:nd2}
    We say that $\ff$ is \emph{globally non-degenerate} if it is non-degenerate
    relative to $\RR^n$.
  \end{enumerate}
\end{defns}

\begin{rems}
  \label{r:nd}
  \quad
  \begin{enumerate}
  \item Non-degeneracy of $\ff$ relative to $\cC_0$ is preserved under extension
    of $k$.
  \item
    \label{r:nd2}
    Let $f\in k[\XX^{\pm 1}]$ and write $f' = \left(\frac{\partial f}{\partial
        X_1},\dotsc,\frac{\partial f}{\partial X_n}\right)$.  Then $(\XX^\gamma
    f)' \equiv \XX^\gamma \dtimes f'\bmod f$ for $\gamma \in \ZZ^n$, the
    congruence being understood componentwise and within $k[\XX^{\pm 1}]$.
    In particular, whether $\ff$ is non-degenerate relative to $\cC_0$ or not is
    invariant under rescaling of the elements of $\ff$ by Laurent monomials.
  \item
    \label{r:nd3}
    Let $k$ be a number field with ring of integers $\fo$.  By the (weak)
    Nullstellensatz, if $\ff$ is non-degenerate relative to $\cC_0$ over $k$,
    then, for almost all primes $\fp$ of $k$, the reduction of $\ff$ modulo
    $\fp$ (that is, the image of $\ff$ under the natural map $\fo_{\fp}[\XX^{\pm
      1}] \to (\fo/\fp)[\XX^{\pm 1}]$) is non-degenerate relative to $\cC_0$
    over $\fo/\fp$.
  \item
    \label{r:nd4}
    If all polynomials in $\ff$ are homogeneous and $(1,\dotsc,1)$ is an
    interior point of $\cC_0$ (which thus needs to be full-dimensional), then
    every face of $\cN$ is $\cC_0$-visible whence $\ff$ is non-degenerate
    relative to $\cC_0$ if and only if it is globally non-degenerate.
  \end{enumerate}
\end{rems}

\begin{rem}
  \label{r:famset}
  In most cases below, we will consider sets instead of families of
  polynomials. When we speak of non-degeneracy of a finite set $\ff \subset
  k[\XX^{\pm 1}]$ (with $0\not\in \ff$), we refer to properties of the family
  $(f)_{f\in \ff}$.
\end{rem}

Khovanskii \cite{Kho77,Kho78}, Kushnirenko \cite{Kus76}, Varchenko \cite{Var76},
and others investigated complex varieties defined by suitably non-degenerate
systems of polynomials (see e.g.\ Theorem~\ref{thm:bkk}).  We may rephrase
Khovanskii's notion of non-degeneracy \cite[\S 2]{Kho77} as follows.

\begin{defn}
  \label{d:capdegen}
  We say $\ff = (f_i)_{i\in I}$ is \emph{\Classically-non-degenerate} if the
  following holds: for every face $\tau \subset \cN$ and $\uu\in \Torus^n(\bar
  k)$, if $f_i^\tau(\uu) = 0$ for all $i\in I$, then the Jacobian matrix
  $\begin{bmatrix}\frac{\partial f_j^\tau\!(\uu)}{\partial
      X_i}\end{bmatrix}_{i=1,\dotsc,n;j\in I}$ has rank $\noof I$.
\end{defn}
Hence, using the conventions from Remark~\ref{r:famset}, a finite set $\ff
\subset k[\XX^{\pm 1}]$ (with $0\not\in \ff$) is globally non-degenerate in our
sense if and only if each subset $\bm g \subset \ff$ is
\Classically-non-degenerate.

\subsection{Computing non-degenerate $\fP$-adic integrals}
\label{ss:eval}

Let $k$ be a number field with ring of integers $\fo$.  As before, write $\XX =
(X_1,\dotsc,X_n)$.  Let $\cC_0 \subset \Orth^n$ be a half-open rational cone and
let $\ff_0 \subset k[\XX^{\pm 1}]$ and $\ff_1,\dotsc,\ff_m \subset k[\XX]$ be
non-empty finite sets of non-zero (Laurent) polynomials.

\begin{defn}
  \label{d:Zeta}
  For a $p$-adic field $K \supset k$ with associated data as in
  Notation~\ref{not:local}, let
  \begin{equation*}
    \Zeta^{\cC_0, \ff_0,\dotsc,\ff_m}_K(s_1,\dotsc,s_m)
    :=
    \!\!\!\!
    \int_{\{\xx\in \Torus^n(K): \nu(\xx)\in \cC_0, \norm{\ff_0(\xx)} \le 1\}}
    \!\!\!\!
    \!\!\!\!
    \!\!\!\!
    \!\!\!\!
    \norm{\ff_1(\xx)}^{s_1} \dotsb \norm{\ff_m(\xx)}^{s_m} \dd\mu(\xx),
  \end{equation*}
  where $s_1,\dotsc,s_m\in \CC$ with $\Real(s_j)\ge 0$.
\end{defn}

In \cite{dSG00}, integrals of the form \eqref{eq:coneint} were studied within
the framework of {cone integrals} introduced there.  As we will explain in
Remark~\ref{r:eval_and_algmod},
$\Zeta^{\cC_0,\ff_0,\dotsc,\ff_m}_K(s_1,\dotsc,s_m)$ specialises to the integral
in \eqref{eq:coneint}.  (More generally, every cone integral with monomial
left-hand sides of divisibility conditions arises as a specialisation of an
integral $\Zeta^{\cC_0, \ff_0,\dotsc,\ff_m}_K(s_1,\dotsc,s_m)$.)  Furthermore,
taking $\cC_0 = \Orth^n$, $m = 1$, and $\ff_0 = \{ 1\}$ in
Definition~\ref{d:Zeta}, we recover the Igusa-type integral $\int_{\fO^n}
\norm{\ff_1(\xx)}^s\dd\mu(\xx)$ considered in \cite{VZG08}.

Under the assumption that the set $\ff := \ff_0 \cup \dotsb \cup\ff_m$ in
Definition~\ref{d:Zeta} is non-degenerate relative to $\cC_0$, in this
subsection, we derive an explicit formula for
$\Zeta^{\cC_0,\ff_0,\dotsc,\ff_m}_K(s_1,\dotsc,s_m)$ (see
Theorem~\ref{thm:evaluate}) which is valid for all $p$-adic fields $K\supset k$
such that the associated prime $\fP \cap \fo$ of $k$ does not belong to some
finite exceptional set (depending on $\ff$ and $\cC_0$).  Since we are willing
to ignore finitely many primes of $k$, we may assume that $\ff \subset
\fo[\XX^{\pm 1}]$.

\begin{rem}
  \label{rem:principal}
  Although we will not need this in the sequel, we note that it is possible to
  produce an ``explicit formula'' (in Denef's sense \cite[\S 3]{Den91a}) for
  $\Zeta^{\cC_0, \ff_0,\dotsc,\ff_m}_K(s_1,\dotsc,s_m)$ in terms of a
  principalisation of ideals over $k$, cf.\ \cite[Prop.\ 4.1]{AKOV13} where a
  similar class of integrals is studied using techniques going back to
  \cite{VZG08,Vol10}.  In practice, finding a principalisation of ideals is is
  closely related to finding an embedded resolution of singularities (see
  \cite[Prop.\ 2.5.1]{Wlo05}) and thus equally impractical, in general.
\end{rem}

\begin{rem}
  The role played by the ambient half-open cone $\cC_0$ in
  Definition~\ref{d:Zeta} might seem artificial.  Namely, if $\cC_0$ is closed,
  then by suitably modifying $\ff_0$, we may reduce the computation of
  $\Zeta^{\cC_0, \ff_0,\dotsc,\ff_m}_K(s_1,\dotsc,s_m)$ to the case $\cC_0 =
  \Orth^n$; the case of a general $\cC_0$ then follows by the
  inclusion-exclusion principle.  However, the more practically-minded
  extensions in \cite{topzeta2} of the techniques described in the present
  article rely on successive refinements of the partition $\cC_0 = \bigcup_\tau
  \cC_0^\tau$ used here and the author found these to be most conveniently
  expressed in terms of partitions of $\cC_0$ itself.
\end{rem}

\paragraph{Setup: associated cones, polytopes, and sets.}
\label{p:setup}
As in \S\ref{ss:nd}, let $\cN := \Newton(\prod\ff) = \sum_{f\in \ff} \Newton(f)$
and $f^\tau := \init_\omega(f)$ for $f\in \ff$, a $\cC_0$-visible face $\tau
\subset \cN$, and an arbitrary $\omega \in \cC_0^\tau := \cC_0 \cap
\NormalCone_\tau(\cN)$.  Given $\tau$, let $\tau = \sum_{f\in \ff}\tau(f)$ be
the decomposition of $\tau$ into faces $\tau(f) \subset \Newton(f)$.  Given $f$
and $\tau$, we choose, once and for all, a vertex $\Lambda(f,\tau)$ of
$\tau(f)$.

\begin{defns}
  \label{d:players}
  Let $\tau \subset\cN$ be a $\cC_0$-visible face (see
  Definition~\ref{d:visible}) and let $\bm g \subset \ff$.
  \begin{enumerate}
  \item
    \label{d:players1}
    For $j = 0,\dotsc,m$, define a lattice polytope
    \begin{align*}
      \cP^\tau_j(\bm g) := \conv\Bigl(\!\Bigl\{
      \bigl(\Lambda(g,\tau),\ee_g\bigr) : g \in \ff_j\cap \bm g \Bigr\} \cup
      \Bigl\{ \bigl(\Lambda(f,\tau),0\bigr) : f\in \ff_j\setminus\bm
      g\Bigr\}\!\Bigr),
    \end{align*}
    within $\RR^n \times \RR^{\bm g}$, where $(\ee_g)_{g\in \bm g}$ is the
    standard basis of $\RR^{\bm g} = \{ (x_g)_{g\in \bm g} : \forall g\in \bm
    g\colon x_g \in \RR\} \approx \RR^{\card{\bm g}}$.
  \item
    \label{d:players2}
    Define a half-open cone
      $$\cC^\tau_0(\bm g) := (\cC^\tau_0 \times \StrictOrth^{\bm g}) \cap
      \cP^\tau_0(\bm g)^*,$$ where $\cP_0^\tau(\bm g)^*$ denotes the dual cone
      of $\cP^\tau_0(\bm g)$ as in \S\ref{ss:cones}.
    \item
      \label{d:players3}
      For a a prime $\fp$ of $k$ and a field extension $\fk$ of $\fo/\fp$, let
      \begin{align*}
        \bar V^\tau_{\bm g}(\fk) & := \{ \uu \in \Torus^n(\fk) : \forall f\in
        \ff\colon f^\tau(\uu) = 0 \iff f\in \bm g \}.
      \end{align*}
    \end{enumerate}
  \end{defns}
  \begin{thm}
    \label{thm:evaluate}
    Let $k$ be a number field with ring of integers $\fo$.  Let $\cC_0 \subset
    \Orth^n$ be a half-open rational cone and let $\ff_0 \subset \fo[\XX^{\pm
      1}]$ and $\ff_1,\dotsc,\ff_m \subset \fo[\XX] = \fo[X_1,\dotsc,X_n]$ be
    non-empty finite sets with $0\not\in \ff := \ff_0 \cup\dotsb \cup \ff_m$.
    Define local zeta functions $\Zeta^{\cC_0,
      \ff_0,\dotsc,\ff_m}_K(s_1,\dotsc,s_m)$ as in Definition~\ref{d:Zeta}.  For
    a $\cC_0$-visible face $\tau$ of $\cN := \Newton(\prod \ff)$ (see
    Definition~\ref{d:visible} and \S\ref{ss:newton}) and a subset $\bm g
    \subset \ff$, define a half-open cone $\cC_0^\tau(\bm g)$, a lattice
    polytope $\cP^\tau_j(\bm g)$, and finite sets $\bar V^\tau_{\bm g}(\fk)$ as
    in Definition~\ref{d:players}(\ref{d:players1})--(\ref{d:players3}).
    Finally, define $\cZ^{\cC^\tau_0(\bm g), \cP^\tau_1(\bm
      g),\dotsc,\cP^\tau_m(\bm g)}(\xi_0,\dotsc,\xi_m) \in
    \QQ(\xi_0,\dotsc,\xi_m)$ as in Definition~\ref{def:Z}.

    Suppose that $\ff$ is non-degenerate relative to $\cC_0$ over $k$ in the
    sense of Definition~\ref{d:nd}(\ref{d:nd1}).  Then there exists a finite set
    $S$ of primes of $k$ such that the following holds: if $K$ is a
    non-Archimedean local field extending $k$ such that the associated prime
    ideal $\fP$ of its valuation ring $\fO$ satisfies $\fP \cap \fo \not\in S$,
    then, writing $q = \card{\fO/\fP}$, we have { \small
      \[
      \Zeta^{\cC_0, \ff_0,\dotsc,\ff_m}_K(s_1,\dotsc,s_m) = \sum_{\substack{\bm
          g \subset \ff,\\\tau\subset \cN}} \noof{\bar V^\tau_{\bm g}(\fO/\fP)}
      \dtimes \frac{(q-1)^{\card{\bm g}}}{q^{n}} \dtimes \cZ^{\cC^\tau_0(\bm g),
        \cP^\tau_1(\bm g),\dotsc,\cP^\tau_m(\bm g)}(q^{-1},
      q^{-s_1},\dotsc,q^{-s_m})
      \]
    } for all $s_1,\dotsc,s_m\in \CC$ with $\Real(s_j) \ge 0$ and $\tau$ ranging
    over the $\cC_0$-visible faces of $\cN$.
  \end{thm}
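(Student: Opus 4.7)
The plan is to stratify the integration domain in three nested layers and then reassemble using the generating function from Proposition~\ref{prop:Zseries}. Layer (i): partition $\cC_0$ into the relatively open cones $\cC_0^\tau = \cC_0 \cap \NormalCone_\tau(\cN)$ indexed by the $\cC_0$-visible faces $\tau$ of $\cN$; Lemma~\ref{lem:visible} ensures the non-visible $\tau$ may be ignored. Layer (ii): for fixed $\tau$, decompose by lattice points $\omega \in \cC_0^\tau \cap \ZZ^n$ using the bijection $\xx = \pi^\omega \uu$ between $\Torus^n(\fO)$ and $\{\xx : \nu(\xx) = \omega\}$, which has Jacobian of absolute value $q^{-\bil\one\omega}$. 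Layer (iii): stratify $\Torus^n(\fO)$ by the mod-$\fP$ vanishing pattern of the initial forms, namely by the sets $\{\uu \in \Torus^n(\fO) : \bar\uu \in \bar V^\tau_{\bm g}(\fO/\fP)\}$ for $\bm g \subset \ff$. At the outset I would enlarge $S$ so that $\ff \subset \fo[\XX^{\pm 1}]$ and, by Remark~\ref{r:nd}(\ref{r:nd3}), so that the reduction of $\ff$ modulo $\fp$ remains non-degenerate relative to $\cC_0$ over $\fo/\fp$ for all $\fp \notin S$.

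With the substitution $\xx = \pi^\omega \uu$, Notation~\ref{not:fomega} gives $f(\pi^\omega \uu) = \pi^{\bil{\Lambda(f,\tau)}\omega}\, f_\omega(\uu,\pi)$ with $f_\omega(\uu,\pi) \equiv f^\tau(\uu) \pmod \pi$. Inside the $\bm g$-stratum we have $\nu(f_\omega(\uu,\pi)) = 0$ for $f \in \ff \setminus \bm g$, while $e_f := \nu(f_\omega(\uu,\pi)) \geq 1$ for $f \in \bm g$. Writing $\ee = (e_f)_{f \in \bm g}$ and reading off the vertices of $\cP_j^\tau(\bm g)$, the integrand becomes $\prod_j q^{-s_j \min_{(\alpha,\eta) \in \cP_j^\tau(\bm g)} \bil{(\alpha,\eta)}{(\omega,\ee)}}$; the domain inequality $\norm{\ff_0(\xx)} \leq 1$ translates precisely to $(\omega,\ee) \in \cP_0^\tau(\bm g)^*$; and the remaining constraints to $(\omega,\ee) \in \cC_0^\tau \times \StrictOrth^{\bm g}$. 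The intersection of these conditions is exactly $\cC_0^\tau(\bm g)$ of Definition~\ref{d:players}(\ref{d:players2}).

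The crux is a measure computation on each fiber of $\Torus^n(\fO) \to \Torus^n(\fO/\fP)$ over a point $\bar\uu \in \bar V^\tau_{\bm g}(\fO/\fP)$. Here non-degeneracy does the work: Definition~\ref{d:nd}(\ref{d:nd1}) guarantees that the Jacobian matrix $[\partial_i f^\tau(\bar\uu)]_{f \in \bm g}$ has rank $\card{\bm g}$, so Hensel's lemma (equivalently, the $\fP$-adic implicit function theorem) straightens the map $\uu \mapsto (f_\omega(\uu,\pi))_{f \in \bm g}$ on the fiber over $\bar\uu$ into a $\fP$-adic analytic submersion onto $\fP^{\bm g}$ whose pushforward of the fiber Haar measure is proportional to Haar on $\fP^{\bm g}$ with unit constant of proportionality. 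A direct computation then yields
\[
\mu\bigl(\{\uu \in \text{fiber over } \bar\uu : \nu(f_\omega(\uu,\pi)) = e_f \text{ for all } f \in \bm g\}\bigr) = \frac{(q-1)^{\card{\bm g}}}{q^n}\prod_{f \in \bm g} q^{-e_f}.
\]
Making this argument uniform across all relevant $(\tau,\bm g,\bar\uu)$ with a single finite $S$ — in particular checking that the integrand is truly constant on each sub-stratum and that the Jacobian minor stays a unit mod $\fP$ — is the main technical obstacle; it is a systematic variant of the Hensel-plus-non-degeneracy manoeuvre familiar from Denef and Denef--Hoornaert.

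Assembling the three layers, the contribution of the pair $(\tau, \bm g)$ equals $\noof{\bar V^\tau_{\bm g}(\fO/\fP)} \cdot (q-1)^{\card{\bm g}}/q^n$ times
\[
\sum_{(\omega,\ee) \in \cC_0^\tau(\bm g) \cap \ZZ^{n+\card{\bm g}}} q^{-\bil{\one}{\omega} - \sum_{f \in \bm g} e_f} \prod_{j=1}^m q^{-s_j \min_{(\alpha,\eta) \in \cP_j^\tau(\bm g)} \bil{(\alpha,\eta)}{(\omega,\ee)}},
\]
after collecting the Jacobian factor $q^{-\bil\one\omega}$ and the $\prod q^{-e_f}$ from the measure computation. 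The first exponent in the summand is just $\bil{\one_{n + \card{\bm g}}}{(\omega,\ee)}$, so Proposition~\ref{prop:Zseries}, applied in ambient dimension $n + \card{\bm g}$, identifies the displayed series with $\cZ^{\cC_0^\tau(\bm g), \cP_1^\tau(\bm g),\dotsc,\cP_m^\tau(\bm g)}(q^{-1}, q^{-s_1},\dotsc,q^{-s_m})$. Summing over $\cC_0$-visible faces $\tau$ and subsets $\bm g \subset \ff$ then produces the formula claimed in the theorem.
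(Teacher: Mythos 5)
Your proposal is correct and follows essentially the same route as the paper's proof: restrict to primes where the reduction of $\ff$ remains non-degenerate, decompose the integral over $\cC_0$-visible faces $\tau$, lattice points $\omega$, and residue classes grouped by the vanishing pattern $\bm g$ of the initial forms, compute the fibre contributions via Hensel's lemma and the $\fP$-adic inverse function theorem, and reassemble via Proposition~\ref{prop:Zseries} applied in ambient dimension $n+\card{\bm g}$. The only slips are cosmetic: non-visible faces drop out directly by Definition~\ref{d:visible} (not Lemma~\ref{lem:visible}), and before invoking Hensel one should, as the paper does, rescale the Laurent polynomials by a monomial (harmless by Remark~\ref{r:nd}(\ref{r:nd2})) so that genuine polynomials are fed to it.
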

  \begin{rem*}
    Following \cite{DH01}, the established method of computing $\fP$-adic
    integrals associated with non-degenerate polynomials in the literature (see
    \S\ref{ss:ndlit}) is to express them as countable sums of integrals over
    copies of $\Torus^n(\fO)$, each of which can then computed using Hensel's
    lemma.  Our proof of Theorem~\ref{thm:evaluate} proceeds along the same
    lines.
  \end{rem*}

\begin{proof}[Proof of Theorem~\ref{thm:evaluate}]
  \quad
  \begin{enumerate}
  \item {Restrictions on the local field:}

    Choose $\gamma \in \ZZ^n$ such that $\tilde f := \XX^\gamma f$ belongs to
    $\fo[\XX]$ for all $f\in \bm f$.  By Remark~\ref{r:nd}(\ref{r:nd2}), $\tilde
    \ff := \{\tilde f: f\in \ff\}$ is non-degenerate relative to $\cC_0$.
    According to Remark~\ref{r:nd}(\ref{r:nd3}), for almost all primes $\fp$ of
    $k$, non-degeneracy of $\tilde \ff$ relative to $\cC_0$ is preserved under
    reduction modulo $\fp$.
    We assume that $\fP \cap \fo$ is among these primes.
  \item {Breaking up the integral:}
   
    Write $T(\bar\uu)$ for the fibre of $\bar\uu \in \Torus^n(\fO/\fP)$ under
    the natural map $\Torus^n(\fO) \to \Torus^n(\fO/\fP)$.
    By decomposing $\Torus^n(K)$ into subsets $\Torus^n_\omega(K) \approx
    \Torus^n(\fO)$ as in the proof of Proposition~\ref{prop:monomial_integral}
    and by further decomposing $\Torus^n(\fO) = \bigcup_{\bar u\in
      \Torus^n(\fO/\fP)}T(\bar u)$, we obtain
    \begin{equation}
      \label{eq:breakup}
      \Zeta^{\cC_0, \ff_0,\dotsc,\ff_m}_K(s_1,\dotsc,s_m) =
      \sum_\tau
      \sum_{\bm g \subset \ff}
      \sum_{\bar\uu\in \bar V^\tau_{\bm g}(\fO/\fP)}
      \sum_{\omega\in \cC_0^\tau \cap \ZZ^n}
      q^{-\bil\one\omega}
      I_\omega(\bar\uu),
    \end{equation}
    where $I_\omega(\bar\uu) := \int_{\{\ww\in T(\bar\uu) :
      \norm{\ff_0(\pi^\omega \ww)} \le 1\}} \prod_{j=1}^m \norm{\ff_j(\pi^\omega
      \ww)}^{s_j} \dd\mu(\ww)$.
  \item {Computing the pieces:}

    We write $f_\omega$ and $\tilde f_\omega$ for $f_\omega(\XX,\pi)$ and
    $\tilde f_\omega(\XX,\pi)$ (see Notation~\ref{not:fomega}), respectively;
    note that $\init_\omega(\tilde f) = \XX^{\gamma} \init_\omega(f)$ and
    $\tilde f_\omega = \XX^{\gamma} f_\omega$.
    For $\omega\in \cC_0^\tau \cap \ZZ^n$, $\uu\in \Torus^n(\fO)$, and $f\in
    \ff$, we have $\bigl\lvert f(\pi^\omega \uu)\bigr\rvert = \bigl\lvert
    \pi^{\bil{\Lambda(f,\tau)}\omega} \dtimes \tilde f_\omega(\uu)\bigr\rvert$.
    Fix $\tau \subset \cN$, $\omega \in \cC_0^\tau \cap \ZZ^n$, $\bm g\subset
    \ff$, and $\bar\uu \in \bar V^\tau_{\bm g}(\fO/\fP)$.  Let $e = \card{\bm
      g}$, $r = \card{\ff}$, $\ff = \{ f_1,\dotsc,f_r\}$, and $\bm g = \{
    f_1,\dotsc,f_e\}$.  Let $\uu = (u_1,\dotsc,u_n) \in \Torus^n(\fO)$ be an
    arbitrary lift of $\bar\uu$.  By non-degeneracy of $\tilde \ff$ over
    $\fo/\fp$ (hence over $\fO/\fP$) and after suitably renumbering variables,
    we may assume that $\det \bm h'(\uu) \not\equiv 0 \bmod {\fP}$, where $\bm h
    := (\tilde f_{1,\omega},\dotsc,\tilde f_{e,\omega},
    X_{e+1}-u_{e+1},\dotsc,X_n-u_n)$.  Hensel's lemma for polynomial mappings
    \cite[Ch.\ III, \S 4.5, Cor.\ 2]{Bou89} allows us to replace
    $u_1,\dotsc,u_e$ by elements in the same residue classes modulo $\fP$ in
    such a way that $\bm h(\uu) = 0$ holds.  The $\fP$-adic inverse function
    theorem {\cite[Ch.\ III, \S 4.5, Prop.\ 7]{Bou89}} (applied to $\bm h(\uu +
    \XX)$) then shows that $\bm h$ induces a bianalytic and measure-preserving
    bijection from $T(\bar \uu) = \uu + \fP^{n}$ onto $\fP^{n}$.
    Let $\varepsilon_{ij} = 1$ if $f_i \in \ff_j$ and $\varepsilon_{ij} = 0$
    otherwise.  For $\yy\in \fP^n$, $\vv := \bm h^{-1}(\yy) - \uu$, and $0\le j
    \le m$, we have
    \begin{align*}
      \norm{\ff_j(\pi^\omega(\uu + \vv))} & = \Bigl\lVert \varepsilon_{1j}
      \pi^{\bil{\Lambda(f_1,\tau)}\omega} y_1, \dotsc, \varepsilon_{ej}
      \pi^{\bil{\Lambda(f_e,\tau)}\omega} y_e,
      \\
      &\quad\quad\quad \varepsilon_{e+1,j}
      \pi^{\bil{\Lambda(f_{e+1},\tau)}\omega}, \dotsc, \varepsilon_{rj}
      \pi^{\bil{\Lambda(f_r,\tau)}\omega}\Bigr\rVert =: a_{j\omega}(\yy).
    \end{align*}
    Hence, a change of variables yields
    \begin{align*}
      I_\omega(\bar\uu) & = \mu_{n-e}(\fP^{n-e}) \dtimes \int_{\left\{\yy\in
          \fP^e : a_{0\omega}(\yy) \le 1\right\}}
      a_{1\omega}(\yy) ^{s_1} \dotsb a_{m\omega}(\yy)^{s_m} \dd\mu_e(\yy) \\
      & = \frac{(1-q^{-1})^e}{q^{n-e}} \sum_{\substack{\psi\in
          \NN^e,\\a_{0\omega}(\pi^\psi)\le 1}} q^{-\bil\one\psi}
      a_{1\omega}(\pi^\psi)^{s_1} \dotsb a_{m\omega}(\pi^\psi)^{s_m}
      \\
      & = \frac{(q-1)^e}{q^n} \!\!\!\! \!\!\!\! \!\!\!\! \!\!\!\!
      \sum_{\substack{\psi\in \NN^e, \\
          \varepsilon_{i0} \dtimes \left( \bil{\Lambda(f_i,\tau)}\omega +
            \psi_i\right) \ge 0 \,\,(1\le i\le e), \\
          \varepsilon_{i0} \dtimes \bil{\Lambda(f_{i},\tau)}\omega \ge
          0\,\,(e+1\le i\le r) }} \!\!\!\! \!\!\!\! \!\!\!\!  q^{-\bil\one\psi}
      \prod_{j=1}^m q^{-s_j \min\left( \substack{ \varepsilon_{ij} \dtimes
            \left( \bil{\Lambda(f_i,\tau)}\omega +
              \psi_i\right) \,\,(1\le i\le e), \\
            \varepsilon_{ij} \dtimes \bil{\Lambda(f_{i},\tau)}\omega \,\,(e+1\le
            i\le r) } \right)},
    \end{align*}
    where we wrote $\mu_d$ for the normalised Haar measure on $\fO^d$ and
    $\pi^\psi = (\pi^{\psi_1},\dotsc,\pi^{\psi_e})$.
  \end{enumerate}
  Using Proposition~\ref{prop:Zseries}, the claim now follows from
  \eqref{eq:breakup}.
\end{proof}

\begin{ex}[Igusa's local zeta function]
  Let $f\in k[\XX]$ be non-constant.  Denef and Hoornaert \cite[\S 4]{DH01} gave
  an explicit formula for the Igusa zeta function $\int_{\fO^n}
  \abs{f(\xx)}^s\dd\mu(\xx)$ (for $k = \QQ$, $K = \QQ_p$) if $(f)$ is
  non-degenerate relative to $\Orth^n$ in our terminology.  As we will explain
  in \S\ref{ss:ndlit}, Theorem~\ref{thm:evaluate} essentially generalises their
  result.  In fact, even in the very special case of a single polynomial, our
  method applies to a larger class of polynomials.  Namely, let $f =
  f_1^{e_1}\dotsb f_m^{e_m}$ be the factorisation into powers of distinct
  irreducibles and suppose that $(f_1,\dotsc,f_m)$ is non-degenerate relative to
  $\Orth^n$.  By applying Theorem~\ref{thm:evaluate} with $\ff_0 = \{ 1 \}$,
  $\ff_1 = \{f_1\},\dotsc,\ff_m = \{f_m\}$, we may compute $ \int_{\fO^n}
  \abs{f_1(\xx)}^{s_1} \dotsb \abs{f_m(\xx)}^{s_m} \dd\mu(\xx) $ and hence
  $\int_{\fO^n} \abs{f(\xx)}^s\dd\mu(\xx)$ via $s_j = e_j s$ for $j=1,\dotsc,m$.
  Non-degeneracy of $(f)$ implies that of $(f_1,\dotsc,f_m)$ but the converse is
  false.
\end{ex}

\begin{rem}
  \label{r:eval_and_algmod}
  There are various ways of interpreting the integral in
  Theorem~\ref{thm:coneint} within the framework of Theorem~\ref{thm:evaluate}.
  For instance, we can regard the integrand as a specialisation of
  $\abs{x_{11}}^{s_1} \dotsb \abs{x_{dd}}^{s_d}$.  On the level of the rational
  functions in Theorem~\ref{thm:evaluate}, such specialisations amount to
  monomial substitutions as in \S\ref{ss:monomial_sub}.  Specifically, let
  $\gamma$ and $\delta$ be independent variables over $\QQ$.
  Remark~\ref{r:Zdenom} shows that we may define $\tilde \cZ_{\bm
    g}^\tau(\gamma,\delta) := \cZ^{\cC^\tau_0(\bm g), \cP^\tau_1(\bm
    g),\dotsc,\cP^\tau_m(\bm
    g)}(\gamma,\gamma^{-1}\delta,\dotsc,\gamma^{-m}\delta) \in
  \QQ(\gamma,\delta)$ for $\bm g\subset \ff$ and $\tau \subset \cN$.  Hence,
  under the same assumptions as in Theorem~\ref{thm:evaluate}, for $\Real(s) \ge
  m$, we have
  \[
  \Zeta^{\cC_0, \ff_0,\dotsc,\ff_m}_K(s-1,\dotsc,s-m) = \sum_{\substack{\bm g
      \subset \ff,\\\tau\subset \cN}} \noof{\bar V^\tau_{\bm g}(\fO/\fP)}
  \dtimes \frac{(q-1)^{\card{\bm g}}}{q^{n}} \dtimes \tilde\cZ_{\bm
    g}^\tau(q^{-1}, q^{-s}).
  \]
\end{rem}

The central feature of Theorem~\ref{thm:evaluate} is that it is entirely
explicit.  In contrast, the general formulae for subalgebra zeta functions given
in \cite{dSG00,Vol10} rely on the usually infeasible step of constructing a
resolution of singularities.  As an illustration of the key ingredients of
Theorem~\ref{thm:evaluate} and as a demonstration of its practical value, in
\S\ref{ss:sl2}, we use it to compute the known $\fP$-adic subalgebra zeta
functions of $\mathfrak{sl}_2(\ZZ)$.

\subsection{Non-degeneracy and \texorpdfstring{$\fP$}{P}-adic integration in the literature}
\label{ss:ndlit}

Let $k$ be a field with algebraic closure $\bar k$.  In applications to
Igusa-type zeta functions (see e.g.\ \cite{DH01,VZG08}), non-degeneracy of a
non-zero polynomial $f\in k[\XX]$ is usually expressed not in terms of the
Newton polytope $\cN := \Newton(f)$ but in terms of the Newton polyhedron $\cP
:= \cN + \Orth^n$.  This approach is subsumed by ours as follows.  The normal
fan of $\cP$ is the least common refinement of the normal fan of $\cN$ and the
fan of faces of $\Orth^n$.  Hence, the partition $\Orth^n = \bigcup_{\gamma}
\NormalCone_\gamma(\cP)$ into (relatively open) normal cones of non-empty faces
$\gamma \subset \cP$ refines our partition $\Orth^n =
\bigcup_\tau(\NormalCone_{\tau}(\cN) \cap \Orth^n)$ indexed by $\Orth^n$-visible
faces $\tau \subset \cN$.  The number of faces of $\cP$ will often greatly
exceed that of $\cN$ which leads to considerable practical limitations of the
former approach.  We note that by Lemma~\ref{lem:visible}, the compact faces of
$\cP$ often considered in the literature coincide with the
$\StrictOrth^n$-visible faces of $\cN$ in our terminology.  In this sense, in
addition to being better suited for explicit computations,
Theorem~\ref{thm:evaluate} provides a far-reaching generalisation of \cite[Thm\
4.3]{Bor12} (itself a generalisation of \cite[Thm\ 4.2]{DH01}) on integrals
$\int_{\ZZ_p^n} \abs{f(\xx)}^s\! \abs{g(\bm x)}\dd\mu(\xx)$; the assumptions
used in \cite{Bor12} are equivalent to $(f,g)$ being non-degenerate relative to
$\Orth^n$ in our sense.

Veys and \Zuniga{}-Galindo \cite{VZG08} used yet another notion of
non-degeneracy for $\ff = (f_i)_{i\in I}$ to compute integrals
$\int_{\fO^n}\norm{\ff(\xx)}^s\dd\mu(\xx)$.  Their notion is based on the
polyhedron $\conv\bigl(\bigcup_{i\in I}\Newton(f_i)\bigr) + \Orth^n$.  The
aforementioned computational difficulties surrounding unbounded polyhedra apply
here too.  More importantly, we found the non-degeneracy assumption in
\cite[Def.\ 3.1]{VZG08} to be too restrictive for our intended applications to
subalgebra and submodule zeta functions.  The Newton polytope $\cN =
\Newton(\prod_i f_i)$ central to our approach previously featured in work of
\Zuniga-Galindo \cite{ZG09}; it frequently appears in applications of
non-degeneracy to problems in Archimedean settings, see e.g.\ \cite{Oka90}.

\section{Topological zeta functions}
\label{s:topzeta}

The topological zeta function $\Zeta_{f,\topo}(\ess)$ associated with a
polynomial $f\in k[\XX]$ over a number field, say, is usually defined in terms
of numerical data extracted from an embedded resolution of singularities of
$\Spec(k[\XX]/f) \subset \Spec(k[\XX])$.  There are two established ways, both
of them due to Denef and Loeser, of proving independence of the chosen
resolution (for a third approach, see \cite[\S 1.5]{Loe09}).  In \cite{DL92},
Denef and Loeser used $\ell$-adic interpolation techniques to express
$\Zeta_{f,\topo}(\ess)$ as a limit of Igusa zeta functions associated with $f$,
thus giving meaning to the limit ``$p\to 1$'' from the introduction.  A more
geometric approach is given in \cite{DL98}, where the motivic zeta function
associated with $f$ is introduced and shown to specialise to
$\Zeta_{f,\topo}(\ess)$.

In this section, based on ideas from \cite{DL92}, we give a self-contained
introduction to topological zeta functions using the original $\ell$-adic
approach.  We attach topological zeta functions to systems of local zeta
functions of a certain shape arising in $\fP$-adic integration.  Our approach
therefore applies to a range of algebraic counting problems.  In particular, in
\S\ref{ss:topsub}, we give rigorous definitions of the topological subalgebra
and submodule zeta functions informally introduced in \S\ref{s:intro}.  The
first of these zeta functions was defined in \cite[\S 8]{dSL04} (in greater
generality and slightly differently, see Remark~\ref{r:shift}) as a
specialisation of a suitable motivic one.  In contrast to the more recent theory
of motivic integration, $\fP$-adic integration is firmly established as a tool
in asymptotic algebra.  By relying exclusively on $\fP$-adic methods, the
machinery in this section can, for instance, immediately be used to define
topological representation zeta functions associated with finitely generated
torsion-free nilpotent groups via \cite[Prop.\ 3.1]{Vol10}.

\subsection{Formal and \texorpdfstring{$\ell$}{l}-adic binomial expansions}
\label{ss:binomial}

In this subsection, which formalises ideas used in \cite[(2.4)]{DL92}, we
explain how certain rational functions $W(\qq,\tee)$ admit an algebraic
``reduction modulo $\qq-1$'' which corresponds to taking an $\ell$-adic limit
$x\to 1$ of $W(x,x^{-s})$.

Let $k$ be a field of characteristic zero and $\zz$ be an indeterminate over
$k$.  Given a formal power series $f \in k \llb \zz \rrb$ without constant term,
it is well-known that formal powers defined using the binomial series $(1+f)^a =
\sum_{d=0}^\infty {\binom a d} f^d$ with $a\in k$ and $\binom a d = \frac {a
  (a-1) \dotsb (a-d+1)}{d!}$ satisfy $(1+f)^a (1+f)^b = (1+f)^{a+b}$ for $a,b\in
K$, see e.g.\ \cite[\S 1.1.6]{GJ83}.  Henceforth, let $\qq$, $\tee_1$, $\dotsc$,
$\tee_m$, $\ess_1$, $\dotsc$, $\bm s_m$ be algebraically independent over $\QQ$.
Let $\tee = (\tee_1,\dotsc,\tee_m)$ and $\ess = (\ess_1, \dotsc, \ess_m)$.  For
$a\in \QQ[\ess]$, write $\qq^a := (1 + (\qq-1))^a \in \QQ[\ess]\llb \qq-1\rrb$,
and let $\qq^{-\ess} := (\qq^{-\ess_1},\dotsc,\qq^{-\ess_m})$.

\begin{lemma}
  The homomorphism $f(\qq,\tee) \mapsto f(\qq,\qq^{-\ess})$ embeds
  $\QQ[\qq,\tee]$ into $\QQ[\ess]\llb \qq-1\rrb$.
\end{lemma}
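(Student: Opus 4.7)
The plan is to establish the lemma in two stages: first verify that the map is a well-defined ring homomorphism, then prove injectivity. Well-definedness is immediate because each substituted series $\qq^{-\ess_j} = \sum_{d\ge 0} \binom{-\ess_j}{d}(\qq-1)^d$ lies in $\QQ[\ess]\llb \qq-1\rrb$ with constant term $1$, so polynomial substitution causes no convergence issues. The homomorphism property is formal, using the multiplicativity rule $\qq^a\qq^b = \qq^{a+b}$ for $a,b\in \QQ[\ess]$ recalled just before the lemma (applied with $k = \QQ(\ess)$ and $\zz = \qq-1$).

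For injectivity, my strategy is to reduce to linear independence of distinct monomials in $\qq$ by specializing $\ess$ at positive integer tuples. Write $f = \sum_{\alpha\in S} c_\alpha(\qq)\, \tee^\alpha$ with finite support $S\subset \NN_0^m$ and $c_\alpha \in \QQ[\qq]$, and assume $f(\qq,\qq^{-\ess}) = 0$. For $N = (N_1,\dotsc,N_m)\in \NN^m$, the coefficient-wise substitution $\sigma_N\colon \QQ[\ess]\llb \qq-1\rrb \to \QQ\llb \qq-1\rrb$, $\ess_j \mapsto N_j$, is a ring homomorphism under which $\qq^{-\ess_j}$ passes to $\sum_d \binom{-N_j}{d}(\qq-1)^d$---precisely the Taylor expansion at $\qq=1$ of the rational function $\qq^{-N_j}$. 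Applying $\sigma_N$ to the hypothesis therefore shows that the Taylor expansion of $\sum_{\alpha\in S} c_\alpha(\qq)\, \qq^{-\bil\alpha N}$ at $\qq=1$ is the zero series, and hence the underlying rational function itself vanishes.

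To conclude, I would choose $N$ so that the integers $\bil\alpha N$ for $\alpha\in S$ are pairwise distinct and differ by more than $\max_{\alpha}\deg c_\alpha$---for instance, $N_j = K^j$ with $K$ sufficiently large relative to $S$ and the degrees of the $c_\alpha$. Setting $D := \max_{\alpha\in S}\bil\alpha N$ and multiplying through by $\qq^D$ yields $\sum_{\alpha\in S} c_\alpha(\qq)\, \qq^{D-\bil\alpha N} = 0$ in $\QQ[\qq]$; by the choice of $N$, the $\qq$-monomial supports of the individual summands are pairwise disjoint subsets of $\NN_0$, forcing each $c_\alpha = 0$. The only real obstacle is bookkeeping, namely confirming that $\sigma_N$ indeed sends the formal binomial series $\qq^{-\ess_j}$ to the expected Taylor expansion, which is automatic because $\sigma_N$ acts coefficient-wise and $\binom{-\ess_j}{d}\big\vert_{\ess_j = N_j} = \binom{-N_j}{d}$.
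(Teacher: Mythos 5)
Your argument is correct, and at its core it is the same specialisation idea as the paper's proof: both reduce injectivity to substituting integer values for $\ess_1,\dotsc,\ess_m$ and examining the resulting object in the single variable $\qq$. The difference lies in the sign of the specialisation and in what it costs. The paper substitutes $\ess_j \gets -r_j$ with $r_j\in\NN$, so $\qq^{-\ess_j}$ specialises to the honest polynomial $\qq^{r_j}$; only the finite binomial theorem is needed, the specialised series is literally the expansion of $f(\qq,\qq^{r_1},\dotsc,\qq^{r_m})$ in powers of $\qq-1$, and the tuple $r$ is produced by a short induction showing a non-zero polynomial survives some such monomial substitution. You instead substitute positive integers $N_j$, which forces a detour: you must identify $\sigma_N(\qq^{-\ess_j})$ with the Taylor expansion of the rational function $\qq^{-N_j}$ at $\qq=1$ (fine, via the formal binomial inverse), invoke injectivity of this expansion to pass from a vanishing series to a vanishing element of $\QQ(\qq)$, and then make an explicit separating choice such as $N_j = K^j$ with $K$ large compared to the exponents in $S$ and to $\max_\alpha \deg c_\alpha$ so that the supports of the terms $c_\alpha(\qq)\,\qq^{D-\bil\alpha N}$ are disjoint. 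All of these steps are sound, so your proof works; the paper's negative specialisation simply avoids rational functions altogether and trades your explicit separating construction for an inductive existence argument, making it slightly more economical, while your version has the mild virtue of being completely constructive about which specialisation detects a given non-zero $f$.
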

\begin{proof}
  Let $0\not= f(\qq,\tee) \in \QQ[\qq,\tee]$.  By induction, we find $r\in
  \NN^m$ with $f(\qq,\qq^{r_1},\dotsc,\qq^{r_m})\not= 0$.
  The binomial theorem shows that if we substitute $\ess_j \gets -r_j$ in the
  coefficients of $f(\qq,\qq^{-\ess})$ as a series in $\qq-1$, we obtain the
  coefficients of $f(\qq,\qq^{r_1}, \dotsc, \qq^{r_m})$ as a polynomial in
  $\qq-1$.  As $f(\qq,\qq^{r_1},\dotsc,\qq^{r_m})\not= 0$, we conclude that
  $f(\qq,\qq^{-\ess})\not= 0$.
\end{proof}

Hence, $f(\qq,\tee)\mapsto f(\qq,\qq^{-\ess})$ extends to an embedding of
$\QQ(\qq,\tee)$ into the field $\QQ(\ess)\llp \qq-1\rrp$ of formal Laurent
series in $\qq-1$ over $\QQ(\ess)$.  Consider the $\QQ$-subalgebra, $\cA$ say,
of $\QQ(\qq,\tee)$ generated by $\qq^{\pm 1}, \tee_1^{\pm 1}, \dotsc,
\tee_m^{\pm 1}$, and all $(\qq^a\tee^{b} -1)^{-1}$ with $(0,0) \not=(a,b) \in
\ZZ^{1+m}$.
\begin{defn}
  \label{d:MM}
  Let $\MM$ be the $\QQ$-subalgebra of $\QQ(\qq,\tee)$ consisting of those
  $W(\qq,\tee)\in \cA$ with $W(\qq,\qq^{-\ess}) \in \SuperMM :=
  \QQ(\ess)\llb\qq-1\rrb$.
\end{defn}

We henceforth regard $\MM$ as a subalgebra of $\SuperMM$ via $W(\qq,\tee)\mapsto
W(\qq,\qq^{-\ess})$.  For added clarity, we sometimes write
$\MM(\qq;\tee_1,\dotsc,\tee_m)$ instead of $\MM$.
\begin{notation}
  \label{not:red}
  Let $\red{\blank}\!\colon \SuperMM \to \QQ(\ess)$ denote reduction modulo
  $\qq-1$.
\end{notation}
For $W^\sharp \in \SuperMM$, we sometimes write $\red{W^\sharp(\ess)}$ instead
of $\red{W^\sharp\!}(\ess)$.
\begin{exs}
  \quad
  \begin{enumerate}
  \item $\qq^{-1} = \sum_{d=0}^\infty (-1)^d (\qq-1)^d$ and $\tee_j^{-1} =
    \sum_{d=0}^\infty \binom{\ess_j}d (\qq-1)^d$ so that $\red{\qq^{\pm 1}} =
    \red{\tee_j^{\pm 1}} = 1$.
  \item $\frac{\qq-1}{\qq^a\tee^{b}-1}\in \MM$ and $
    \red*{\frac{\qq-1}{\qq^a\tee^b-1}} = \frac 1 {a-\bil{b} {\ess}} = \frac 1 {a
      - b_1 \ess_1 - \dotsb - b_m \ess_m} $ for $(0,0)\not= (a,b)\in \ZZ^{1+m}$.
  \end{enumerate}
\end{exs}

Fix a rational prime $\ell$.  For $v\ge 1$, let $U_v = \{ x \in \ZZ_\ell :
\abs{x -1}_\ell \le \ell^{-v}\}$, a subgroup of $\ZZ_\ell^\times$.  It is
well-known that each $U_v$ admits a unique continuous $\ZZ_\ell$-module
structure $U_v\times \ZZ_\ell \to U_v, (x,e) \mapsto x^e$ extending the
canonical $\ZZ$-action.  The following is a multivariate version of an argument
from \cite[(2.4)]{DL92}.
\begin{prop}
  \label{prop:Winterpol}
  Let $W(\qq,\tee)\in \MM$.  There exists a finite union $\cH$ of affine
  hyperplanes (defined over $\ZZ$, independently of $\ell$) in $\QQ_\ell^m$ such
  that $ U_2 \!\setminus\!\{1\} \times\ZZ^m\!\setminus\!\cH \to \QQ_\ell, (x,s)
  \mapsto W(x,x^{-s_1},\dotsc,x^{-s_m}) $ extends to a continuous map $F\colon
  U_2\times \ZZ_\ell^m\!\setminus\!\cH\to\QQ_\ell$ with $F(1,s) = \red{W(s)}$
  for $s\in \ZZ_\ell^m\!\setminus\!\cH$.
\end{prop}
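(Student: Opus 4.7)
The plan is to put $W$ into a normal form making its $(\qq-1)$-order structure transparent, define $F$ via $\ell$-adic binomial series, and identify its value at $x = 1$ with $\red{W(\ess)}\big|_{\ess = s}$.

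First, since $W \in \cA$, I may write
\[
W(\qq, \tee) = \frac{P(\qq, \tee)}{\prod_{i=1}^r (\qq^{a_i}\tee^{b_i} - 1)^{e_i}}
\]
with $P \in \QQ[\qq^{\pm 1}, \tee^{\pm 1}]$, $(a_i, b_i) \in \ZZ^{1+m}\setminus\{(0,0)\}$, and $e_i \in \NN$, and I set
\[
\cH := \textstyle\bigcup_{i=1}^r \{s \in \QQ_\ell^m : a_i - \bil{b_i}{s} = 0\},
\]
a finite union of affine hyperplanes cut out by $\ZZ$-linear equations, independent of $\ell$. Writing $e_i(\ess) := a_i - \bil{b_i}{\ess}$ and $N := \sum_i e_i$, the identity $\qq^e = \sum_{d\ge 0}\binom{e}{d}(\qq-1)^d$ factors
\[
\qq^{e_i(\ess)} - 1 \;=\; (\qq - 1)\, e_i(\ess) \,\tilde u(\qq, e_i(\ess)), \quad \tilde u(\qq, e) \;:=\; 1 + \sum_{d \ge 1} \frac{(e-1)(e-2)\dotsb(e-d)}{(d+1)!}(\qq-1)^d,
\]
so that each $\tilde u(\qq, e_i(\ess))$ is a unit in $\QQ[\ess]\llb\qq-1\rrb$. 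Expanding $P(\qq, \qq^{-\ess}) = \sum_d \tilde p_d(\ess)(\qq-1)^d$ via the binomial series yields $\tilde p_d \in \QQ[\ess]$ (as $P$ is Laurent in $\qq,\tee$), and the hypothesis that $W(\qq, \qq^{-\ess})$ carries no negative $(\qq-1)$-powers forces $\tilde p_0 = \dotsb = \tilde p_{N-1} = 0$ as elements of $\QQ[\ess]$, because the non-$(\qq-1)$ portion of the denominator, $\prod_i e_i(\ess)^{e_i}\,\tilde u(\qq, e_i(\ess))^{e_i}$, is a unit in $\QQ(\ess)\llb\qq-1\rrb$ whose constant term $\prod_i e_i(\ess)^{e_i}$ is a nonzero polynomial in $\ess$.

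Second, I use the standard fact that $U_2 \times \ZZ_\ell \to U_2$, $(x, e)\mapsto x^e := \sum_{d \ge 0}\binom{e}{d}(x-1)^d$, is a jointly continuous $\ZZ_\ell$-module action (the estimate $v_\ell(d!) \le (d-1)/(\ell-1)$ forces absolute convergence for every prime, including $\ell = 2$), and that $U_2$ is a torsion-free pro-$\ell$ group, so $x^e \ne 1$ for $(x, e) \in (U_2 \setminus \{1\}) \times (\ZZ_\ell \setminus \{0\})$. For $(x, s) \in U_2 \times (\ZZ_\ell^m \setminus \cH)$, each $e_i(s) \in \ZZ_\ell \setminus \{0\}$, so I define
\[
F(x, s) \;:=\; \frac{\sum_{d \ge 0}\tilde p_{d+N}(s)(x - 1)^d}{\prod_{i=1}^r e_i(s)^{e_i}\,\tilde u(x, e_i(s))^{e_i}}.
\]
Both series converge $\ell$-adically on $U_2$ with coefficients depending continuously on $s$, and the denominator has nonzero constant term $\prod_i e_i(s)^{e_i}$; hence $F$ is well-defined and jointly continuous on $U_2 \times (\ZZ_\ell^m \setminus \cH)$. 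For $x \in U_2 \setminus \{1\}$ and integer $s \in \ZZ^m \setminus \cH$, reassembling the factorization recovers $F(x, s) = P(x, x^{-s})/\prod_i(x^{e_i(s)} - 1)^{e_i} = W(x, x^{-s})$, so $F$ extends the given map. Reading off constant terms at $x = 1$ gives $F(1, s) = \tilde p_N(s)/\prod_i e_i(s)^{e_i}$, which by the same factorization is exactly the constant $(\qq-1)$-coefficient of $W(\qq, \qq^{-\ess})$ evaluated at $\ess = s$, namely $\red{W(\ess)}\big|_{\ess = s}$.

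The main obstacle is the polynomial-level vanishing $\tilde p_d = 0$ in $\QQ[\ess]$ for $d < N$: this is what converts the purely formal condition $W \in \MM$ into a statement about pointwise $\ell$-adic evaluation at any $s \notin \cH$. Once isolated, it follows from the observation that $\prod_i e_i(\ess)^{e_i}\,\tilde u(\qq, e_i(\ess))^{e_i}$ both lies in, and is a unit of, $\QQ[\ess]\llb \qq - 1\rrb$; everything else reduces to standard joint continuity of $\ell$-adic binomial series and of reciprocals of units. A minor but worth-executing bookkeeping point is that the same $\cH$ — cut out by the integer equations $a_i - \bil{b_i}{\ess} = 0$ coming from the factors of the denominator of $W$ — serves uniformly for every prime $\ell$, as the statement requires.
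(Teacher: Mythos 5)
Your proposal is correct and follows essentially the same route as the paper: the paper's proof (via Lemma~\ref{lem:Winterpol}) clears the denominator of $W$ by the factors $\qq^a\tee^b-1$, uses the same factorization $x^e-1=e(x-1)\dtimes(1+h(x,e))$ with the $\ell$-adic binomial series, the same hyperplane arrangement $\cH$, and the same key observation that $W(\qq,\qq^{-\ess})\in\SuperMM$ forces the low-order $(\qq-1)$-coefficients of the numerator to vanish identically in $\QQ[\ess]$. The only difference is organizational (the paper first extends the cleared function $\prod(a-\bil b s)^{e(a,b)}\dtimes W(x,x^{-s})$ to all of $U_v\times\ZZ_\ell^m$ and then divides, while you divide directly, noting that $\tilde u(x,e)$ is a unit of absolute value $1$ on $U_2$), which changes nothing of substance.
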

Note that $\ZZ^m\!\setminus\!\cH$ is dense in $\ZZ_\ell^m$ whence $F$ is unique.
Proposition~\ref{prop:Winterpol} is a consequence of the following more
technical statement.

\begin{lemma}
  \label{lem:Winterpol}
  Let $W(\qq,\tee) \in \MM$ and $v\ge 1$ with $\ell^v \ge 3$.  Suppose that
  $\prod_{(a,b)\in \Lambda} (\qq^a\tee^{b}-1)^{e(a,b)}\dtimes W(\qq,\tee)\in
  \QQ[\qq^{\pm 1}, \tee^{\pm 1}]$, where $\Lambda = \ZZ^{1+m}\setminus\{0\}$ and
  $e\colon \Lambda \to \NN_0$ has finite support.
  \begin{enumerate}
  \item
    \label{it:Winterpol1}
      
    Define $\cH = \{ s\in \ZZ^m : a = \bil {b} s \text{ for some } (a,b)\in
    \Lambda \text{ with } e(a,b)> 0\}$.  Then $$G_0\colon
    U_v\setminus\{1\}\times \ZZ^m\setminus\cH \to \QQ_\ell, \quad (x,s) \mapsto
    \prod_{(a,b)\in \Lambda} (a-\bil b s)^{e(a,b)} \dtimes
    W(x,x^{-s_1},\dotsc,x^{-s_m})$$ extends to a continuous map $G\colon
    U_v\times \ZZ_\ell^m \to \QQ_\ell$.
  \item
    \label{it:Winterpol2}
    $g(\ess) \!:=\! \prod_{(a,b)\in \Lambda}(a\!-\!\bil{b}{\ess})^{e(a,b)}
    \red{W(\ess)}$ belongs to $\QQ[\ess]$ and $G(1,s) \!=\! g(s)$ for $s\!\in\!
    \ZZ_\ell^m$.
  \end{enumerate}
\end{lemma}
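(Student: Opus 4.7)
The plan is to express $G_0$ in a form that is manifestly continuous on all of $U_v\times\ZZ_\ell^m$ by means of the $p$-adic logarithm $\log\colon U_v \to \ell^v\ZZ_\ell$ and its inverse $\exp$, both convergent isometries under the hypothesis $\ell^v\ge 3$. Writing $y := \log x$ and $x^c := \exp(cy)$ for $c\in\ZZ_\ell$ extends the usual power function to $\ZZ_\ell$-valued exponents and makes $W(x,x^{-s})$ meaningful for all $s\in\ZZ_\ell^m$ where the denominators do not vanish.

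First, I introduce the Laurent polynomial wrapper $P(\qq,\tee) := \prod_{(a,b)\in\Lambda}(\qq^a\tee^b-1)^{e(a,b)}\dtimes W(\qq,\tee) \in \QQ[\qq^{\pm 1},\tee^{\pm 1}]$ provided by hypothesis. Substitution yields a continuous function $\tilde P(x,s) := P(x,x^{-s_1},\dotsc,x^{-s_m})$ on $U_v\times\ZZ_\ell^m$; since each of its monomials has the form $c\exp((a+\bil bs)y)$, expansion in $y$ gives an absolutely convergent (uniformly in $s$) representation $\tilde P(x,s)=\sum_{k\ge 0} P_k(s)\,y^k$ with $P_k\in\QQ[\ess]$.

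Next comes the key input, which uses the hypothesis $W\in\MM$: this places $W(\qq,\qq^{-\ess})$ in $\QQ(\ess)\llb\qq-1\rrb$. Combined with $\qq^{a-\bil b\ess}-1 = (a-\bil b\ess)(\qq-1)+O((\qq-1)^2)$ and $E := \sum_{(a,b)}e(a,b)$, this yields, in $\QQ(\ess)\llb\qq-1\rrb$,
\[
 P(\qq,\qq^{-\ess}) \;=\; g(\ess)\,(\qq-1)^E \;+\; O\bigl((\qq-1)^{E+1}\bigr), \qquad g(\ess)=\prod_{(a,b)}(a-\bil b\ess)^{e(a,b)}\,\red{W(\ess)}.
\]
Since the left-hand side has coefficients in $\QQ[\ess]$, so does $g$, which delivers the polynomiality half of~(\ref{it:Winterpol2}). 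Because $\qq-1 = y+O(y^2)$ is a unit multiple of $y$, the same vanishing transfers to the $y$-expansion above: $P_k=0$ for $k<E$ and $P_E=g$.

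Finally, set $S(x,s) := \sum_{k\ge E}P_k(s)\,y^{k-E}$ (continuous on $U_v\times\ZZ_\ell^m$ with $S(1,s)=g(s)$), so $\tilde P(x,s) = y^E S(x,s)$, and factor $x^c-1 = cy\,\phi(cy)$ where $\phi(z):=\sum_{k\ge 0}z^k/(k+1)!$ is $p$-adically convergent with $\phi(0)=1$. The prefactor $\prod(a-\bil bs)^{e(a,b)}$ then cancels the algebraic $c$-factors, leaving, on the dense set where $G_0$ is defined,
\[
 G_0(x,s) \;=\; \frac{S(x,s)}{\prod_{(a,b)}\phi\bigl((a-\bil bs)y\bigr)^{e(a,b)}}.
\]
The right-hand side is visibly continuous on $U_v\times\ZZ_\ell^m$ (the denominator lies in $1+\ell^v\ZZ_\ell$, hence is a unit) and evaluates to $g(s)$ at $x=1$, providing the desired extension $G$ and settling both~(\ref{it:Winterpol1}) and~(\ref{it:Winterpol2}). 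The main obstacle is coordinating three distinct expansions---formal in $\qq-1$ from the definition of $\MM$, formal in $y$, and $p$-adically convergent in $y$---and verifying that the order of vanishing at $x=1$ transfers faithfully across them; once that bookkeeping is in place, the identity $x^c-1=cy\,\phi(cy)$ is what lets the algebraic prefactor absorb the singularities of $W$ at $x=1$.
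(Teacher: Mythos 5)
Your proposal is correct and follows essentially the same route as the paper's proof: where the paper expands in the variable $x-1$ via the $\ell$-adic binomial series and uses the factorisation $x^e-1=e(x-1)(1+h(x,e))$, you work in the coordinate $y=\log x$ and use $x^c-1=cy\,\phi(cy)$, but in both arguments the hypothesis $W\in\MM$ enters identically (to show that the Laurent polynomial $\prod(\qq^a\tee^b-1)^{e(a,b)}W$ evaluated at $(\qq,\qq^{-\ess})$ vanishes to order $E=\sum e(a,b)$ with leading coefficient $g(\ess)$), and the extension is the quotient of the resulting convergent series by a product of units tending to $1$ as $x\to 1$. Your extra bookkeeping step transferring the order of vanishing from the $(\qq-1)$-expansion to the $y$-expansion is routine and adequately justified; only minor slips occur (the monomials are $c\exp((a-\bil b s)y)$, not $(a+\bil b s)$, and for $\ell=2$ the factors $\phi((a-\bil b s)y)$ lie in $1+\ell^{v-1}\ZZ_\ell$ rather than $1+\ell^{v}\ZZ_\ell$, which is immaterial since only their being units with value $1$ at $y=0$ is needed).
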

\begin{proof}
  First, there exists a continuous map $h\colon U_v \times \ZZ_\ell \to \ell
  \ZZ_\ell$ such that (a) $x^e - 1 = e (x-1) \dtimes (1+h(x,e))$ for all $x\in
  U_v$ and $e\in \ZZ_\ell$ and (b) $h(1,\blank) = 0$.
  Indeed, for $e\not= 0$, the $\ell$-adic binomial series \cite[Cor.\
  4.2.18]{Coh07} yields $ x^e - 1 = e(x-1)\dtimes \left( 1 + \sum_{d=1}^\infty
    \binom{e-1}{d} \dtimes \frac{(x-1)^d}{d+1} \right) $ and $\sup_{x\in
    U_v,e\in \ZZ_\ell} \abs{\binom{e-1}{d} \dtimes \frac{(x-1)^d}{d+1}}_\ell \le
  \sup_{x \in U_v}\!\left( (d+1) \dtimes \abs{x-1}_\ell^d\right) \le
  \frac{d+1}{3^d} < 1$ for $d\ge 1$.

  Let $f(\qq,\tee) = \prod_{(a,b)\in \Lambda} (\qq^a\tee^b-1)^{e(a,b)}\dtimes
  W(\qq,\tee)$, say $f(\qq,\tee) = \sum_{(a,b)\in \ZZ^{1+m}} c_{ab}\qq^a\tee^b$
  with $c_{ab}\in \QQ$, almost all of which are $0$.
  Let $M = \sum_{(a,b)\in \Lambda}e(a,b)$.
  As an element of $\SuperMM$, the series $W(\qq,\qq^{-\ess})$ has nonnegative
  $(\qq-1)$-adic valuation.  Hence, $u^\sharp := f(\qq,\tee)(\qq-1)^{-M}$ is
  contained in $\SuperMM$ and thus in $\QQ[\ess]\llb \qq-1\rrb$ since
  $f(\qq,\qq^{-\ess})\in \QQ[\ess]\llb \qq-1\rrb$.  Note that $W(\qq,\tee) =
  u^\sharp \dtimes \prod_{(a,b)\in \Lambda}
  \left(\frac{\qq-1}{\qq^a\tee^b-1}\right)^{e(a,b)}.$ Thus, $g(\ess)$ in part
  (\ref{it:Winterpol2}) of the lemma coincides with $\red{u^\sharp} \in
  \QQ[\ess]$.  The $\ell$-adic binomial series shows that the map
  $U_v\!\setminus\!\{1\}\times \ZZ^m \to \QQ_\ell, (x,s) \mapsto
  f(x,x^{-s_1},\dotsc,x^{-s_m})(x-1)^{-M}$ admits the continuous extension $
  F\colon U_v\times \ZZ_\ell^m\to \QQ_\ell, (x,s) \mapsto \sum_{d=0}^\infty
  \left(\sum_{(a,b)\in \Lambda} c_{ab}\binom{a-\bil b s}{d+M}\right) (x-1)^d $
  which satisfies $F(1,s) = \sum_{(a,b)\in\Lambda}c_{ab}\binom{a-\bil b s}{M} =
  \red{u^\sharp(s)}$ for $s\in \ZZ_\ell^m$.
  We thus obtain an extension of $G_0$
  of the desired form, namely $(x,s) \mapsto F(x,s) \dtimes \prod_{(a,b)\in
    \Lambda} (1+h(x,a-\bil b s))^{-e(a,b)}$.
\end{proof}

\subsection{Local zeta functions of Denef type}
\label{ss:denef_type}

Throughout this subsection, let $k$ be a number field with ring of integers
$\fo$.

\begin{defn}
  \label{d:local_system}
  By a \emph{system of local zeta functions} (in $m$ variables) over $k$, we
  mean a family $\Zeta = (\Zeta_K)$ such that
  \begin{enumerate}
  \item $K$ ranges over all non-Archimedean local fields endowed with an
    embedding $k \subset K$,
  \item
    \label{d:local_system2}
    each $\Zeta_K$ is a meromorphic function of the form
    $$\Zeta_K(s_1,\dotsc,s_m) = W_K(q_K^{-s_1},\dotsc,q_K^{-s_m})$$ for 
    complex $s_1,\dotsc,s_m$ and a rational function $W_K(\tee_1,\dotsc,\tee_m)
    \in \QQ(\tee_1,\dotsc,\tee_m)$, where $q_K$ denotes the residue field size
    of $K$, and
  \item $\Zeta_{K_1} = \Zeta_{K_2}$ whenever non-Archimedean local fields
    $K_1,K_2 \supset k$ are topologically isomorphic over $k$.
  \end{enumerate}
\end{defn}

The final condition is included to avoid set-theoretic difficulties.

\begin{defn}
  \label{d:equivalent}
  Let $\Zeta$ and $\Zeta'$ be systems of local zeta functions over $k$.  We say
  that $\Zeta$ and $\Zeta'$ are \emph{equivalent} if there exists a finite set
  $S$ of primes of $k$ such that $\Zeta^{\phantom o}_K = \Zeta'_K$ whenever
  $\fP_K \cap \fo \not\in S$, where $\fP_K$ is the maximal ideal of the
  valuation ring of $K$.
\end{defn}

We now consider systems of local zeta functions given (up to equivalence) by
formulae similar to those obtained by Denef for Igusa's local zeta function
\cite[Thm\ 3.1]{Den87}.  Thus, let $I$ be a finite set and suppose that for each
$i\in I$, we are given a variety $V_i$ over $k$ and a rational function
$W_i(\qq,\tee_1,\dotsc,\tee_m) \in \MM$ (see Definition~\ref{d:MM}).  In the
cases of interest to us, each $V_i$ will be quasi-projective, i.e.\ a locally
closed (but not necessarily reduced) subscheme of projective space over $k$.
Given these data, we obtain an associated system $\Denef = (\Denef_K)$ of local
zeta functions as follows: for each prime $\fp$ of $k$ and non-Archimedean local
field $K \supset k$ such that the maximal ideal $\fP_K$ of its valuation ring
$\fO_K$ satisfies $\fP_K \cap \fo = \fp$, we let
\[
\Denef_K(s_1,\dotsc,s_m) := \sum_{i\in I} \noof{\bar V_i(\fO_K/\fP_K)} \dtimes
W_i(q_K^{\phantom{s_1}},q_K^{-s_1},\dotsc,q_K^{-s_m}),
\]
where $q_K$ is the residue field size of $K$ and $\bar\dtimes$ denotes
``reduction modulo $\fp$''.  A rigorous definition of ``reduction modulo $\fp$''
is a subtle matter, see \cite[\S 2]{Den87} for Denef's definition.  For our
purposes, the following naive approach will suffice.  Namely, we choose, once
and for all, for each $i\in I$, a model $X_i$ of $V_i$ over $\fo$, i.e.\ a
separated scheme $X_i$ of finite type over $\fo$ with $(X_i)_k \approx_k V_i$.
Given $\fp$, we then define $\bar V_i := X_i \times_{\Spec(\fo)}
\Spec(\fo/\fp)$.  While this definition of $\bar V_i$ is not intrinsic, for a
fixed rational prime $\ell$, the following numbers do not depend on our choice
of $X_i$, provided we are willing to ignore finitely many $\fp$:
\begin{enumerate}
\item[(N1)] the number of rational points of $\bar V_i$ over finite field
  extensions of $\fo/\fp$ and
\item[(\textlabel{N2}{redmodp2})] the $\ell$-adic Euler characteristic
  $\Euler_c((\bar V_i)_{\overline{\fo/\fp}},\QQ_\ell)$ with compact support of
  $\bar V_i$, taken after base changing to an algebraic closure
  $\overline{\fo/\fp}$ of $\fo/\fp$---indeed, $\Euler_c((\bar
  V_i)_{\overline{\fo/\fp}},\QQ_\ell) = \Euler_c((V_i)_{\bar k},\QQ_\ell)$;
\end{enumerate}
see e.g.\ \cite[\S 4.8.2]{Ser12} and cf.\ \cite[7.6]{dSL04}, \cite[\S
1.3]{Loe09}, \cite[\S 5.3]{Nic10}.  In particular, up to equivalence, $\Denef$
only depends on the collection
$\left(V_i,W_i(\qq,\tee_1,\dotsc,\tee_m)\right)_{i\in I}$.

\begin{rem}
  \label{r:artin}
  By Artin's comparison theorem and invariance of $\ell$-adic cohomology under
  extensions of algebraically closed fields (see \cite{Kat94} for both), for
  almost all $\fp$, the Euler characteristic in (\ref{redmodp2}) coincides with
  the topological Euler characteristic (with or without compact support
  \cite[A4, Prop.]{KP85})\!\! of the $\CC$-analytic space associated with $V_i$
  for any embedding $k\subset \CC$.
\end{rem}

\begin{defn}
  \label{d:denef_type}
  We say that a system of local zeta functions over $k$ is of \emph{Denef type}
  if it is equivalent to a system of the form $\Denef$ just constructed.
\end{defn}

For the following examples of zeta functions, we do not distinguish between
defining integrals (or Dirichlet series) and associated meromorphic
continuations.  Moreover, as we are only interested in systems of local zeta
functions up to equivalence, the ``explicit formulae'' cited below provide the
rationality properties required by
Definition~\ref{d:local_system}(\ref{d:local_system2}); we note that, instead of
proving rationality using model-theoretic techniques, the aforementioned
explicit formulae can be modified to cover exceptional primes, see \cite[\S
4.3]{AKOV13}.

\begin{exs}
  \label{exs:denef_type}
  \quad As before, let $k$ be a number field with ring of integers $\fo$.  We
  follow the conventions for local fields in Notation~\ref{not:local} with
  subscripts $(\blank)_K$ added for clarity.
  \begin{enumerate}
  \item
    \label{exs:denef_type1}
    The prime example of a system of local zeta functions of Denef type, and the
    reason for our choice of terminology, is given by Igusa's local zeta
    function.  Thus, for $f\in k[\XX]$, the family $\Zeta_f = (\Zeta_{f,K})$
    given by $\Zeta_{f,K}(s) := \int_{\fO_K^n}\abs{f(\xx)}_K^s\dd\mu_K(\xx)$ is
    of Denef type.  For a proof, take Denef's ``explicit formula'' \cite[Thm\
    3.1]{Den87} and apply \cite[Prop.\ 2.3, Thm\ 2.4]{Den87} to pass from
    completions $k_{\fp}$ of $k$ to finite extensions; see also \cite[Rem.\ 2.3
    \& \S 3]{Den91b}.
  \item
    \label{exs:denef_type2}
    More generally, for non-empty finite $\ff \subset k[\XX]$, the family
    $\Zeta_{\ff}$ with $\Zeta_{\ff,K}(s) :=
    \int_{\fO_K^n}\norm{\ff(\xx)}_K^s\dd\mu_K(\xx)$ is of Denef type.  Here, one
    takes the formula of Veys and \Zuniga{}-Galindo \cite[Thm\ 2.10]{VZG08} and
    uses stability under base change as in (\ref{exs:denef_type1}).
  \item
    \label{exs:denef_type3}
    Let $\cA$ be a non-associative $\fo$-algebra which is free of finite rank
    $d$ as an $\fo$-module.  Define a family $\Zeta_{\cA}$ by $\Zeta_{\cA,K}(s)
    := (1-q_K^{-1})^d \zeta_{\cA_{\fO_K}}(s)$ (see
    Definition~\ref{d:subzeta}(\ref{d:subzeta1})).  Similarly, for a free
    $\fo$-module $M$ of rank $d$ and a subalgebra $\cE$ of $\End_{\fo}(M)$,
    define $\Zeta_{\cE \acts M}$ by $\Zeta_{\cE\acts M,K}(s) := (1-q_K^{-1})^d
    \zeta_{\cE_{\fO_K}\acts M_{\fO_K}}(s)$ (see
    Definition~\ref{d:subzeta}(\ref{d:subzeta2})).  Then $\Zeta_{\cA}$ and
    $\Zeta_{\cE\acts M}$ are both of Denef type, see Theorem~\ref{thm:subdenef}.
  \item
    \label{exs:denef_type4}
    Taking $\Zeta_K(s_1,\dotsc,s_m)$ to be the integral in
    Proposition~\ref{prop:monomial_integral}, we obtain a system $(\Zeta_K)$ of
    local zeta functions of Denef type.  Indeed, the remarks following
    Theorem~\ref{thm:genfun} show that $(1-\qq^{-1})^n
    \cZ^{\cC_0,\cP_1,\dotsc,\cP_m}(\qq^{-1},\tee_1,\dotsc,\tee_m) \in \MM$ (see
    Definition~\ref{d:MM}).
  \item Further examples of multivariate local zeta functions of Denef type are
    provided by \cite[Thm\ 2.1]{Vol10}.  For the stability under base change,
    one again argues as in (\ref{exs:denef_type1}), while the interpretation of
    rational functions as elements of $\MM$ is similar to
    (\ref{exs:denef_type4}).
  \item
    \label{exs:denef_type6}
    Let $\cC_0 \subset \Orth^n$ be a half-open rational cone and let $\ff_0
    \subset \fo[\XX^{\pm 1}]$ and $\ff_1,\dotsc,\ff_m \subset \fo[\XX] =
    \fo[X_1,\dotsc,X_n]$ be non-empty finite sets with $0\not\in \ff := \ff_0
    \cup\dotsb \cup \ff_m$.  Disregarding finitely many primes $\fP_K \cap \fo$
    of $k$, if $\ff := \ff_0 \cup \dotsb \cup \ff_m$ is non-degenerate relative
    to $\cC_0$ (see Definition~\ref{d:nd}(\ref{d:nd1})), then
    Theorem~\ref{thm:evaluate} shows that $\Zeta^{\cC_0,\ff_0,\dotsc,\ff_m} :=
    \Bigl(\Zeta^{\cC_0,\ff_0,\dotsc,\ff_m}_K\Bigr)$ is a system of local zeta
    functions.  While $\Zeta^{\cC_0,\ff_0,\dotsc,\ff_m}$ is indeed of Denef
    type, Theorem~\ref{thm:evaluate} falls short of making this explicit because
    the rational functions $ \qq^{-n}(\qq-1)^{\card{\bm g}} \cZ^{\cC^\tau_0(\bm
      g), \cP^\tau_1(\bm g),\dotsc,\cP^\tau_m(\bm g)}(\qq^{-1},
    \tee_1,\dotsc,\tee_m) $ in Theorem~\ref{thm:evaluate} need not belong to
    $\MM$.  This ``flaw'' is a minor one which we will fix in
    Lemma~\ref{lem:nd_denef}.
    
    Although we will not use it in the following, we note that
    $\Zeta^{\cC_0,\ff_0,\dotsc,\ff_m}$ is a system of local zeta functions of
    Denef type without any non-degeneracy assumptions on $\ff$, as can be shown
    using Remark~\ref{rem:principal} and arguments similar to
    (\ref{exs:denef_type2}) and (\ref{exs:denef_type3}).
  \end{enumerate}
\end{exs}

\subsection{Topological zeta functions as limits of local ones}
Let $\Zeta$ be a system of local zeta functions in $m$ variables over a number
field $k$ (see Definition~\ref{d:local_system}).  Suppose that $\Zeta$ is of
Denef type (see Definition~\ref{d:denef_type}).  For a prime $\fp$ of $k$ and
$f\ge 1$, let $k_{\fp}^{(f)}$ denote the unramified extension of degree $f$ of
the completion $k_{\fp}$ of $k$ at $\fp$.  Write $\Zeta_{\fp}(f;s_1,\dotsc,s_m)
:= \Zeta_{k_{\fp}^{(f)}}(s_1,\dotsc,s_m)$.  Then, unless $\fp$ belongs to some
finite exceptional set of primes of $k$, for any local field $K\supset k_{\fp}$
with residue class degree $f$ over $k_{\fp}$, we have $\Zeta_K(s_1,\dotsc,s_m) =
\Zeta_{\fp}(f;s_1,\dotsc,s_m)$.

\begin{thm}[{Cf.\ \cite[(2.4)]{DL92}}]
  \label{thm:top}
  Let $\Zeta$ be a system of local zeta functions in $m$ variables and of Denef
  type over a number field $k$.  Then:
  \begin{enumerate}
  \item
    \label{thm:top1}
    There exists a finite union $\cH$ of affine hyperplanes in $\AA_{\ZZ}^{\!m}$
    such that for each rational prime $\ell$ and almost all primes $\fp$ of $k$
    (depending on $\ell$ and $\Zeta$), there exists $d \ge 1$ such that
    $$
    \NN \times \ZZ^m\!\setminus\!\cH(\ZZ) \to \QQ, (f,s) \mapsto
    \Zeta_{\fp}(df;s)
    $$
    extends to a continuous map $\ZZ_\ell^{\phantom o} \times
    \ZZ_\ell^m\!\setminus\!\cH(\ZZ_\ell) \to \QQ_\ell$.
  \item
    \label{thm:top2}
    Notation as in (\ref{thm:top1}).  There exists a rational function
    $\Zeta_{\topo}(\ess)\in \QQ(\bm s_1,\dotsc,\ess_m)$ such that for all $\ell$
    and almost all primes $\fp$ (depending on $\ell$ and $\Zeta$), there exists
    $d \ge 1$ such that $\lim\limits_{\substack{f\to 0,\\f\in \NN}}
    \Zeta_{\fp}(df;s) = \Zeta_{\topo}(s)\in \QQ$
    for all $s\in \ZZ^m\!\setminus\! \cH(\ZZ)$, the limits being $\ell$-adic.
  \item
    \label{thm:top3}
    Let $\Denef$ be defined in terms of $k$-varieties $V_i$ and
    $W_i(\qq,\tee)\in \MM$ as in Definition~\ref{d:denef_type}.  Then
    $\Denef_{\topo}(\ess) = \sum_{i\in I} \Euler(V_i(\CC)) \dtimes
    \red{W_i(\ess)}$ (see Notation~\ref{not:red}).
  \end{enumerate}
\end{thm}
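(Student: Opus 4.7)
The plan is to invoke Definition~\ref{d:denef_type} to reduce to the case $\Zeta = \Denef$, which is permissible because we may ignore a finite exceptional set of primes. Fix such a $\fp$, set $q := q_\fp$, abbreviate $K_f := k_\fp^{(f)}$, and write
\[
\Denef_{\fp}(f;s) = \sum_{i \in I} \noof{\bar V_i(\fO_{K_f}/\fP_{K_f})} \dtimes W_i(q^f, q^{-fs_1}, \dotsc, q^{-fs_m}).
\]
The strategy is to produce a continuous $\ell$-adic interpolation in $f$ of each summand, and to evaluate at $f = 0$ to obtain the claimed formula.

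The rational-function factor is handled directly by Proposition~\ref{prop:Winterpol}: for each $W_i \in \MM$, it provides a hyperplane union $\cH_i$ and a continuous extension of $W_i(x, x^{-s_1}, \dotsc, x^{-s_m})$ to $U_2 \times \ZZ_\ell^m \!\setminus\! \cH_i(\ZZ_\ell)$ with value $\red{W_i(s)}$ at $x = 1$. For the point-count factor, I would invoke the Grothendieck--Lefschetz trace formula (valid for almost all $\fp$, after discarding further primes to ensure model-independence and $\fp \nmid \ell$) to write
\[
\noof{\bar V_i(\fO_{K_f}/\fP_{K_f})} = \sum_{j=0}^{2\dim V_i} (-1)^j \sum_\nu \alpha_{ij\nu}^f,
\]
where the $\alpha_{ij\nu} \in \overline{\QQ_\ell}^\times$ are the geometric Frobenius eigenvalues on $H^j_c((V_i)_{\bar k}, \QQ_\ell)$, which are $\ell$-adic units.

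Inside a finite extension of $\QQ_\ell$ containing all (finitely many) $\alpha_{ij\nu}$ and $q$, the principal-unit subgroup at level $2$ has finite index, so I may choose $d \ge 1$ with $\alpha_{ij\nu}^d$ and $q^d$ all lying in this subgroup. The $\ell$-adic binomial series then extends $f \mapsto \alpha_{ij\nu}^{df}$ and $f \mapsto q^{df}$ continuously to $\ZZ_\ell$, each with value $1$ at $f = 0$. Substituting $x = q^{df}$ into the continuous extension of $W_i$ and multiplying by the extended point-count yields a continuous extension of $f \mapsto \Denef_{\fp}(df;s)$ to $\ZZ_\ell \times \ZZ_\ell^m \!\setminus\! \cH(\ZZ_\ell)$ with $\cH := \bigcup_i \cH_i$, proving (\ref{thm:top1}). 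At $f = 0$ the point-count factor becomes $\sum_j (-1)^j \dim H^j_c((V_i)_{\bar k}, \QQ_\ell) = \Euler(V_i(\CC))$ (via Remark~\ref{r:artin}), and the $W_i$-factor becomes $\red{W_i(s)}$, so the limit equals $\sum_i \Euler(V_i(\CC)) \dtimes \red{W_i(s)}$. This is patently a rational function in $\ess$ with coefficients in $\QQ$ independent of $\fp$, $\ell$, and $d$, which simultaneously establishes (\ref{thm:top2}) and (\ref{thm:top3}).

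The main obstacle I expect is the simultaneous $\ell$-adic control demanded by the argument: one must trap all Frobenius eigenvalues on all cohomology groups of all $V_i$ together with $q$ in a single principal-unit neighbourhood of $1$. This is only tractable because $I$ is finite, cohomology vanishes in high degree, and $\fp$ is assumed coprime to $\ell$; the resulting $d$ genuinely depends on $\fp$ and $\ell$, matching the weak uniformity in the theorem statement. A secondary bookkeeping point is the passage from $\ell$-adic to complex Euler characteristics, handled by Artin's comparison as noted in Remark~\ref{r:artin}.
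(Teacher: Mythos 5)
Your argument is correct and follows essentially the same route as the paper: reduce to $\Denef$, interpolate the $W_i$-factors via Proposition~\ref{prop:Winterpol}, interpolate the point counts $\ell$-adically via the Grothendieck trace formula, and evaluate at $f=0$ using Remark~\ref{r:artin}; the paper merely packages the Frobenius step as Lemma~\ref{lem:NXp}, working with the operators $\gamma_i$ inside a pro-$\ell$ subgroup of $\GL(W_i)$ rather than with their eigenvalues. The only (cosmetic) slip is that in a ramified extension of $\QQ_\ell$ containing the eigenvalues, principal units of level $2$ need not lie in the region of convergence of the binomial series, so one should take the level large enough relative to the ramification, or argue with the pro-$\ell$ group structure directly as in Lemma~\ref{lem:NXp}.
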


Note that since $\ZZ^m\!\setminus\!\cH(\ZZ)$ is Zariski-dense in $\CC^m$, the
rational function $\Zeta_{\topo}(\ess)$ in (\ref{thm:top2}) is uniquely
determined (and independent of $\cH$).  Furthermore, by construction,
$\Zeta_{\topo}(\ess)$ only depends on the equivalence class of $\Zeta$ in the
sense of Definition~\ref{d:equivalent}.

\begin{defn}
  \label{d:topzeta}
  Let $\Zeta$ be a system of local zeta functions of Denef type over a number
  field.  The \emph{topological zeta function} associated with $\Zeta$ is the
  rational function $\Zeta_{\topo}(\ess)$ in
  Theorem~\ref{thm:top}(\ref{thm:top2}).
\end{defn}

This definition generalises the case of Igusa's local zeta function in
\cite{DL92}.  Denef and Loeser apparently chose the name owing to the appearance
of the topological Euler characteristics of the $\CC$-analytic spaces $V_i(\CC)$
(defined with respect to an arbitrary embedding $k\subset \CC$) in
Theorem~\ref{thm:top}(\ref{thm:top3}).

The following result needed to derive Theorem~\ref{thm:top} can be found in
\cite[(2.4)]{DL92}; we include a short group-theoretic proof.
\begin{lemma}
  \label{lem:NXp}
  Let $V$ be separated scheme of finite type over $\FF_q$.  Let $\ell$ be a
  rational prime with $\ndivides \ell q$.  There exists $d \in \NN$ such that
  $\NN \to \NN_0, f\mapsto \noof{V(\FF_{q^{d f}})}$ extends to a continuous map
  $\alpha\colon \ZZ_\ell\to\ZZ_\ell$ which satisfies $\alpha(0) =
  \Euler_c(V_{\bar \FF_q},\QQ_\ell)$.
\end{lemma}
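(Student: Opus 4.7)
The plan is to apply the Grothendieck–Lefschetz trace formula and then extend each Frobenius eigenvalue power continuously via the $\ell$-adic binomial series, very much in parallel to the argument used in Lemma~\ref{lem:Winterpol}.

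First, fix an embedding $\bar\QQ_\ell \hookrightarrow \CC$ and let $F$ be the geometric Frobenius acting on $H^i_c := H^i_c(V_{\bar\FF_q},\QQ_\ell)$ for $i = 0,\dotsc,2\dim V$. The trace formula \cite{Kat94} gives
\[
\noof{V(\FF_{q^n})} = \sum_{i} (-1)^i \Tr\!\bigl(F^n \mid H^i_c\bigr)
= \sum_{i,j} (-1)^i \lambda_{ij}^n,
\]
where the $\lambda_{ij}\in \bar\QQ_\ell$ are the (Frobenius) eigenvalues on $H^i_c$, listed with multiplicity. The key property I would invoke is that each $\lambda_{ij}$ is an $\ell$-adic unit; for smooth proper $V$ this is classical, and the general case follows by Nagata compactification, the long exact sequence for the pair $(\bar V, \bar V\setminus V)$, and induction on $\dim V$.

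Choose a finite extension $E \subset \bar\QQ_\ell$ of $\QQ_\ell$ containing every $\lambda_{ij}$, with ring of integers $\fO_E$ and maximal ideal $\fP_E$. Since each $\lambda_{ij}\in \fO_E^\times$ and the torsion subgroup of $\fO_E^\times$ is finite, there exists $d\in\NN$ such that $\lambda_{ij}^d\in U := 1 + \fP_E'$ for all $i,j$, where $\fP_E' = \fP_E$ if $\ell$ is odd and $\fP_E' = \fP_E^2$ if $\ell = 2$ (to make the binomial series converge, cf.\ the proof of Lemma~\ref{lem:Winterpol}). For $u\in U$, the $\ell$-adic binomial series
\[
u^f = \bigl(1 + (u-1)\bigr)^f = \sum_{k=0}^\infty \binom{f}{k}(u-1)^k
\]
converges on $\ZZ_\ell$ in the variable $f$ and defines a continuous map $\ZZ_\ell \to U$ extending $f\mapsto u^f$ on $\NN$. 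Applying this with $u = \lambda_{ij}^d$, we obtain continuous maps $\beta_{ij}\colon \ZZ_\ell \to \fO_E^\times$ with $\beta_{ij}(f) = \lambda_{ij}^{df}$ for $f\in \NN$ and $\beta_{ij}(0) = 1$.

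Setting $\alpha(f) := \sum_{i,j} (-1)^i \beta_{ij}(f)$ yields a continuous map $\ZZ_\ell \to E$ which agrees with $f\mapsto \noof{V(\FF_{q^{df}})}$ on $\NN$. Since this common value lies in $\ZZ$ for $f\in \NN$ and $\NN$ is dense in $\ZZ_\ell$, the image of $\alpha$ lies in the $\ell$-adic closure of $\ZZ$ inside $E$, which is $\ZZ_\ell$; hence $\alpha\colon \ZZ_\ell \to \ZZ_\ell$ as required. Finally, since each $\beta_{ij}(0) = 1$,
\[
\alpha(0) = \sum_{i} (-1)^i \dim_{\QQ_\ell} H^i_c(V_{\bar\FF_q},\QQ_\ell) = \Euler_c(V_{\bar\FF_q},\QQ_\ell).
\]
The main (but standard) obstacle is the $\ell$-adic unit property of the eigenvalues on $H^i_c$ for arbitrary separated $V$ of finite type; granting this, the rest is just bookkeeping with the binomial series, exactly as in \S\ref{ss:binomial}.
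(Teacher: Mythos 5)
Your argument is correct, but it takes a different route from the paper's. The paper's proof also starts from Grothendieck's trace formula, but then stays entirely group-theoretic: since the Galois action of the compact group $\Gal(\bar\FF_q/\FF_q)$ on $W_i = \HH^i_c(V_{\bar\FF_q},\QQ_\ell)$ is continuous and $\GL(W_i)$ is $\ell$-adic analytic, some power $\gamma_i^d$ of the Frobenius automorphism lies in a pro-$\ell$ subgroup, whence $f\mapsto \gamma_i^{df}$ interpolates continuously to $\ZZ_\ell$; taking traces finishes the proof, with no mention of eigenvalues, finite extensions of $\QQ_\ell$, or semisimplicity. You instead pass to the Frobenius eigenvalues, which forces you to enlarge to a finite extension $E\supset\QQ_\ell$ and to invoke the fact that these eigenvalues are $\ell$-adic units before applying the binomial series eigenvalue by eigenvalue. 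That fact is indeed true and standard, so your proof goes through; but note that the cleanest justification of it is precisely the observation underlying the paper's argument: the continuous action of the compact Galois group stabilises a $\ZZ_\ell$-lattice in $\HH^i_c$, so Frobenius is an automorphism of that lattice and its characteristic polynomial has $\ZZ_\ell$-coefficients with unit constant term. Your proposed detour via Nagata compactification, the long exact sequence and induction on dimension is both unnecessary and, as sketched, incomplete (the proper but singular compactification is not covered by the ``classical'' smooth proper case, so that step would need alterations or a similar device). In short: your route works and makes the eigenvalue structure explicit, at the cost of an extra input that the paper's more economical argument sidesteps.
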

\begin{proof}
  We use some basic properties of $\ell$-adic cohomology as e.g.\ explained in
  \cite{Kat94}.
  Let $\Gamma$ be the absolute Galois group of $\FF_q$ and let $\sigma \in
  \Gamma$ be the arithmetic Frobenius of $\FF_q$.
  It is well-known \cite[Thm\ 4.4]{Ser12} that the geometric Frobenius of $V$
  and $\sigma^{-1}$ induce the same automorphism, $\gamma_i$ say, of $W_i :=
  \HH^i_c(V_{\bar \FF_q},\QQ_\ell)$ for every $i\ge 0$.  Hence, by
  Grothendieck's trace formula, $\noof{V(\FF_{q^f})} = \sum_i (-1)^i
  \trace(\gamma_i^{f})$ for all $f\ge 1$.  Since the natural action $\Gamma \to
  G_i := \GL(W_i)$ is continuous and $G_i \approx \GL_{\dim(W_i)}(\QQ_\ell)$ is
  $\ell$-adic analytic, there exists $d \in \NN$ such that each $\gamma_i^d$ is
  contained in a pro-$\ell$ subgroup of $G_i$.
  It follows that each map $\ZZ \to G_i, f \mapsto \gamma_i^{d f}$ admits a
  continuous extension $\ZZ_\ell\to G_i$.
\end{proof}

\begin{proof}[Proof of Theorem~\ref{thm:top}]
  Let $\Denef$ be as in (\ref{thm:top3}) and suppose that $\Zeta$ is equivalent
  to $\Denef$.  Fix an arbitrary $\ell$.  We may choose a single $\cH$
  (independent of $\ell$) such that the conclusion of
  Proposition~\ref{prop:Winterpol} holds for all $W_i(\qq,\tee)$ (with
  $\cH(\ZZ_\ell)$ in place of $\cH$).  Let $\fp$ be a prime of $k$ and let $q$
  be the size of the residue field.  Suppose that $\ndivides \ell q$.  There
  exists $d\ge 1$ such that the conclusion of Lemma~\ref{lem:NXp} holds for all
  $\bar V_i$ and, additionally, $\abs{q^d - 1}\le \ell^{-2}$.
  Part~(\ref{thm:top1}) follows.  Next, $ \lim_{f\to 0} \Denef_{\fp}(df;s) =
  \sum_{i\in I} \chi_c((\bar V_i)_{\overline{\fo/\fp}},\QQ_\ell) \dtimes
  \red{W_i(s)} $ for $s\in \ZZ^m\!\setminus\!\cH(\ZZ)$.  As $\chi_c((\bar
  V_i)_{\overline{\fo/\fp}},\QQ_\ell) = \chi(V_i(\CC))$ for almost all $\fp$
  (see Remark~\ref{r:artin}), parts (\ref{thm:top2})--(\ref{thm:top3}) follow.
\end{proof}

\begin{rem}
  \label{r:subdenef}
  As we will now explain, passing from local to topological zeta functions is
  compatible with suitable affine specialisations.  Thus, let $\Denef$ be a
  system of local zeta functions of Denef type over a number field $k$, arising
  from varieties $V_i$ and $W_i(\qq,\tee) \in \MM(\qq;\tee_1,\dotsc,\tee_m)$ as
  in Definition~\ref{d:denef_type}.  Let $\bm c \in \ZZ^m$, $A \in \Mat_{m\times
    r}(\ZZ)$ with rows $A_1,\dotsc,A_m$, and let $\tilde\tee :=
  (\tilde\tee_1,\dotsc,\tilde\tee_r)$ and $\tilde{\bm s} := (\tilde{\bm
    s}_1,\dotsc,\tilde{\bm s}_r)$.  Suppose that $\prod_{(a,b)\in \Lambda}
  (\qq^a\tee^{b}-1)^{e(a,b)}\dtimes W_i(\qq,\tee)\in \QQ[\qq^{\pm 1}, \tee^{\pm
    1}]$ for all $i\in I$, where $\Lambda = \ZZ^{1+m}\!\setminus\!\{0\}$ and
  $e\colon \Lambda \to \NN_0$ has finite support.  If $\qq^{a+\bil b
    c}\tilde\tee^{b A}\not= 1$ for $e(a,b) > 0$, we may define
  $W_i'(\qq,\tilde\tee) := W_i(\qq,\qq^{c_1}\tilde\tee^{A_1}, \dotsc,
  \qq^{c_m}\tilde\tee^{A_m})$ and it is easily verified that each
  $W_i'(\qq,\tilde\tee)$ belongs to $\MM(\qq;\tilde\tee)$.  We thus obtain a
  system of local zeta functions in $r$ variables, $\Denef'$ say, associated
  with the $V_i$ and $W_i'(\qq,\tilde\tee)$.  We then have
  $\Denef'_{\topo}(\tilde{\bm s}_1,\dotsc,\tilde{\bm s}_r) = \Denef_{\topo}(
  \bil {A_1}{\tilde{\bm s}} - c_1, \dotsc, \bil {A_m}{\tilde{\bm s}} - c_m)$.
\end{rem}

\subsection{Topological subalgebra and submodule zeta functions}
\label{ss:topsub}

Let $k$ be a number field with ring of integers $\fo$.  Let $\cA$ be a
non-associative $\fo$-algebra which is free of finite rank $d$ as an
$\fo$-module.  Let $M$ be a free $\fo$-module of rank $d$ and let $\cE$ be an
associative subalgebra of $\End_{\fo}(M)$.  In
Example~\ref{exs:denef_type}(\ref{exs:denef_type3}), we defined systems of local
zeta functions $\Zeta_{\cA}$ and $\Zeta_{\cE \acts M}$ by rescaling the
subalgebra and submodule zeta functions associated with $\cA$ and $\cE$ acting
on $M$, respectively.

\begin{thm}[{Cf.\ \cite[\S\S 2--3]{dSG00}}]
  \label{thm:subdenef}
  $\Zeta_{\cA}$ and $\Zeta_{\cE \acts M}$ are both of Denef type.
\end{thm}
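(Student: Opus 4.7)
The plan is to combine the integral representation from Theorem~\ref{thm:coneint} with the resolution-based ``explicit formula'' for such integrals derived in \cite[\S\S 2--3]{dSG00}, and then verify that the resulting rational functions lie in $\MM$. First, I would apply Theorem~\ref{thm:coneint}(\ref{thm:coneint1}) to obtain a finite family $\ff=(f_i)_{i\in I}$ of nonzero Laurent polynomials in $\fo[X_{ij}^{\pm 1}:i\le j]$ with
\[
\Zeta_{\cA,K}(s) \;=\; \int_{\{\xx\in\Tr_d(\fO_K)\,:\,\norm{\ff(\xx)}_K\le 1\}} \abs{x_{11}}_K^{s-1}\dotsb\abs{x_{dd}}_K^{s-d}\,\dd\mu_K(\xx)
\]
for every $p$-adic field $K\supset k$, and likewise for $\Zeta_{\cE\acts M,K}$ via Theorem~\ref{thm:coneint}(\ref{thm:coneint2}). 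The normalisation $(1-q_K^{-1})^d$ absorbed into $\Zeta_{\cA,K}$ exactly matches the Haar measure of the torus $\Torus^d(\fO_K)$ swept out by the diagonal entries $x_{11},\dotsc,x_{dd}$, which are automatically nonzero on the domain of integration since $\ff$ consists of Laurent polynomials.

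Next, I would rewrite each $f_i$ as $\XX^{-\gamma_i}\tilde f_i$ with $\tilde f_i\in \fo[X_{ij}:i\le j]$ and choose a principalisation $h\colon Y\to\AA_k^{d(d+1)/2}$ of the divisor cut out by $x_{11}\dotsb x_{dd}\cdot\prod_i\tilde f_i$. Index the irreducible components of $h^{-1}(D)$ by $j\in T$, each carrying numerical data $(N_{j,i},\nu_j,\mu_j)$ that records the multiplicity of $E_j$ in $h^\ast$ of the respective coordinate, the multiplicity in $h^\ast\tilde f_i$, and the ramification exponent. For $J\subset T$ with $\bigcap_{j\in J}E_j\ne\emptyset$, let $E_J^\circ=\bigcap_{j\in J}E_j\setminus\bigcup_{j\notin J}E_j$. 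A $\fP$-adic change of variables (cf.\ \cite[\S 3]{dSG00}) then produces, outside a finite set of exceptional primes,
\[
\Zeta_{\cA,K}(s) \;=\; \sum_{J\subset T}\card{\bar E_J^\circ(\fO_K/\fP_K)}\cdot\frac{(q_K-1)^{\card J}}{q_K^{d(d+1)/2}}\prod_{j\in J}\frac{1}{q_K^{a_j}q_K^{-c_j s}-1},
\]
where $a_j\in\NN$ and $c_j\in\NN_0$ are explicit linear combinations of the $N_{j,i}$, $\nu_j$, $\mu_j$ and of the diagonal shifts $1,\dotsc,d$ coming from the integrand.

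Finally, I would check that each rational function $W_J(\qq,\tee)=\qq^{-d(d+1)/2}(\qq-1)^{\card J}\prod_{j\in J}(\qq^{a_j}\tee^{c_j}-1)^{-1}$ belongs to $\MM$ in the sense of Definition~\ref{d:MM}: this reduces to the example following Notation~\ref{not:red}, since each factor $(\qq-1)/(\qq^{a_j}\tee^{c_j}-1)$ individually lies in $\MM$ and $\qq^{-d(d+1)/2}$ is a unit there. The varieties $E_J^\circ$ are locally closed subschemes of $Y$ and hence quasi-projective over $k$, so the collection $(E_J^\circ,W_J)$ exhibits $\Zeta_{\cA}$ as equivalent to a $\Denef$ system. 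The argument for $\Zeta_{\cE\acts M}$ is verbatim, with part (\ref{thm:coneint2}) of Theorem~\ref{thm:coneint} in place of part (\ref{thm:coneint1}). The main obstacle I anticipate is bookkeeping-heavy rather than conceptual: carefully propagating the diagonal shifts $s-1,\dotsc,s-d$ and the boundedness conditions $\norm{\ff(\xx)}_K\le 1$ through the change of variables to obtain the precise exponents $a_j,c_j$ above, and simultaneously ensuring that the measure factor $(q_K-1)^{\card J}$ supplied by the torus components of the fibre of $E_J^\circ$ matches, one-for-one, the number of geometric-series denominators, so that $W_J\in\MM$ holds without any spurious pole at $\qq=1$.
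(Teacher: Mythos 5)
Your overall route coincides with the paper's: express $\Zeta_{\cA,K}$ via Theorem~\ref{thm:coneint}, invoke a resolution/principalisation-based ``explicit formula'' in the style of Denef and du Sautoy--Grunewald, and then check that the resulting rational functions lie in $\MM$; the paper simply quotes \cite[Thm\ 1.4]{dSG00} for the second step (together with the base-change argument as in Example~\ref{exs:denef_type}(\ref{exs:denef_type1}), needed to pass from completions $k_{\fp}$ to arbitrary non-Archimedean $K\supset k$, a point you should at least mention), whereas you re-derive the formula by hand.

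The genuine gap is in that re-derivation: for cone integrals the explicit formula you write down is not correct, and the error hides exactly the step where membership in $\MM$ has to be earned. The condition $\norm{\ff(\xx)}_K\le 1$ does not merely shift the exponents $a_j,c_j$. After pulling back along $h$, each $f_i\circ h$ is locally a unit times a monomial $\prod_{j}y_j^{N_{j,i}}$ (with $N_{j,i}$ possibly negative, the $f_i$ being Laurent polynomials), so on the piece indexed by $J$ the condition becomes a system of linear inequalities $\sum_{j\in J}N_{j,i}k_j\ge 0$ on the valuation vector $(k_j)_{j\in J}$. The inner sum is therefore the generating function of a rational polyhedral cone inside $\NN^{J}$, not a product of independent geometric series indexed by $J$; this is precisely the added difficulty of the integrals of \cite{dSG00} compared with Igusa's zeta function. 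After decomposing these cones into open simplicial pieces, each term acquires a denominator $\prod_{q}(\qq^{A_q}\tee^{B_q}-1)$ whose number of factors equals the dimension of the simplicial piece, and one must verify that this number is at most $\card{J}$, so that the factor $(q_K-1)^{\card J}$ compensates every potential pole at $\qq=1$ and the term lies in $\MM$. This is the content of the paper's remark that, in the notation of \cite{dSG00}, $\card{M_k}=\dim(R_k)\le\card{I_k}$. In your proposal the ``one-for-one matching'' of $(q_K-1)$-factors with denominators is automatic only because the formula has been mis-stated as a pure product over $j\in J$; with the correct cone formula this matching is the crux, not bookkeeping, and your argument as written does not establish it.
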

\begin{proof}
  We only consider $\Zeta_{\cA}$, the case of $\Zeta_{\cE \acts M}$ being
  essentially identical.  First, up to a shift $s\mapsto s+d$,
  Theorem~\ref{thm:coneint} expresses $\Zeta_{\cA,K}$ in terms of the ``cone
  integrals'' studied in \cite{dSG00}.  In particular, \cite[Thm\ 1.4]{dSG00}
  provides explicit formulae for $\Zeta_{\cA,K}(s)$ in terms of an embedded
  resolution of singularities over $k$.
  Strictly speaking, \cite[Thm\ 1.4]{dSG00} only covers the case $k = \QQ, K =
  \QQ_p$ but as \cite[Thm\ 1.4]{dSG00} draws upon and generalises Denef's
  explicit formula \cite[Thm\ 3.1]{Den87} and its proof, the extension to an
  arbitrary number field $k$ is straightforward.  Again, as in the context of
  Igusa's local zeta function (see
  Example~\ref{exs:denef_type}(\ref{exs:denef_type1}) and cf.\ \cite[\S
  4.2]{AKOV13}), one deduces the desired behaviour of \cite[Thm\ 1.4]{dSG00}
  under base change.  Given the shape of their explicit formula, it only remains
  to show that the rational functions occurring there give rise to elements of
  our ring $\MM$. This follows since, using their notation, we have $\card{M_k}
  = \dim(R_k) \le \card{I_k}$ for $k=0,\dotsc,w$.
\end{proof}

We thus obtain associated topological zeta functions (see
Definition~\ref{d:topzeta}).  In the univariate case, we do not distinguish
between $\ess$ and $\ess_1$ in our notation.

\clearpage

\begin{defns}
  \quad
  \begin{enumerate}
  \item The \emph{topological subalgebra zeta function} of $\cA$ is
      $$\zeta_{\cA,\topo}(\ess) := \Zeta_{\cA,\topo}(\ess).$$
    \item The \emph{topological submodule zeta function} of $\cE$ acting on $M$
      is
      $$\zeta_{\cE\acts M,\topo}(\ess) := \Zeta_{\cE\acts M,\topo}(\ess).$$
    \end{enumerate}
  \end{defns}
  We furthermore define the \emph{topological ideal zeta function}
  $\zeta_{\cA,\topo}^{\normal}(\ess)$ of $\cA$ in the evident way using
  Remark~\ref{r:algmod}(\ref{r:algmod2}).

  \begin{rem}
    \label{r:shift}
    The topological subalgebra zeta function of $\cA$ as defined by du Sautoy
    and Loeser \cite[\S 8]{dSL04} coincides with the function
    $\zeta_{\cA,\topo}(\ess + d)$ in our notation.  The author feels that
    avoiding this shift yields a more natural definition.
  \end{rem}

  Recall that we use subscripts to denote base change.
  \begin{prop}
    \label{prop:basetop}
    Let $\bar k$ be an algebraic closure of $k$.  Let $k' \subset \bar k$ be
    another number field and let $\fo'$ be its ring of integers.  Let $\cA'$ be
    a non-associative $\fo'$-algebra which is free of finite rank as an
    $\fo'$-module.
    \begin{enumerate}
    \item If $k = k'$ and $\cA_k{\phantom '} \!\approx_k \cA'_k$, then
      $\Zeta_{\cA}$ and $\Zeta_{\cA'}$ are equivalent (see
      Definition~\ref{d:equivalent}).
    \item
      \label{prop:basetop2}
      If $\cA_{\bar k} \approx_{\bar k} \cA'_{\bar k}$, then
      $\zeta_{\cA,\topo}(\ess) = \zeta_{\cA',\topo}(\ess)$.
    \end{enumerate}
    Corresponding statements hold for $\Zeta_{\cE\acts M}$ and $\zeta_{\cE\acts
      M,\topo}(\ess)$.
  \end{prop}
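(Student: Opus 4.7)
The plan is to first establish (i) by clearing denominators of a $k$-isomorphism, then derive (ii) from (i) combined with the invariance of topological zeta functions under base change of the ring of scalars.

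For (i), fix $\fo$-bases of $\cA$ and $\cA'$ and let $M\in\GL_d(k)$ be the matrix of a given $k$-algebra isomorphism $\cA_k\approx_k\cA'_k$. For almost all primes $\fp$ of $\fo$, the entries of both $M$ and $M^{-1}$ lie in $\fo_\fp$, whence $M\in\GL_d(\fo_\fp)$. For such $\fp$, the chosen isomorphism restricts to an $\fo_\fp$-algebra isomorphism $\cA\otimes_\fo\fo_\fp\approx\cA'\otimes_\fo\fo_\fp$, which base-changes to an $\fO_K$-algebra isomorphism whenever $\fP_K\cap\fo=\fp$. The resulting bijection on $\fO_K$-subalgebras of finite additive index yields $\zeta_{\cA_{\fO_K}}(s)=\zeta_{\cA'_{\fO_K}}(s)$ and hence $\Zeta_{\cA,K}=\Zeta_{\cA',K}$, proving (i). The corresponding statement for $\Zeta_{\cE\acts M}$ follows by applying the same denominator-clearing argument to the matrix of the given $k$-module and $k$-algebra isomorphism data.

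For (ii), choose a number field $k''\subset\bar k$ containing both $k$ and $k'$ and over which the given $\bar k$-isomorphism is defined, and let $\fo''$ be its ring of integers. The $\fo''$-algebras $\tilde\cA:=\cA\otimes_\fo\fo''$ and $\tilde\cA':=\cA'\otimes_{\fo'}\fo''$ satisfy $\tilde\cA_{k''}\approx_{k''}\tilde\cA'_{k''}$, so by (i) applied over $k''$ their associated local systems are equivalent and hence share a topological zeta function. It therefore suffices to show that topological subalgebra zeta functions are insensitive to an extension of the ring of scalars, i.e.\ that $\zeta_{\cA,\topo}(\ess)=\zeta_{\tilde\cA,\topo}(\ess)$.

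For this invariance, recall from the proof of Theorem~\ref{thm:subdenef} that $\Zeta_\cA$ is equivalent to a Denef-type system arising from $k$-varieties $V_i$ and rational functions $W_i(\qq,\tee)\in\MM$ extracted from an embedded resolution of singularities of the cone integral in Theorem~\ref{thm:coneint}(\ref{thm:coneint1}). Using the $\fo''$-basis of $\tilde\cA$ inherited from the chosen $\fo$-basis of $\cA$, the cone integral governing $\Zeta_{\tilde\cA}$ is precisely the base change to $k''$ of the one for $\cA$, and the base-changed resolution remains an embedded resolution with identical combinatorial data; hence $\Zeta_{\tilde\cA}$ is equivalent to the Denef-type system built from the varieties $(V_i)_{k''}$ together with the same rational functions $W_i$. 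By Theorem~\ref{thm:top}(\ref{thm:top3}), Remark~\ref{r:artin}, and the invariance of $\ell$-adic Euler characteristics with compact support under base change between algebraically closed fields of characteristic zero, for any fixed embedding $\bar k\hookrightarrow\CC$ both $\zeta_{\cA,\topo}(\ess)$ and $\zeta_{\tilde\cA,\topo}(\ess)$ equal $\sum_i\Euler(V_i(\CC))\,\red{W_i(\ess)}$. The main obstacle in this outline is making the base-change compatibility precise: one must verify that the (generally non-canonical) construction underlying Theorem~\ref{thm:subdenef} can be arranged so that passing from $\fo$ to $\fo''$ simply base-changes the varieties while leaving the rational functions $W_i$ intact. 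The module case is handled identically using Theorem~\ref{thm:coneint}(\ref{thm:coneint2}).
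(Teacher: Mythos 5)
Your argument is correct and follows essentially the same route as the paper's own (very brief) proof: part (i) is precisely the observation that a $k$-isomorphism is defined over $\fo_{\fp}$ for almost all $\fp$, and part (ii) descends the $\bar k$-isomorphism to a common finite extension $k''$ and then invokes invariance of the topological zeta function under finite extension of scalars, which the paper dismisses as ``easily seen'' and which you spell out via the Denef-type presentation and Theorem~\ref{thm:top}(\ref{thm:top3}). The base-change compatibility you flag as the main obstacle can be settled without re-deriving the explicit formula from the base-changed resolution (where exceptional components may split over $k''$): since the Denef-type presentation $(V_i,W_i)$ of $\Zeta_{\cA}$ is valid, away from finitely many primes, for \emph{all} non-Archimedean $K\supset k$, restricting it to $K\supset k''$ already presents $\Zeta_{\tilde\cA}$ over $k''$ by $\bigl((V_i)_{k''},W_i\bigr)$ with unchanged residue-field point counts, and $\Euler\bigl((V_i)_{k''}(\CC)\bigr)=\Euler\bigl(V_i(\CC)\bigr)$, so Theorem~\ref{thm:top}(\ref{thm:top3}) gives $\zeta_{\tilde\cA,\topo}(\ess)=\zeta_{\cA,\topo}(\ess)$ as desired.
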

  \begin{proof}
    \quad
    \begin{enumerate}
    \item Every $k$-isomorphism $\cA_k{\phantom '} \to \cA'_k$ is defined over
      $\fo_{\fp}$ for almost all primes $\fp$ of $k$.
    \item Every $\bar k$-isomorphism $\cA_{\bar k}{\phantom '}\to \cA_{\bar k}'$
      is defined over some common finite extension of $k$ and $k'$ and
      $\Zeta_{\cA,\topo}(\ess)$ is easily seen to be invariant under finite
      extensions of $k$. \qedhere
    \end{enumerate}
  \end{proof}

  Note that by the weak Nullstellensatz, we may replace $\bar k$ by any
  algebraically closed extension of $k$ in
  Proposition~\ref{prop:basetop}(\ref{prop:basetop2}).

  \begin{rem}
    \label{r:justify_voodoo}
    Suppose that there exists $W(\qq,\tee)\in \MM$ such that $\Zeta_{\cA}$ is
    equivalent to $\bigl(W(q_K^{\phantom 1},q_K^{-s})\bigr)_K$.  In view of
    \cite[7.11--7.15]{dSL04}, it is presently unclear if the existence of such a
    $W(\qq,\tee)$ is equivalent to uniformity of the subalgebra zeta functions
    of $\cA$ in the sense of \cite[\S 1.2.4]{dSW08} (for $k = \QQ$).  Given the
    existence of $W(\qq,\tee)$, the informal approach for reading off
    $\zeta_{\cA,\topo}(\ess)$ from $\zeta_{\cA\otimes_{\fo}{\fo_{\fp}}}(s)$
    given in the introduction is actually correct.
  \end{rem}

\paragraph{Nilpotent groups.}
Let $G$ be a torsion-free finitely generated nilpotent group.  Choose an
arbitrary full Lie lattice $\cL$ inside $\fL(G)$.  By
Theorem~\ref{thm:nilpotent}, it is sensible to define the \emph{topological
  subgroup zeta function} of $G$ as $\zeta^{\phantom o}_{G,\topo}(\ess) :=
\zeta^{\phantom o}_{\cL,\topo}(\ess)$ and the \emph{topological normal subgroup
  zeta function} of $G$ as $\zeta^{\normal}_{G,\topo}(\ess) :=
\zeta^{\normal}_{\cL,\topo}(\ess)$.  By Proposition~\ref{prop:basetop}, these
rational functions are actually invariants of the $\CC$-algebra
$\fL(G)\otimes_{\QQ} \CC$.

\section{Non-degeneracy II: computing topological zeta functions}
\label{s:top_eval}

As our second main result (Theorem~\ref{thm:topeval}), assuming that the set
$\ff$ of Laurent polynomials in Theorem~\ref{thm:evaluate} is \itemph{globally}
non-degenerate (see Definition~\ref{d:nd}(\ref{d:nd2})), we give an explicit
convex-geometric formula for the topological zeta function associated with the
$\fP$-adic integral $\Zeta^{\cC_0,\ff_0,\dotsc,\ff_m}_K(s_1,\dotsc,s_m)$ in
Definition~\ref{d:Zeta}.

\subsection{Splitting off torus factors and explicit formulae for Euler
  characteristics}
\label{ss:euler}

Let $(f_i)_{i\in I}$ be a finite collection of non-zero Laurent polynomials from
$\CC[X_1^{\pm 1},\dotsc,X_n^{\pm 1}]$.  For $J\subset I$, let $V_J = \{ \uu\in
\Torus^n(\CC) : \forall i\in I\colon f_i(\uu) = 0 \iff i\in J\}$.  Such Boolean
combinations of subvarieties of complex tori are $\CC$-analogues of the $\bar
V_{\bm g}^{\tau}$ in Definition~\ref{d:players}(\ref{d:players3}) (for fixed
$\tau$).  Write $d = \dim(\Newton(\prod_{i\in I}f_i))$.
Lemma~\ref{lem:split}(\ref{lem:split1}) shows that there are $\tilde f_i\in
\CC[X_1^{\pm 1},\dotsc,X_d^{\pm 1}]$ such that $V_J \approx \tilde V_J \times
\Torus^{n-d}(\CC)$ for all $J\subset I$, where $\tilde V_J = \{ \uu\in
\Torus^d(\CC) : \forall i\in I\colon \tilde f_i(\uu) = 0 \iff i\in J\}$.  For
globally non-degenerate $(f_i)_{i\in I}$, using the
Bernstein-Khovanskii-Kushnirenko Theorem (Theorem~\ref{thm:bkk}), in
Proposition~\ref{prop:relbkk}, we then derive explicit formulae for the Euler
characteristics $\Euler(\tilde V_J)$ in terms of the Newton polytopes
$(\Newton(f_i))_{i\in I}$.  Part~(\ref{lem:split1}) of the following lemma is
implicit in the proof of \cite[Prop.\ 2.5]{Oka90} while (\ref{lem:split2}) is
similar to \cite[Prop.\ 5.1]{Oka90}.

\begin{lemma}
  \label{lem:split}
  Let $k$ be any field and let $\ff = (f_i)_{i\in I}$ be a finite family of
  nonzero elements of $k[X_1^{\pm 1},\dotsc,X_n^{\pm 1}]$.  Let $\cN =
  \Newton\bigl(\prod_{i\in I}f_i\bigr)$ and let $\cC_0\subset \RR^n$ be a
  half-open rational cone.  Let $\GL_n(\ZZ)$ act on $k[X_1^{\pm
    1},\dotsc,X_n^{\pm 1}]$ via $(f,A)\mapsto f^A$ as in \S\ref{ss:monomial_sub}
  (with $X_i = \lambda_i = \xi_i$).

  \begin{enumerate}
  \item
    \label{lem:split1}
    Write $d := \dim(\cN)$.  For $i\in I$, choose $\alpha_i \in \supp(f_i)$.
    Then there exists $A\in\GL_n(\ZZ)$ such that $(\XX^{-\alpha_i} f_i)^A\in
    k[X_1^{\pm 1},\dotsc,X_d^{\pm 1}]$ for all $i\in I$.
  \item
    \label{lem:split2}
    Let $A\in \GL_n(\ZZ)$. Write $\ff^A := (f_i^A)_{i\in I}$.  Then $\ff$ is
    non-degenerate relative to $\cC_0$ if and only if $\ff^A$ is non-degenerate
    relative to $\cC_0 (A^{-1})^\top$.
  \item
    \label{lem:split3}
    Let $\tau\subset \cN$ be a $\cC_0$-visible face (see
    Definition~\ref{d:visible}) and define $f_i^\tau$ as in \S\ref{ss:nd}.

    Then $\Newton(\prod_{i \in I }f_i^\tau) = \tau$.  If $\ff$ is non-degenerate
    relative to $\cC_0$, then so is $(f_i^\tau)_{i\in I}$.
  \end{enumerate}
\end{lemma}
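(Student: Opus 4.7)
The plan is to prove the three parts in order, with part~(\ref{lem:split3}) being the most intricate.

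For~(\ref{lem:split1}), I would observe that the translated supports $\supp(\XX^{-\alpha_i} f_i) = \supp(f_i) - \alpha_i$ all contain the origin and lie within a single $\QQ$-rational linear subspace $L\subset\RR^n$ of dimension $d$.  Indeed, the Minkowski sum of the $\Newton(f_i) - \alpha_i$ equals $\cN - \sum_i\alpha_i$, still of dimension $d$; its $\QQ$-linear span $L$ contains each summand and hence each shifted support.  Because $L\cap\ZZ^n$ is a pure rank-$d$ subgroup of $\ZZ^n$, it is a direct summand, and by the elementary divisors theorem there exists $A\in\GL_n(\ZZ)$ (acting on row vectors from the right) sending $L\cap\ZZ^n$ onto $\ZZ^d\times\{0\}$; this $A$ is as required.

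For~(\ref{lem:split2}), the plan is to track how each ingredient of Definition~\ref{d:nd}(\ref{d:nd1}) transforms under the torus automorphism $A(\blank)\colon\Torus^n\to\Torus^n$ corresponding to $(\blank)^A$.  The key identities to establish are $\Newton(f^A) = \Newton(f)\cdot A$, $\NormalCone_{\tau A}(\cN A) = \NormalCone_\tau(\cN)\cdot(A^{-1})^\top$ (so that $\cC_0$-visibility of $\tau$ corresponds precisely to $\cC_0(A^{-1})^\top$-visibility of $\tau A$), and $(f_i^A)^{\tau A} = (f_i^\tau)^A$.  The chain rule then exhibits the Jacobian of $(f_j^\tau)^A$ at $\uu$ as an invertible matrix (depending on $A$ and $\uu$) times the Jacobian of $f_j^\tau$ at $A\uu$, so the rank conditions match up bijectively under this transport.

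For~(\ref{lem:split3}), the Newton-polytope statement $\Newton(\prod_i f_i^\tau) = \tau$ follows at once from the canonical decomposition $\tau = \sum_i \tau_i$ with $\tau_i = \Newton(f_i^\tau)$.  The non-degeneracy claim rests on two ingredients.  The first is the identity $(f_i^\tau)^{\tau'} = f_i^{\tau'}$ for every face $\tau'\subseteq\tau$, verified by noting that $\tau'$ is automatically a face of $\cN$ and comparing Newton polytopes ($\face_\omega(\tau_i)$ on both sides) and coefficients (inherited from $f_i$).  The second---and \emph{the main obstacle}---is the assertion that every face of $\tau$ which is $\cC_0$-visible with respect to the normal fan of $\tau$ is also $\cC_0$-visible as a face of $\cN$.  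For this I would argue by perturbation: given $\omega_0\in\cC_0\cap\NormalCone_\tau(\cN)$ (nonempty because $\tau$ is $\cC_0$-visible) and $\omega\in\cC_0\cap\NormalCone_{\tau'}(\tau)$, convexity of $\cC_0$ (an intersection of closed and open half-spaces) yields $\omega+\varepsilon\omega_0\in\cC_0$, while for sufficiently small $\varepsilon>0$ the standard refinement rule gives $\face_{\omega+\varepsilon\omega_0}(\cN) = \face_{\omega_0}(\face_\omega(\cN)) = \face_\omega(\cN)\cap\tau = \tau'$, placing $\omega+\varepsilon\omega_0$ in $\cC_0\cap\NormalCone_{\tau'}(\cN)$.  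Combining the two ingredients, the Jacobian rank condition for $\ff$ at $\tau'$ transfers verbatim to $(f_i^\tau)_{i\in I}$.
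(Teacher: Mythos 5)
Parts (\ref{lem:split1}) and (\ref{lem:split2}) of your proposal are correct and essentially identical to the paper's proof: the same ``translate so that $0$ lies in every support, span a rank-$d$ direct summand of $\ZZ^n$'' argument for (\ref{lem:split1}), and the same transport of supports, normal cones and Jacobians under the monomial automorphism for (\ref{lem:split2}). The overall structure of your part (\ref{lem:split3}) also matches the paper (your ingredient (a), the identity $(f_i^\tau)^{\tau'} = f_i^{\tau'}$, is what the paper extracts from the identity $\init_{\omega'}(f_i^\tau) = \init_{\omega+\varepsilon\omega'}(f_i)$).

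However, your proof of ingredient (b) contains a genuine error: you perturb in the wrong order. With $\omega\in\cC_0\cap\NormalCone_{\tau'}(\tau)$ as base point and $\varepsilon\omega_0$ as perturbation, the refinement rule gives $\face_{\omega+\varepsilon\omega_0}(\cN)=\face_{\omega_0}\bigl(\face_\omega(\cN)\bigr)$, but your next two equalities, $\face_{\omega_0}\bigl(\face_\omega(\cN)\bigr)=\face_\omega(\cN)\cap\tau=\tau'$, are false in general: $\face_\omega(\cN)$ is the face of the \emph{whole} polytope $\cN$ selected by $\omega$, and it need not meet $\tau$ at all (in which case $\face_\omega(\cN)\cap\tau=\emptyset$ while $\face_{\omega_0}\bigl(\face_\omega(\cN)\bigr)$ is a non-empty face far from $\tau'$). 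Concretely, take $\cN=[0,1]^2$, $\tau=\{1\}\times[0,1]$, $\tau'=\{(1,0)\}$, $\omega=(1,1)\in\NormalCone_{\tau'}(\tau)$, $\omega_0=(-1,0)\in\NormalCone_\tau(\cN)$, and $\cC_0$ any rational cone containing both (e.g.\ $\cC_0=\RR^2$): then $\face_\omega(\cN)=\{(0,0)\}$ is disjoint from $\tau$ and $\face_{\omega+\varepsilon\omega_0}(\cN)=\{(0,0)\}\neq\tau'$ for every small $\varepsilon>0$. The repair is to swap the roles of the two vectors: $\omega_0+\varepsilon\omega$ still lies in $\cC_0$ and, for $\varepsilon>0$ small, $\face_{\omega_0+\varepsilon\omega}(\cN)=\face_\omega\bigl(\face_{\omega_0}(\cN)\bigr)=\face_\omega(\tau)=\tau'$, so $\tau'$ is $\cC_0$-visible as a face of $\cN$. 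This corrected perturbation, with base point in $\cC_0\cap\NormalCone_\tau(\cN)$, is exactly what the paper uses (via $\init_{\omega'}(f_i^\tau)=\init_{\omega+\varepsilon\omega'}(f_i)$), and with it your two-ingredient argument for (\ref{lem:split3}) goes through.
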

\begin{proof}
  \quad
  \begin{enumerate}
  \item Let $\hat f_i = \XX^{-\alpha_i} f_i$.  If $(\cP_i)_{i\in I}$ is a finite
    family of polytopes in $\RR^n$ with $0\in \bigcap_{i\in I}\cP_i$, then
    $\bigcup_{i\in I}\cP_i$ and $\sum_{i\in I}\cP_i$ span the same subspace of
    $\RR^n$.  Let $\cP_i := \Newton(\hat f_i)$ so that $\sum_{i\in I}\cP_i =
    \Newton\bigl(\prod_{i\in I}\hat f_i\bigr)$ has dimension $d$.  We conclude
    that the $\ZZ$-submodule, $M$ say, of $\ZZ^n$ generated by $\bigcup_{i\in I}
    \supp(\hat f_i)$ has rank $d$.
    The claim follows since $M$ is contained in a direct summand of $\ZZ^n$ of
    the same rank, namely $\langle M\rangle_{\QQ} \cap \ZZ^n$.
  \item It suffices to prove one implication.  Let $\bm h = (h_1,\dotsc,h_r)$
    for $h_1,\dotsc,h_r\in k[\XX^{\pm 1}] = k[X_1^{\pm 1},\dotsc,X_n^{\pm 1}]$.
    Using \S\ref{ss:monomial_sub} (and a base change to $k$), each $A\in
    \GL_n(\ZZ)$ induces a $k$-automorphism $A\colon \Torus_k^n(B) \to
    \Torus_k^n(B)$ for every commutative $k$-algebra $B$.  Taking $B =
    k[\XX^{\pm 1}]$, a simple calculation reveals $\diag(A\XX) \dtimes A \dtimes
    \diag(\XX^{-1})$ to be the Jacobian matrix of $A\XX =
    (\XX^{A_1},\dotsc,\XX^{A_n})$, where $A_1,\dotsc,A_n$ denote the rows of
    $A$.
    As $\bm h^A = (h_1^A,\dotsc,h_r^A) = (h_1(A\XX),\dotsc,h_r(A\XX))$, we see
    that the Jacobian matrix of $\bm h^A$ is given by $(\bm h^A)' = \bm h'(A\XX)
    \diag(A\XX) A \diag(\XX^{-1})$
    whence $(\bm h^A)'(\uu)$ and $h'(A\uu)$ have the same rank for all $\uu \in
    \Torus^n(\bar k)$, where $\bar k$ is an algebraic closure of $k$.
    Finally, $\init_\omega(f^A) = (\init_{\omega A^\top}(f))^A$ for all
    $\omega\in \RR^n$ and $f\in k[\XX^{\pm 1}]$.  By applying what has been said
    about general $\bm h$ to the various families $(\init_{\omega
      A^\top}(f_j))_{j\in J}$
    for $\omega \in \cC_0 (A^{-1})^\top$ and $J\subset I$, the ``only if'' part
    follows.
  \item Let $\omega \in \cC_0 \cap \NormalCone_\tau(\cN)$ be arbitrary.  The
    first part follows from
    \begin{align*}
      \Newton\Bigl(\prod_{i\in I} f_i^\tau\Bigr) & = \Newton\Bigl(\prod_{i\in I}
      \init_\omega(f_i)\Bigr) = \sum_{i\in I} \Newton(\init_\omega(f_i)) =
      \sum_{i\in I} \face_\omega(\Newton(f_i)) = \\ & =
      \face_\omega\Bigl(\sum_{i\in I}\Newton(f_i)\Bigr) = \face_\omega(\cN) =
      \tau.
    \end{align*}
    For the final statement, by combining \cite[Eqns\ (2.3),(2.5)]{Stu96}, for
    $\omega'\in \RR^n$, we obtain $\init_{\omega'}(f_i^\tau) = \init_{\omega +
      \varepsilon \omega'}(f_i)$ for sufficiently small $\varepsilon > 0$.
    Hence, given $\omega'\in \cC_0$, there exists $\omega'' \in \cC_0$ with
    $\init_{\omega'}(f_i^\tau) = \init_{\omega''}(f_i) = f_i^{\tau'}$ for all
    $i\in I$, where $\tau' := \face_{\omega''}(\cN)$.  \qedhere
  \end{enumerate}
\end{proof}

Let $\Vol^n$ denote the Lebesgue measure on $\RR^n$.  Recall that the
\emph{mixed volume} of a collection $(\cQ_1,\dotsc,\cQ_n)$ of non-empty
polytopes in $\RR^n$ is given by
\[
\MixedVolume^n(\cQ_1,\dotsc,\cQ_n)= \frac{(-1)^n}{n!} \sum_{\emptyset\not=
  I\subset\{1,\dotsc,n\}} (-1)^{\card I} \dtimes \Vol^n\!\left(\sum_{i\in
    I}\cQ_i\right);
\]
see \cite[Ch.\ 7, \S 4]{CLO'S05} for details but note that we use the
\itemph{normalised} mixed volume which satisfies $\MixedVolume^n(\cQ,\dotsc,\cQ)
= \Vol^n(\cQ)$ for every non-empty polytope $\cQ\subset \RR^n$.  The function
$\MixedVolume^n$ is symmetric and invariant under translation in each argument.

\begin{defn}
  \label{d:khovanskii}
  Given non-empty polytopes $\cP_1,\dotsc,\cP_r \subset \RR^n$, let
  \[
  \Khovanskii^n(\cP_1,\dotsc,\cP_r) := (-1)^{n+r} n!  \sum_{\substack{c_1 +
      \dotsb + c_r = n,\\c_1,\dotsc,c_r\in \NN}} \MixedVolume^n(
  \underbrace{\cP_1,\dotsc,\cP_1}_{c_1}, \dotsc,
  \underbrace{\cP_r,\dotsc,\cP_r}_{c_r}).
  \]
\end{defn}

Using Khovanskii's notation {\cite[p.\ 44]{Kho78}}, we have
$\Khovanskii^n(\cP_1,\dotsc,\cP_r)= \prod_{i=1}^r \cP_i(1+\cP_i)^{-1}$.  Note
that in the special case $r = 0$, we have $\Khovanskii^n() = 0$ for $n > 0$ and
$\Khovanskii^0() = 1$.

\begin{thm}[{\cite[\S 3, Thm\ 2]{Kho78}}]
  \label{thm:bkk}
  Let $f_1,\dotsc,f_r\in \CC[X_1^{\pm 1},\dotsc,X_n^{\pm 1}]$ be non-zero and
  suppose that $(f_1,\dotsc,f_r)$ is \Classically-non-degenerate (see
  Definition~\ref{d:capdegen}).  Define $V = \{ \uu\in \Torus^n(\CC) : f_1(\uu)
  = \dotsb = f_r(\uu) = 0 \}$.  Then the topological Euler characteristic of $V$
  is given by $\Euler(V) = \Khovanskii^n(\Newton(f_1),\dotsc,\Newton(f_r))$.
\end{thm}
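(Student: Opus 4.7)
My plan is to follow Khovanskii's toric-geometric approach: compactify the torus, compute the Euler characteristic of the complete intersection inside the compactification by the Chern class formula, and peel off the boundary contributions to recover $\Euler(V)$. I would first dispose of the degenerate case. Writing $\cN := \Newton\bigl(\prod_{i=1}^r f_i\bigr)$, Lemma~\ref{lem:split}(\ref{lem:split1})--(\ref{lem:split2}) together with Remark~\ref{r:nd}(\ref{r:nd2}) yield a monomial change of variables in $\GL_n(\ZZ)$ after which all $f_i$ lie in $\CC[X_1^{\pm 1},\dotsc,X_d^{\pm 1}]$ with $d = \dim(\cN)$, exhibiting $V$ as a product of a torus factor $\Torus^{n-d}(\CC)$ with its lower-dimensional analogue. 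If $d < n$ then $\Euler(V) = 0$; simultaneously $\Khovanskii^n(\Newton(f_1),\dotsc,\Newton(f_r)) = 0$, since every $n$-dimensional mixed volume of polytopes living in a proper affine subspace vanishes. I may therefore assume $\dim(\cN) = n$.

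Second, I would choose a smooth projective toric variety $X = X_\Sigma$ whose fan $\Sigma$ refines the normal fan of $\cN$, and consider the closures $\bar V_i \subset X$ of $\{f_i = 0\}$. Because the initial form $f_i^\tau$ controls the behaviour of $f_i$ near the torus orbit $O_\sigma$ dual to the face $\tau = \face_\omega(\cN)$ with $\omega$ in the relative interior of $\sigma$, the hypothesis of non-degeneracy in the sense of Definition~\ref{d:capdegen} yields a toric Bertini statement: each $\bar V_i$ is smooth and the $\bar V_i$ meet one another transversally along every orbit $O_\sigma$ of $X$. Stratifying $\bar V := \bar V_1 \cap \dotsb \cap \bar V_r$ by orbits and invoking additivity of $\Euler$ on constructible sets, I obtain
\[
\Euler(\bar V) \;=\; \Euler(V) \;+\; \sum_{\sigma \ne \{0\}} \Euler(\bar V \cap O_\sigma),
\]
where each boundary stratum $\bar V \cap O_\sigma$ is again a non-degenerate system with defining polytopes $\face_\omega(\Newton(f_i))$ (Lemma~\ref{lem:split}(\ref{lem:split3})), so an induction on $n$ expresses them in the form predicted by $\Khovanskii$.

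Third, I would compute $\Euler(\bar V)$ intrinsically via the Chern class formula for the smooth complete intersection of the toric divisors $\bar V_i$ in $X$; on toric varieties, top intersection numbers of nef divisors are mixed volumes of the associated polytopes, so the output is a polynomial expression in the $\MixedVolume^n(\Newton(f_{i_1}),\dotsc,\Newton(f_{i_n}))$. Substituting the inductive formula for the boundary strata into the stratification identity and solving for $\Euler(V)$ should produce the alternating sum of Definition~\ref{d:khovanskii}. The base case of the induction is $r = n$, handled by Bernstein's theorem: $\Euler(V) = \noof V = n! \, \MixedVolume^n(\Newton(f_1),\dotsc,\Newton(f_n))$, which already matches $\Khovanskii^n$.

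The main obstacle is the combinatorial bookkeeping in the last step: reorganising the sum of Chern class contributions from all cones of $\Sigma$ into Khovanskii's closed generating-function form $\prod_{i=1}^r \cP_i(1+\cP_i)^{-1}$ \cite[p.\ 44]{Kho78}. A secondary difficulty is establishing toric Bertini from the pointwise Jacobian hypothesis in Definition~\ref{d:capdegen}; the key point is that on $O_\sigma$ each $\bar V_i$ is cut out, up to a toric factor, by the initial form $f_i^{\tau_\sigma}$, so the required transversality reduces to the Jacobian condition applied to the face system $(f_i^{\tau_\sigma})_{i\in I}$, which is preserved by Lemma~\ref{lem:split}(\ref{lem:split3}).
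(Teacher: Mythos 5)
The paper itself contains no proof of Theorem~\ref{thm:bkk}; it is imported wholesale from Khovanskii \cite{Kho78}, so the only meaningful comparison is with the classical argument -- and your sketch is essentially that argument (Khovanskii's toric compactification, later systematised by Danilov and Khovanskii): refine the normal fan of $\cN$ to a smooth projective fan, identify each closure $\bar V_i$ with the nef divisor class attached to $\Newton(f_i)$, compute $\Euler$ of the compactified intersection by Chern classes (top intersection numbers of these nef classes being, up to the factor $n!$, the normalised mixed volumes), and strip off orbit strata by additivity and induction on dimension. Your treatment of the degenerate case $\dim(\cN)<n$ via Lemma~\ref{lem:split} is correct (both sides vanish), and the terminal case is harmless: for $n<r$ non-degeneracy forces $V=\emptyset$ while $\Khovanskii^n$ is an empty sum, and for $n=r$ Bernstein's theorem gives the matching count of transverse points.

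One step is overstated and would fail as written: Definition~\ref{d:capdegen} constrains the Jacobian only at \emph{common} zeros of the full face system $(f_1^\tau,\dotsc,f_r^\tau)$, so it does not make the individual $\bar V_i$ smooth, nor proper subfamilies transverse -- e.g.\ the pair $(f_1,X_1)$ with $f_1$ a nodal curve is vacuously \Classically-non-degenerate although $\bar V_1$ is singular. Your ``toric Bertini'' must therefore be weakened to: smoothness of each $\bar V_i$ and transversality (also to the orbit closures) \emph{at points of} $\bar V:=\bar V_1\cap\dotsb\cap\bar V_r$. This weaker statement does follow, and it is all the argument needs: once $\Sigma$ refines the normal fan, $\bar V_i\cap O_\sigma$ is contained in the zero set of $f_i^{\tau_\sigma}$, so every boundary point of $\bar V$ is a common zero of the face system, where the rank-$r$ hypothesis makes all $r$ gradients independent; conversely each such common zero lies in every $\bar V_i$ (each gradient row is nonzero there, so the implicit function theorem along the one-parameter subgroup produces nearby zeros of $f_i$ in the torus), which is exactly the identification of $\bar V\cap O_\sigma$ with the face system's zero locus that your induction via Lemma~\ref{lem:split}(\ref{lem:split3}) requires, and the Chern-class/adjunction computation of $\Euler(\bar V)$ only uses normal-bundle data along $\bar V$. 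Finally, be aware that the resummation of the resulting mixed-volume expressions into the closed form $\Khovanskii^n$, i.e.\ Khovanskii's $\prod_i\cP_i(1+\cP_i)^{-1}$, is not mere bookkeeping -- it is the bulk of \cite[\S 3]{Kho78} -- so as it stands this is a sound plan rather than a complete proof.
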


As $\MixedVolume^n$ is symmetric, we can naturally extend the domain of
$\Khovanskii^n$ to include arbitrary (unordered) finite families of polytopes in
$\RR^n$.  If $\dim(\cP_1 + \dotsb + \cP_r) < n$ in
Definition~\ref{d:khovanskii}, then we necessarily have
$\Khovanskii^n(\cP_1,\dotsc,\cP_r) = 0$.  We now consider a relative version of
$\Khovanskii^n$ that allows us to extract useful information in such
low-dimensional situations.
Thus, for a $d$-dimensional linear subspace $U\le \RR^n$ defined over $\ZZ$, let
$\Vol^U$ be the measure on $U$ obtained from $\Vol^d$ by identifying $U$ and
$\RR^d$ via an arbitrary choice of a $\ZZ$-isomorphism $U\cap \ZZ^n \approx
\ZZ^d$.  For non-empty polytopes $\cQ_1,\dotsc,\cQ_d,\cP_1,\dotsc,\cP_r$
contained in $U$, define $\MixedVolume^U(\cQ_1,\dotsc,\cQ_d)$ and
$\Khovanskii^U(\cP_1,\dotsc,\cP_r)$ in the evident way.  Let $\bm\cP =
(\cP_i)_{i\in I}$ be a finite family of non-empty lattice polytopes in $\RR^n$.
For each $i\in I$, choose an arbitrary point $\xx_i \in \cP_i \cap \ZZ^n$.
Define $\cL(\bm\cP)$ to be the linear subspace of $\RR^n$ associated with the
affine hull of $\sum_{i\in I}\cP_i$ within $\RR^n$.  Hence, $\cL(\bm\cP) =
\bigl\langle \sum_{i\in I}( \cP_i - \xx_i)\bigr\rangle_{\RR} = \bigl\langle
\bigcup_{i\in I} (\cP_i - \xx_i)\bigr\rangle_{\RR}$ and
$\dim\bigl(\cL(\bm\cP)\bigr) = \dim\bigl(\sum_{i\in I}\cP_i\bigr)$.
\begin{defn}
  \label{d:rel_khovanskii}
  Given a finite family $\bm\cP = (\cP_i)_{i\in I}$ of non-empty lattice
  polytopes in $\RR^n$ and a subset $J\subset I$, let
  $\Khovanskii^{\rel}_J(\bm\cP) := \sum\limits_{J\subset T\subset I}
  (-1)^{\card{T}+\card{J}} \dtimes \Khovanskii_{\phantom J}^{\mathcal L(\bm\cP)}
  \!\bigl(\cP_t - \xx_t\bigr)_{t\in T}$.
\end{defn}
By translation invariance of mixed volumes, $\Khovanskii_J^{\rel}(\bm\cP)$ does
not depend on our choices of the $\xx_i$.  For a family $\ff = (f_i)_{i\in I}$
of Laurent polynomials, write $\Newton(\ff) := \bigl(\Newton(f_i)\bigr)_{i\in
  I}$.

\begin{prop}
  \label{prop:relbkk}
  Let $\ff = (f_i)_{i\in I}$ be a finite family of non-zero $f_i\in \CC[X_1^{\pm
    1},\dotsc,X_n^{\pm 1}]$.  Write $\cN = \Newton(\prod_{i\in I}f_i)$ and $d =
  \dim(\cN)$.  Let $\alpha_i \in \supp(f_i)$ and $A\in \GL_n(\ZZ)$ be as in
  Lemma~\ref{lem:split}(\ref{lem:split1}).  Write $\tilde f_i :=
  (\XX^{-\alpha_i} f_i)^A \in \CC[X_1^{\pm 1},\dotsc,X_d^{\pm 1}]$ for $i\in I$.
  For $J\subset I$, let $\tilde V_J = \{ \uu\in \Torus^d(\CC) : \forall i\in
  I\colon \tilde f_i(\uu) = 0 \iff i\in J\}$.  Suppose that $\ff$ is globally
  non-degenerate in the sense of Definition~\ref{d:nd}(\ref{d:nd2}).  Then
  $\chi(\tilde V_J) = \Khovanskii_J^{\rel}(\Newton(\ff))$.
\end{prop}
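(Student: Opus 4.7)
The plan is to combine an inclusion--exclusion on Euler characteristics with the Bernstein--Khovanskii--Kushnirenko formula (Theorem~\ref{thm:bkk}), after transporting everything through the coordinate change $A$ of Lemma~\ref{lem:split}(\ref{lem:split1}). For $T\subset I$ write $V^T := \{\uu\in\Torus^d(\CC) : \tilde f_t(\uu) = 0 \text{ for all } t\in T\}$, so that $\tilde V_J = V^J \setminus \bigcup_{i\in I\setminus J} V^{J\cup\{i\}}$. Additivity of the topological Euler characteristic and Möbius inversion on the Boolean lattice then yield
\[
\chi(\tilde V_J) = \sum_{J\subset T\subset I} (-1)^{\card{T}+\card{J}} \chi(V^T).
\]

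To apply Theorem~\ref{thm:bkk} to each $V^T$, I would first check that the family $(\tilde f_i)_{i\in I}$ is globally non-degenerate. This follows from the hypothesis on $\ff$ together with Remark~\ref{r:nd}(\ref{r:nd2}) (invariance under rescaling by Laurent monomials) and Lemma~\ref{lem:split}(\ref{lem:split2}) (invariance under the $\GL_n(\ZZ)$-action, with $\RR^n(A^{-1})^\top = \RR^n$). Hence every subfamily $(\tilde f_t)_{t\in T}$ is \Classically-non-degenerate over $\CC$, and Theorem~\ref{thm:bkk} gives
\[
\chi(V^T) = \Khovanskii^d\bigl(\Newton(\tilde f_t)\bigr)_{t\in T},
\]
valid also for $T = \emptyset$, where both sides equal $\chi(\Torus^d(\CC))$ ($= 0$ if $d>0$ and $= 1$ if $d = 0$).

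Next I would convert this back to data about $\ff$. The key observation is that $\Newton(\tilde f_i) = \bigl(\Newton(f_i) - \alpha_i\bigr)A$ and that, by the construction in the proof of Lemma~\ref{lem:split}(\ref{lem:split1}), $A$ restricts to a $\ZZ$-module isomorphism $\cL(\Newton(\ff)) \cap \ZZ^n \xrightarrow{\sim} \ZZ^d \times \{0\}^{n-d} \cong \ZZ^d$. Such an isomorphism transports $\Vol^{\cL(\Newton(\ff))}$ to $\Vol^d$ and hence preserves the normalised mixed volumes (and thus $\Khovanskii$) used in Definition~\ref{d:rel_khovanskii}. Consequently,
\[
\Khovanskii^d\bigl(\Newton(\tilde f_t)\bigr)_{t\in T} = \Khovanskii^{\cL(\Newton(\ff))}\bigl(\Newton(f_t) - \alpha_t\bigr)_{t\in T}.
\]
Since $\xx_t - \alpha_t \in \cL(\Newton(\ff))$, translation invariance of mixed volumes in each argument allows us to replace $\alpha_t$ by the arbitrary chosen $\xx_t$ from Definition~\ref{d:rel_khovanskii}. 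Substituting into the inclusion--exclusion identity above yields precisely $\Khovanskii_J^{\rel}(\Newton(\ff))$.

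I do not expect a serious obstacle here: the main technical point is the compatibility of the various normalised volumes under $A$, together with the remark that the proof of Lemma~\ref{lem:split}(\ref{lem:split1}) actually delivers an $A$ sending the saturation $\langle \bigcup_i \supp(\XX^{-\alpha_i} f_i)\rangle_\QQ \cap \ZZ^n$ bijectively onto the coordinate $\ZZ^d$, so that no lattice index factor is introduced. Verifying this is routine and then the rest of the argument is bookkeeping.
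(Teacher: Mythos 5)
Your proposal is correct and follows essentially the same route as the paper's proof: inclusion–exclusion over supersets $T\supset J$, global non-degeneracy of $(\tilde f_i)_{i\in I}$ via Remark~\ref{r:nd}(\ref{r:nd2}) and Lemma~\ref{lem:split}(\ref{lem:split2}), Theorem~\ref{thm:bkk} for each $\tilde W_T$, and the identification $\cL(\Newton(\ff))A=\RR^d\times\{0\}^{n-d}$ (with $A\in\GL_n(\ZZ)$, so the lattice normalisations match automatically) to rewrite $\Khovanskii^d$ as $\Khovanskii^{\cL(\Newton(\ff))}$ of the translated polytopes. The final replacement of $\alpha_t$ by arbitrary $\xx_t$ is already covered by the independence noted after Definition~\ref{d:rel_khovanskii}, so no further argument is needed.
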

\begin{proof}
  For $T\subset I$, let $\tilde W_T = \{ \uu\in \Torus^d(\CC) : \forall t\in
  T\colon \tilde f_t(\uu) = 0\}$.  The additivity of topological Euler
  characteristics for (the associated analytic spaces of) $\CC$-varieties (via
  $\Euler_{\phantom c}\!\!(\dtimes) = \Euler_c(\dtimes)$; cf.\
  Remark~\ref{r:artin}) and the inclusion-exclusion principle yield
  $\Euler(\tilde V_J) = \sum_{J \subset T\subset I} (-1)^{\card T + \card J}
  \dtimes \Euler(\tilde W_T)$.  By Remark~\ref{r:nd}(\ref{r:nd2}) and
  Lemma~\ref{lem:split}(\ref{lem:split2}), $(\tilde f_i)_{i\in I}$ is globally
  non-degenerate.  Theorem~\ref{thm:bkk} therefore implies that $\Euler(\tilde
  W_T) = \Khovanskii^d(\Newton(\tilde f_t)_{t\in T})$ for $T\subset I$.
  Next, we have $\cL(\Newton(\ff)) \dtimes A = \RR^d \times \{0\}^{n-d} \subset
  \RR^n$
  and hence $\Khovanskii^d(\Newton(\tilde f_t)_{t\in T}) =
  \Khovanskii^{\cL(\Newton(\ff))} ( \Newton(f_t) - \alpha_t)_{t\in T} $ whence
  $\Euler(\tilde V_J) = \sum_{J \subset T\subset I} (-1)^{\card T + \card J}
  \dtimes \Khovanskii^{\cL(\Newton(\ff))}(\Newton(f_t)-\alpha_t)_{t\in T} =
  \Khovanskii_J^{\rel}(\Newton(\ff))$.
\end{proof}

\subsection{Topological evaluation of non-degenerate
  \texorpdfstring{$\fP$}{P}-adic integrals}

Using \S\ref{ss:euler}, we now rewrite the formulae provided by
Theorem~\ref{thm:evaluate} using the language of systems of local zeta functions
of Denef type developed in \S\ref{ss:denef_type}.  As the main result of this
section, we then obtain Theorem~\ref{thm:topeval}, the topological counterpart
of Theorem~\ref{thm:evaluate}.

As in \S\ref{ss:eval}, let $k$ be a number field with ring of integers $\fo$,
let $\cC_0 \subset \Orth^n$ be a half-open rational cone, and let $\ff_0 \subset
\fo[\XX^{\pm 1}]$ and $\ff_1,\dotsc,\ff_m \subset \fo[\XX] =
\fo[X_1,\dotsc,X_n]$ be non-empty finite sets with $0\not\in \ff := \ff_0 \cup
\dotsb \cup \ff_m$.

\begin{defn}
  \label{d:W}
  For a $\cC_0$-visible face $\tau$ of $\cN := \Newton(\prod\ff)$ and a subset
  $\bm g \subset \ff$, let $\cZ^{\cC^\tau_0(\bm g), \cP^\tau_1(\bm
    g),\dotsc,\cP^\tau_m(\bm g)}(\xi_0, \dotsc,\xi_m) \in
  \QQ(\xi_0,\dotsc,\xi_m)$ be as in Theorem~\ref{thm:evaluate} and define
  \[
  W^\tau_{\bm g}(\qq,\tee_1,\dotsc,\tee_m) := \qq^{-n}
  (\qq-1)^{n-\dim(\tau)+\card{\bm g}} \dtimes \cZ^{\cC^\tau_0(\bm g),
    \cP^\tau_1(\bm g),\dotsc,\cP^\tau_m(\bm g)}(\qq^{-1}\!,
  \tee_1,\dotsc,\tee_m).
  \]
\end{defn}

We will see in Lemma~\ref{lem:nd_denef}(\ref{lem:nd_denef1}) that each
$W^\tau_{\bm g}(\qq,\tee_1,\dotsc,\tee_m)$ belongs to the ring $\MM$ from
Definition~\ref{d:MM}.
\begin{thm}
  \label{thm:topeval}
  Let $k$ be a number field with ring of integers $\fo$.  Let $\cC_0 \subset
  \Orth^n$ be a half-open rational cone and let $\ff_0 \subset \fo[\XX^{\pm 1}]$
  and $\ff_1,\dotsc,\ff_m \subset \fo[\XX] = \fo[X_1,\dotsc,X_n]$ be non-empty
  finite sets with $0\not\in \ff := \ff_0 \cup\dotsb \cup \ff_m$.  Define
  $\Zeta^{\cC_0, \ff_0,\dotsc,\ff_m}_K(s_1,\dotsc,s_m)$ as in
  Definition~\ref{d:Zeta}.  For a $\cC_0$-visible face $\tau$ of $\cN :=
  \Newton(\prod \ff)$ (see Definition~\ref{d:visible} and \S\ref{ss:newton}) and
  a subset $\bm g \subset \ff$, define a rational function $W^\tau_{\bm
    g}(\qq,\tee_1,\dotsc,\tee_m)$ as in Definition~\ref{d:W}.  As in
  \S\ref{ss:binomial}, let $\red{W(\ess_1,\dotsc,\ess_m)}$ denote the formal
  reduction of $W(\qq,\tee_1,\dotsc,\tee_m)$ modulo $\qq-1$ obtained by
  expanding $W(\qq,\qq^{-\ess_1},\dotsc,\qq^{-\ess_m})$ using the formal
  binomial series.  For $f\in \ff$ and a $\cC_0$-visible face $\tau\subset \cN$,
  let $f^\tau$ be as in \S\ref{ss:eval}.  Define $\Khovanskii^{\rel}$ as in
  Definition~\ref{d:rel_khovanskii}.

  Suppose that $\ff$ is globally non-degenerate in the sense of
  Definition~\ref{d:nd}(\ref{d:nd2}).  Then the system of local zeta functions
  $\Zeta^{\cC_0,\ff_0,\dotsc,\ff_m} := \left(
    \Zeta^{\cC_0,\ff_0,\dotsc,\ff_m}_K(s_1,\dotsc,s_m) \right) $ is of Denef
  type (see Definition~\ref{d:denef_type} and
  Example~\ref{exs:denef_type}(\ref{exs:denef_type6})) and the associated
  topological zeta function in the sense of Definition~\ref{d:topzeta} is given
  by
  \[
  \Zeta^{\cC_0,\ff_0,\dotsc,\ff_m}_{\topo}(\ess_1,\dotsc,\ess_m) =
  \sum_{\substack{\bm g \subset \ff,\\\tau\subset \cN}} \Khovanskii^{\rel}_{\bm
    g}\bigl(\Newton(f^\tau)\bigr)_{f\in \ff} \dtimes \red{W^{\tau}_{\bm
      g}(\ess_1,\dotsc,\ess_m)} \in \QQ(\ess_1,\dotsc,\ess_m),
  \]
  the sum being taken over all $\cC_0$-visible faces $\tau$ of $\cN$.
\end{thm}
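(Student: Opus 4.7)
The plan is to recast the $\fP$-adic formula of Theorem~\ref{thm:evaluate} in the normal form required by Definition~\ref{d:denef_type}, thereby realising $\Zeta^{\cC_0,\ff_0,\dotsc,\ff_m}$ as a system of Denef type, and then read off the topological zeta function via Theorem~\ref{thm:top}(\ref{thm:top3}). The first step is to split a torus factor off of each variety $\bar V^\tau_{\bm g}$. Since the family $(f^\tau)_{f\in \ff}$ has combined Newton polytope $\tau$ (of dimension $\dim(\tau)$) by Lemma~\ref{lem:split}(\ref{lem:split3}), Lemma~\ref{lem:split}(\ref{lem:split1}) supplies some $A\in\GL_n(\ZZ)$ and Laurent polynomials $(\tilde f^\tau)_{f\in \ff}\subset \fo[X_1^{\pm 1},\dotsc,X_{\dim(\tau)}^{\pm 1}]$ such that the $\GL_n(\ZZ)$-automorphism of $\Torus^n$ induced by $A$ identifies $\bar V^\tau_{\bm g}$ with $\tilde V^\tau_{\bm g}\times\Torus^{n-\dim(\tau)}$, where $\tilde V^\tau_{\bm g}\subset\Torus^{\dim(\tau)}$ is the analogous Boolean-combination subvariety. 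In particular,
\[
\noof{\bar V^\tau_{\bm g}(\fO/\fP)} = (q-1)^{n-\dim(\tau)}\dtimes\noof{\tilde V^\tau_{\bm g}(\fO/\fP)}.
\]

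Next I would verify that $W^\tau_{\bm g}(\qq,\tee)\in\MM$. By Remark~\ref{r:Zdenom}, the rational function $\cZ^{\cC^\tau_0(\bm g),\cP^\tau_1(\bm g),\dotsc,\cP^\tau_m(\bm g)}$ admits a denominator of the form $\prod_i(1-\xi_0^{a_{i0}}\xi_1^{a_{i1}}\dotsb\xi_m^{a_{im}})$ with $a_{i0}>0$, so after the substitution $\xi_0\gets\qq^{-1}$ the $(\qq-1)$-adic pole order is bounded by $\dim(\cC^\tau_0(\bm g))$. The dimension count
\[
\dim(\cC^\tau_0(\bm g)) \le \dim(\cC_0^\tau) + \card{\bm g} \le (n-\dim(\tau)) + \card{\bm g},
\]
which uses $\cC_0^\tau\subset\NormalCone_\tau(\cN)$ and the identity $n=\dim(\tau)+\dim(\NormalCone_\tau(\cN))$ from \S\ref{ss:cones}, shows that the prefactor $(\qq-1)^{n-\dim(\tau)+\card{\bm g}}$ in Definition~\ref{d:W} cancels this pole, whence $W^\tau_{\bm g}(\qq,\qq^{-\ess})\in\QQ(\ess)\llb\qq-1\rrb$. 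Substituting the torus decomposition into Theorem~\ref{thm:evaluate} now gives
\[
\Zeta^{\cC_0,\ff_0,\dotsc,\ff_m}_K(s_1,\dotsc,s_m) = \sum_{\bm g,\tau}\noof{\tilde V^\tau_{\bm g}(\fO/\fP)}\dtimes W^\tau_{\bm g}\bigl(q,q^{-s_1},\dotsc,q^{-s_m}\bigr),
\]
which by Definition~\ref{d:denef_type} exhibits $\Zeta^{\cC_0,\ff_0,\dotsc,\ff_m}$ as a system of Denef type, associated with the $k$-varieties $\tilde V^\tau_{\bm g}$ and the functions $W^\tau_{\bm g}\in\MM$.

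Theorem~\ref{thm:top}(\ref{thm:top3}) then yields
\[
\Zeta^{\cC_0,\ff_0,\dotsc,\ff_m}_{\topo}(\ess) = \sum_{\bm g,\tau}\Euler\bigl(\tilde V^\tau_{\bm g}(\CC)\bigr)\dtimes\red{W^\tau_{\bm g}(\ess)}.
\]
To evaluate the Euler characteristics, I would invoke Proposition~\ref{prop:relbkk} applied to the family $(f^\tau)_{f\in\ff}$. Since $\ff$ is globally non-degenerate and every face of $\cN$ is $\RR^n$-visible, Lemma~\ref{lem:split}(\ref{lem:split3}) (with ``$\cC_0=\RR^n$'') shows that $(f^\tau)_{f\in\ff}$ is also globally non-degenerate. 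Moreover the $\tilde V^\tau_{\bm g}$ produced in the first paragraph coincide with those appearing in Proposition~\ref{prop:relbkk} applied to $(f^\tau)_{f\in\ff}$. Hence $\Euler\bigl(\tilde V^\tau_{\bm g}(\CC)\bigr) = \Khovanskii^{\rel}_{\bm g}\bigl(\Newton(f^\tau)\bigr)_{f\in\ff}$, and substituting this in gives the asserted formula.

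The main obstacle is the dimension count governing the pole order of the $\cZ$-factor, together with the reconciliation between the prefactor $(q-1)^{\card{\bm g}}/q^n$ in Theorem~\ref{thm:evaluate} and the normalisation adopted in Definition~\ref{d:W}; the missing factor $(q-1)^{n-\dim(\tau)}$ is supplied precisely by counting $\fO/\fP$-points on the torus factor $\Torus^{n-\dim(\tau)}$ produced by Lemma~\ref{lem:split}(\ref{lem:split1}), so that the accounting on both sides balances exactly.
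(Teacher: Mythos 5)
Your proposal is correct and follows essentially the same route as the paper, which packages the membership $W^\tau_{\bm g}\in\MM$ (via the same dimension count $\dim(\cC_0^\tau(\bm g))\le n-\dim(\tau)+\card{\bm g}$), the torus-factor splitting $V^\tau_{\bm g}\approx_k\widetilde V^\tau_{\bm g}\times_k\Torus^{n-\dim(\tau)}_k$, and the resulting Denef-type presentation into Lemma~\ref{lem:nd_denef}, before applying Theorem~\ref{thm:top}(\ref{thm:top3}) and Proposition~\ref{prop:relbkk} exactly as you do. The only nitpick is that the bound on the $(\qq-1)$-adic pole order really rests on the remarks following Theorem~\ref{thm:genfun} (at most $\dim$ many denominator factors) together with \S\ref{ss:monomial_sub}, rather than on Remark~\ref{r:Zdenom} alone.
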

\begin{rem}
  In the spirit of \S\ref{ss:ndlit}, we may regard Theorem~\ref{thm:topeval} as
  a generalisation of the combinatorial formula for the (global) topological
  zeta function of a globally non-degenerate polynomial given by Denef and
  Loeser \cite[Thm\ 5.3(i)]{DL92}.  In fact, by applying the approach explained
  in the following to the explicit formula of Denef and Hoornaert \cite[Thm\
  4.2]{DH01}, we may recover the formula of Denef and Loeser verbatim.
\end{rem}

We will now embark upon a proof of Theorem~\ref{thm:topeval}.  First, define
half-open cones $\cC_0^\tau(\bm g)$ and polytopes $\cP^\tau_j(\bm g)$ as in
Definition~\ref{d:players}(\ref{d:players1})--(\ref{d:players2}).  For a
$\cC_0$-visible face $\tau \subset \cN := \Newton(\prod\ff)$ and a subset $\bm
g\subset \ff$, let $V^\tau_{\bm g} := \Spec\Bigl(\frac{k[\XX^{\pm 1}]}{\langle
  g^\tau : g\in \bm g\rangle}\Bigr) \setminus \Spec\Bigl(\frac{k[\XX^{\pm
    1}]}{\langle \prod_{f\in \ff\setminus\bm g}f^\tau \rangle}\Bigr) \subset
\Torus^n_k$; note that $V^{\tau}_{\bm g} \approx_k \Spec\Bigl(\frac{k[X_1^{\pm
    1},\dotsc,X_n^{\pm 1},Y]} {\langle g^\tau : g \in \bm g \rangle + \langle 1
  - Y \prod_{f\in \ff\setminus\bm g} f^\tau \rangle }\Bigr)$ is affine.  We see
that, denoting reduction modulo $\fp$ by $\bar\dtimes$ as in
\S\ref{ss:denef_type}, for almost all primes $\fp$ of $k$ and all field
extensions $\fk \supset \fo/\fp$, the set $\bar V^{\tau}_{\bm g}(\fk)$ coincides
with its namesake in Definition~\ref{d:players}(\ref{d:players3}).

For each $\cC_0$-visible face $\tau \subset \cN$, using
Lemma~\ref{lem:split}(\ref{lem:split1}),(\ref{lem:split3}), choose $A^\tau\in
\GL_n(\ZZ)$ and a point $\alpha^\tau(f)\in \supp(f^\tau)$ for each $f\in \ff$
such that $(\XX^{-\alpha^\tau(f)} f^\tau)^{A^\tau} \in k\Bigl[X_1^{\pm
  1},\dotsc,X_{\dim(\tau)}^{\pm 1}\Bigr]$ for all $f\in \ff$.  Having made these
choices, for $\bm g\subset \ff$, we define
\[
\widetilde V_{\bm g}^\tau := \Spec\Biggl(\frac{k\Bigl[X_1^{\pm
    1},\dotsc,X_{\dim(\tau)}^{\pm 1}\Bigr]} {\bigl\langle
  (\XX^{-\alpha^\tau(g)}g^\tau)^{A^\tau} : g\in \bm g\bigr\rangle} \Biggr)
\setminus \Spec\Biggl(\frac{k\Bigl[X_1^{\pm 1},\dotsc,X_{\dim(\tau)}^{\pm
    1}\Bigr]} {\bigl\langle \prod_{f\in \ff\setminus\bm g}
  (\XX^{-\alpha^\tau(f)}f^\tau)^{A^\tau} \bigr\rangle}\Biggr) \subset
\Torus^{\dim(\tau)}_k.
\]

\begin{lemma}
  \label{lem:nd_denef}
  \quad
  \begin{enumerate}
  \item
    \label{lem:nd_denef1}
    $W_{\bm g}^\tau(\qq,\tee_1,\dotsc,\tee_m) \in \MM$
    for all $\cC_0$-visible faces $\tau\subset \cN$ and subsets $\bm g\subset
    \ff$.
  \item
    \label{lem:nd_denef2}
    Notation as in Theorem~\ref{thm:evaluate}.  Suppose that $\ff$ is
    non-degenerate relative to $\cC_0$.  If $K \supset k$ is a non-Archimedean
    local field, then, unless $\fP \cap \fo$ belongs to some finite exceptional
    set, we have
    \[
    \Zeta^{\cC_0,\ff_0,\dotsc,\ff_m}_K(s_1,\dotsc,s_m) = \sum_{\substack{\bm g
        \subset \ff,\\\tau\subset \cN}} \noof{\bar{\tilde V}^{\tau}_{\bm
        g}(\fO/\fP)} \dtimes W^{\tau}_{\bm g}(q,q^{-s_1},\dotsc,q^{-s_m})
    \]
    for $s_1,\dotsc,s_m\in \CC$ with $\Real(s_j)\ge 0$.

    In particular, the system of local zeta functions
    $\Zeta^{\cC_0,\ff_0,\dotsc,\ff_m}$ consisting of the meromorphic
    continuations of the $\Zeta^{\cC_0,\ff_0,\dotsc,\ff_m}_K(s_1,\dotsc,s_m)$ is
    of Denef type and
    $$\Zeta^{\cC_0,\ff_0,\dotsc,\ff_m}_{\topo}(\ess_1,\dotsc,\ess_m) =
    \sum\limits_{\substack{\bm g \subset \ff,\\\tau\subset \cN}} \Euler(\tilde
    V_{\bm g}^\tau(\CC)) \dtimes \red{W_{\bm g}^\tau(\ess_1,\dotsc,\ess_m)}.
    $$
  \end{enumerate}
  \end{lemma}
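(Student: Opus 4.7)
}
The plan is to treat parts~(\ref{lem:nd_denef1}) and~(\ref{lem:nd_denef2}) in that order. Part~(\ref{lem:nd_denef1}) reduces to a sharp pole-order count at $\qq = 1$ for the rational functions produced by Definition~\ref{def:Z}; part~(\ref{lem:nd_denef2}) then follows by massaging the formula of Theorem~\ref{thm:evaluate} into Denef-type shape via a torus-factor splitting of the varieties $\bar V_{\bm g}^\tau$, after which Theorem~\ref{thm:top}(\ref{thm:top3}) and Proposition~\ref{prop:relbkk} deliver the topological formulae.

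For part~(\ref{lem:nd_denef1}), I would unwind Definition~\ref{def:Z}, so that each summand of $\cZ^{\cC_0^\tau(\bm g), \cP_1^\tau(\bm g), \dotsc, \cP_m^\tau(\bm g)}$ is a generating function of the half-open rational cone $\cC_0^\tau(\bm g) \cap \NormalCone_\sigma$, pulled back under the monomial substitution $A(\vv)$ (whose first column is $\one$). The theory of such generating functions recalled after Theorem~\ref{thm:genfun} writes each summand over a denominator of the form $\prod_i (1 - \bm\xi^{\beta_i A(\vv)})$ with at most $\dim(\cC_0^\tau(\bm g) \cap \NormalCone_\sigma)$ factors and $0 \ne \beta_i \in \overline{\cC_0^\tau(\bm g)} \cap \ZZ^{n + \card{\bm g}}$. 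Because $\cC_0^\tau(\bm g) \subseteq \Orth^n \times \StrictOrth^{\bm g}$ (see Definition~\ref{d:players}(\ref{d:players2})), every nonzero such $\beta_i$ has $\bil{\beta_i}{\one} > 0$, so the substitution $\xi_0 \gets \qq^{-1}$, $\xi_j \gets \qq^{-\ess_j}$ turns each denominator factor into $1 - \qq^{-\bil{\beta_i}{\one} - \sum_j \ess_j \bil{\beta_i}{\vv_j}}$, which has $(\qq-1)$-adic valuation exactly $1$ in $\QQ(\ess)\llb \qq - 1 \rrb$. Summing pole orders, the total order of pole of $\cZ^{\cC_0^\tau(\bm g), \cP_1^\tau(\bm g), \dotsc}(\qq^{-1}, \qq^{-\ess_1}, \dotsc, \qq^{-\ess_m})$ at $\qq = 1$ is bounded by $\dim(\cC_0^\tau(\bm g))$, and since $\cC_0^\tau \subseteq \NormalCone_\tau(\cN)$ has dimension at most $n - \dim(\tau)$, this gives $\dim(\cC_0^\tau(\bm g)) \le (n - \dim(\tau)) + \card{\bm g}$. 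This is precisely the exponent in the prefactor of $W_{\bm g}^\tau$, so $W_{\bm g}^\tau \in \MM$.

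For part~(\ref{lem:nd_denef2}), I would apply Lemma~\ref{lem:split}(\ref{lem:split1}),(\ref{lem:split3}) to $(f^\tau)_{f\in \ff}$, whose Newton polytope equals $\tau$ and thus has dimension $\dim(\tau)$. This produces $A^\tau \in \GL_n(\ZZ)$ and points $\alpha^\tau(f) \in \supp(f^\tau)$ witnessing a $k$-isomorphism $V^\tau_{\bm g} \approx_k \tilde V^\tau_{\bm g} \times_k \Torus^{n-\dim(\tau)}_k$: multiplication by the unit monomials $\XX^{-\alpha^\tau(f)}$ preserves vanishing loci on $\Torus^n$, while $A^\tau$ induces an automorphism of $\Torus^n_k$. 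Spreading this to integral models, for almost all primes $\fp$ the isomorphism descends to $\fo/\fp$, whence $\noof{\bar V^\tau_{\bm g}(\fO/\fP)} = (q-1)^{n - \dim(\tau)} \noof{\bar{\tilde V}^\tau_{\bm g}(\fO/\fP)}$. Substituting this into the formula of Theorem~\ref{thm:evaluate} and regrouping according to the definition of $W^\tau_{\bm g}$ yields the claimed expression for $\Zeta^{\cC_0,\ff_0,\dotsc,\ff_m}_K$. Combined with part~(\ref{lem:nd_denef1}), this exhibits $\Zeta^{\cC_0,\ff_0,\dotsc,\ff_m}$ as a system of Denef type with defining data $(\tilde V^\tau_{\bm g}, W^\tau_{\bm g})$. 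Theorem~\ref{thm:top}(\ref{thm:top3}) then gives the topological formula in terms of $\Euler(\tilde V^\tau_{\bm g}(\CC))$, and, applying Proposition~\ref{prop:relbkk} to $(f^\tau)_{f\in \ff}$ (which inherits global non-degeneracy from $\ff$ via Lemma~\ref{lem:split}(\ref{lem:split3})), each such Euler characteristic rewrites as $\Khovanskii^{\rel}_{\bm g}(\Newton(f^\tau))_{f\in\ff}$, yielding the formula of Theorem~\ref{thm:topeval}.

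The main obstacle is the sharpness of the pole-order bound in part~(\ref{lem:nd_denef1}): the prefactor $(\qq - 1)^{n - \dim(\tau) + \card{\bm g}}$ exactly matches the crude dimension bound on $\cC_0^\tau(\bm g)$, so any hidden cancellation produced by the substitution $A(\vv)$ that lowered the $(\qq-1)$-adic valuation of a denominator factor below $1$ would be fatal. Tracking the positivity input $\bil{\beta_i}{\one} > 0$ carefully through the half-open refinement built into Definition~\ref{d:players}(\ref{d:players2}), and through the decomposition of $\genfun{\cC_0^\tau(\bm g) \cap \NormalCone_\sigma}^{A(\vv)}$ into simple fractions, is the delicate part of the argument.
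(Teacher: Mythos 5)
Your proposal is correct and follows essentially the same route as the paper: part~(\ref{lem:nd_denef1}) via the bound $\dim \cC_0^\tau(\bm g) \le n - \dim(\tau) + \card{\bm g}$ combined with the denominator description after Theorem~\ref{thm:genfun} (with positive $\xi_0$-exponents as in Remark~\ref{r:Zdenom}), and part~(\ref{lem:nd_denef2}) via the splitting $V^\tau_{\bm g} \approx_k \widetilde V^\tau_{\bm g} \times_k \Torus^{n-\dim(\tau)}_k$, point counting, regrouping in Theorem~\ref{thm:evaluate}, and Theorem~\ref{thm:top}(\ref{thm:top3}). Your closing worry about ``sharpness'' is moot: only an upper bound on the pole order at $\qq=1$ is needed, and each substituted denominator factor has $(\qq-1)$-adic valuation exactly one because its $\xi_0$-exponent is positive, so no cancellation can be harmful.
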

  \begin{proof}
    \quad
    \begin{enumerate}
    \item For a $\cC_0$-visible face $\tau \subset \cN$ and $\bm g \subset \ff$,
      we have $\cC_0^\tau(\bm g) \subset \NormalCone_{\tau}(\cN)
      \times\StrictOrth^{\bm g}$ and hence $\dim(\cC_0^\tau(\bm g)) \le n -
      \dim(\tau) + \card{\bm g}$.  The claim follows from the definition of
      $\cZ^{\cC^\tau_0(\bm g), \cP^\tau_1(\bm g),\dotsc,\cP^\tau_m(\bm
        g)}(\qq^{-1}\!, \tee_1,\dotsc,\tee_m)$ (see Definition~\ref{def:Z}), the
      remarks following Theorem~\ref{thm:genfun}, and \S\ref{ss:monomial_sub}.
      
    \item The first part follows from Theorem~\ref{thm:evaluate}
      and $V_{\bm g}^\tau \approx_k \widetilde V_{\bm g}^\tau \times_k
      \Torus^{n-\dim(\tau)}_k$.  For the remainder, we additionally invoke
      (\ref{lem:nd_denef1}) and Theorem~\ref{thm:top}(\ref{thm:top3}).  Strictly
      speaking, Theorem~\ref{thm:evaluate} does not prove rationality of
      $\Zeta_K^{\cC_0,\ff_0,\dotsc,\ff_m}(s_1,\dotsc,s_m)$ when $\fP \cap \fo$
      is an exceptional prime, but see the comments after
      Definition~\ref{d:denef_type} and note that exceptional primes are
      irrelevant for our purposes anyway.  \qedhere
    \end{enumerate}
  \end{proof}

\begin{proof}[Proof of Theorem~\ref{thm:topeval}]
  Fix a $\cC_0$-visible face $\tau \subset \cN$ and $\bm g\subset \ff$.  Let $d
  = \dim(\tau)$.  For $f\in \ff$, write $\tilde f^\tau := (\XX^{-\alpha^\tau(f)}
  f^\tau)^{A^\tau}$.  By Lemma~\ref{lem:split}(\ref{lem:split3}),
  $(f^\tau)_{f\in \ff}$ is globally non-degenerate.  Since $\tilde V_{\bm
    g}^\tau(\CC) = \{ \uu\in \Torus^d(\CC) : \forall f\in \ff\colon \tilde
  f^\tau(\uu) = 0 \iff f\in \bm g\}$, Proposition~\ref{prop:relbkk} shows that
  $\Euler(\tilde V_{\bm g}^\tau(\CC)) = \Khovanskii^{\rel}_{\bm
    g}\bigl(\Newton(f^\tau)\bigr)_{f\in \ff}$.  The claim now follows from
  Lemma~\ref{lem:nd_denef}(\ref{lem:nd_denef2}).
\end{proof}

\begin{rem}
  \label{r:top_easier}
  As far as explicit computations are concerned, evaluating the sum in
  Theorem~\ref{thm:topeval} is often considerably easier than $\fP$-adic
  computations using Theorem~\ref{thm:evaluate} for two reasons.  First, in
  contrast to the determination of the numbers $\noof{\bar V_{\bm
      g}^\tau(\fO/\fP)}$ (as $K$, hence $\fO$, varies) in
  Theorem~\ref{thm:evaluate}, the computation of the Euler characteristics
  $\Khovanskii^{\rel}_{\bm g}\bigl(\Newton(f^\tau)\bigr)_{f\in \ff}$ in
  Theorem~\ref{thm:topeval} is a purely mechanical process that can be performed
  by a computer.  Second, while the rational functions occurring in
  Theorem~\ref{thm:evaluate} admit concise descriptions in terms of
  convex-geometric objects, when written as quotients of polynomials, they often
  become too unwieldy for further computations.  Passing to the ``reductions
  modulo $q-1$'' of rational functions mitigates this to a large extent.
\end{rem}

\section{Applications}
\label{s:app}

We now apply the techniques for computing $\fP$-adic integrals and topological
zeta functions developed in the present article to some specific examples.  Our
primary focus will be on the case of topological subalgebra zeta functions.
However, to illustrate Theorem~\ref{thm:evaluate}, we also work through a known
$\fP$-adic example.

\subsection{Example: \texorpdfstring{$\Sl_2(\ZZ)$}{sl(2,ZZ)} -- revisited}
\label{ss:sl2}

Let $\mathfrak{gl}_n(\ZZ)$ be the Lie ring of all integral $n\times n$ matrices
with the usual Lie bracket $[A,B] = AB - BA$.  Let $\Sl_n(\ZZ)$ be the Lie
subring of $\mathfrak{gl}_n(\ZZ)$ consisting of traceless matrices.  We now
illustrate the key ingredients of Theorem~\ref{thm:evaluate} in the context of
the well-studied example $\Sl_2(\ZZ)$.

\paragraph{History.}
The subalgebra zeta function of $\Sl_2(\ZZ_p)$ was first recorded (for odd $p$)
by du Sautoy \cite{dS00} based, among other things, on elaborate computations
carried out by Ilani \cite{Il99}.  For $p\not= 2$, we have
\begin{equation}
  \label{eq:sl2p}
  \zeta_{\Sl_2(\ZZ_p)} = 
  \frac{\zeta_p(s) \zeta_p(s-1)\zeta_p(2s-2)\zeta_p(2s-1)}{\zeta_p(3s-1)},
\end{equation}
where $\zeta_p(s) = 1/(1-p^{-s})$.  Later, du Sautoy and Taylor \cite{dST02}
confirmed \eqref{eq:sl2p} by different means and also provided the missing case
$p = 2$.  Their approach proceeds by computing an associated $p$-adic integral
(see Theorem~\ref{thm:coneint}) via a manual resolution of singularities,
obtained as a sequence of blow-ups with judiciously chosen centres.  This
geometrically-minded computation was later reinterpreted in a motivic setting by
du Sautoy and Loeser, allowing them to deduce that
\begin{equation}
  \label{eq:sl2top}
  \zeta_{\Sl_2(\ZZ),\topo}(\ess) = 
  \frac{3\ess - 1}{2(2\ess - 1)(\ess - 1)^2\ess},
\end{equation}
see \cite[\S 9.3]{dSL04} and note that we already corrected the shift present in
\cite{dSL04}, see Remark~\ref{r:shift}.  For yet another $p$-adic approach,
Klopsch and Voll \cite{KV09} gave an explicit formula for $\zeta_{\cL}(s)$,
where $\cL$ is Lie $\ZZ_p$-algebra which is free of rank $3$ as a
$\ZZ_p$-module, in terms of Igusa's local zeta function associated with a
ternary quadratic form attached to $\cL$.

\paragraph{Setting up the integral.}
We now explain how the $p$-adic integral in \cite{dST02} can, for odd $p$, be
computed using our method.  Let $K$ be a finite extension of $\QQ_p$.  As
before, we follow the conventions explained in Notation~\ref{not:local}.  First,
by applying Theorem~\ref{thm:coneint} and
Remark~\ref{r:coneint}(\ref{r:coneint2}) with respect to the $\ZZ$-basis
$\bigl(\begin{mimat}0&0\\1&0\end{mimat},\begin{mimat}0&1\\0&0\end{mimat},\begin{mimat}1&0\\0&-1\end{mimat}\bigr)$
of $\Sl_2(\ZZ)$, as stated in \cite[\S 2]{dST02} (for $K = \QQ_p$), we obtain
\begin{equation}
  \label{eq:sl2}
  \zeta_{\Sl_2(\fO)}(s) = (1-q^{-1})^{-3} \int_{V(\fO)}
  \abs{x_{11}}^{s-1} \abs{x_{22}}^{s-2} \abs{x_{33}}^{s-3} \dd\mu(\xx),
\end{equation}
where
$$
V(\fO) = \left\{ \xx = \left[\begin{smallmatrix} x_{11} & x_{12} & x_{13} \\ &
      x_{22} & x_{23} \\ & & x_{33}
    \end{smallmatrix}\right] \in \Tr_3(\fO) \cap \Torus^6(K):
  \norm{ 4x_{12}^{\phantom 1}x_{22}^{-1} x_{23}^{\phantom 1}, \,\,
    4x_{12}^{\phantom 1} x_{22}^{-1} x_{33}^{\phantom 1}, \,\, f(\xx) }\le 1
\right\},
$$
$f(\XX) = X_{11}^{\phantom 1}X_{22}^{\phantom 1} X_{33}^{-1} + 4
X_{13}^{\phantom 1} X_{23}^{\phantom 1} X_{33}^{-1} - 4X_{12}^{\phantom 1}
X_{22}^{-1} X_{23}^2 X_{33}^{-1}$, and we identify $\Tr_3(K) \approx K^6$ via
$\xx \mapsto (x_{11},x_{12},x_{13},x_{22},x_{23},x_{33})$.  We henceforth assume
that $p\not= 2$.  Let
\[
\cC_0 = \left\{ (\omega_{11},\omega_{12},\dotsc,\omega_{33})\in \Orth^6 :
  \omega_{12} - \omega_{22} + \omega_{23} \ge 0, \,\, \omega_{12} - \omega_{22}
  + \omega_{33} \ge 0 \right\}
\]
so that $V(\fO) = \{ \xx \in \Torus^6(K) : \nu(\xx) \in \cC_0, \abs{f(\xx)} \le
1\}$.  In order to keep technicalities in our exposition at a minimum and in
view of the comparatively simple shape of the integral in \eqref{eq:sl2}, in the
following, we will directly carry out the relevant steps from the proof of
Theorem~\ref{thm:evaluate}.  In particular, we apply the relevant
specialisations from Remark~\ref{r:eval_and_algmod} right from the start and we
also describe all rational functions in terms of generating functions of
half-open cones instead of invoking the language of
\S\ref{ss:zeta_cone_polytopes}.

\paragraph{Breaking up the integral.}
The Newton polytope $\Delta := \Newton(f) \subset \RR^6$ is a triangle and one
checks that $f$ is globally non-degenerate over $\QQ$.  The same is true of the
reduction of $f$ modulo every odd prime.  As $(1,\dotsc,1)$ is an interior point
of $\cC_0$ and $f$ is homogeneous, each of the $7$ faces of $\Delta$ is
$\cC_0$-visible by Remark~\ref{r:nd}(\ref{r:nd4}).  Let us write $\bm a :=
X_{11}^{\phantom 1}X_{22}^{\phantom 1} X_{33}^{-1}$, $\bm b := 4X_{13}^{\phantom
  1} X_{23}^{\phantom 1} X_{33}^{-1}$, and $\bm c := -4X_{12}^{\phantom 1}
X_{22}^{-1} X_{23}^2 X_{33}^{-1}$ for the terms of $f$.  We further use the same
symbols to denote the corresponding vertices of $\Delta$ and let $\bm a\bm b$,
$\bm a\bm c$, and $\bm b\bm c$ denote the edges of $\Delta$ suggested by the
notation.  Note that in the present situation, each non-zero ``sub-polynomial''
of $f$ gives rise to a face of $\Delta$.  As in the proof of
Proposition~\ref{prop:Zseries}, for a face $\tau\subset \Delta$, define
$\cC_0^\tau := \cC_0 \cap \NormalCone_{\tau}(\Delta)$.  Let $V^\tau(\fO) := \{
\xx \in \Torus^6(K) : \nu(\xx) \in \cC_0^\tau, \abs{f(\xx)} \le 1\}$ and
$I^\tau(s) := \int_{V^\tau(\fO)} \abs{x_{11}}^{s-1} \abs{x_{22}}^{s-2}
\abs{x_{33}}^{s-3} \dd\mu(\xx)$ so that $(1-q^{-1})^3 \zeta_{\Sl_2(\fO)}(s) =
\sum_\tau I^\tau(s)$.  Write $\alpha,\beta,\gamma\in \ZZ^6$ for the exponent
vectors of the monomials in $\bm a$, $\bm b$, and $\bm c$, respectively.  We
thus, for example, have $\cC_0^{\bm a \bm b} = \{ \omega\in \cC_0^{\phantom a} :
\bil\alpha\omega = \bil\beta\omega < \bil\gamma\omega\}$.

\paragraph{Computing the pieces: vertices.}
We now describe the computation of $I^{\bm a}(s)$.  The other two vertices $\bm
b$ and $\bm c$ of $\Delta$ can be treated in the same way.  Thus, if $\omega \in
\cC_0^{\bm a}\cap\ZZ^6$ and $\xx\in \Torus^6(K)$ with $\nu(\xx) = \omega$, then
$\nu(f(\xx)) = \bil{\alpha}\omega$ as $\init_\omega(f) = \bm a$ does not vanish
on $\Torus^6(\fO/\fP)$.  Letting $(\dtimes)^*$ denote dual cones and writing
$\genfun{\dtimes}$ for generating functions of half-open cones (see
\S\ref{ss:cones}), as in the proof of Proposition~\ref{prop:monomial_integral},
we find that
\begin{align}
  \nonumber I^{\bm a}(s) & = \int_{\{ \xx\in \Torus^6(K) : \nu(\xx) \in
    \cC_0^{\bm a} \cap \{ \alpha\}^*\}} \abs{x_{11}}^{s-1} \abs{x_{22}}^{s-2}
  \abs{x_{33}}^{s-3} \dd\mu(\xx) \\
  \label{eq:Zas}
  & = (1-q^{-1})^6 \underbrace{ \genfun{\cC_0^{\bm a} \cap \{\bm a\}^*} \left(
      \left[
        \begin{smallmatrix}
          q^{-s} & q^{-1}&  q^{-1}\\
          & q^{1-s}& q^{-1}\\
          & & q^{2-s}
        \end{smallmatrix}
      \right] \right) }_{=: Z^{\bm a}(s)},
\end{align}
where the substitution is to be understood in the natural way---that is, within
$\genfun{\cC_0^{\bm a} \cap \{\bm a\}^*}(\xi_{11},\dotsc,\xi_{33})$, replace
$\xi_{ij}$ by the entry in position $(i,j)$ of the matrix in \eqref{eq:Zas}.

\paragraph{Computing the pieces: edges.}
We now consider the computation of $I^{\bm a\bm b}(s)$, the cases of the other
two edges $\bm a\bm c$ and $\bm b\bm c$ of $\Delta$ being essentially identical.
Let $\omega \in \cC_0^{\bm a\bm b} \cap \ZZ^6$ and $\uu \in \Torus^6(\fO)$ be
arbitrary; note that $\bil\alpha\omega = \bil\beta\omega$.  Using
\S\ref{ss:newton}, we see that $f(\pi^\omega \uu) = \pi^{\bil\alpha\omega}
\dtimes\bigl( (\bm a + \bm b)(\uu) + \cO(\pi)\bigr)$.  Therefore, if $(\bm a +
\bm b)(\uu) \not\equiv 0 \bmod\fP$, then $\nu(f(\pi^\omega \xx)) =
\bil\alpha\omega$ for all $\xx \in \uu + \fP^6$.  Suppose that $(\bm a + \bm
b)(\uu) \equiv 0 \bmod\fP$.  Then non-degeneracy of $f$ and Hensel's lemma allow
us to to replace $\uu$ by a possibly different element of $\uu+ \fP^6$ in such a
way that the congruence $(\bm a + \bm b)(\uu) \equiv 0 \bmod \fP$ becomes an
equality $f(\pi^\omega\uu) = 0$.  Next, after a suitable local change of
coordinates,
we may even assume that $f(\pi^\omega \xx) = \pi^{\bil\alpha\omega} (x_1-u_1)$
for all $\xx \in \uu + \fP^6$, see the proof of Theorem~\ref{thm:evaluate} for
details.  The number of solutions of $(\bm a + \bm b)(\bar\uu) \equiv 0\bmod
\fP$ for $\bar\uu\in \Torus^6(\fO/\fP)$ is $(q-1)^5$, as we can e.g.\ solve for
$\bar u_{11}$.  We conclude that {\small
  \begin{align*}
    I^{\bm a\bm b}(s) = & \frac{(q-1)^6 - (q-1)^5}{q^6} \dtimes
    \underbrace{\genfun{\cC_0^{\bm a\bm b} \cap \{\bm
        a\}^*}\left(\dotso\right)}_{=: Z^{\bm a\bm b}_{\off}(s)} +
    \frac{(q-1)^{5+1}}{q^6} \dtimes \underbrace{\genfun{ (\cC_0^{\bm a\bm b}
        \!\times\!  \StrictOrth) \cap \{ (\alpha,\!1)\}^*}(\dotso)}_{=:
      Z_{\on}^{\bm a \bm b}(s)},
  \end{align*}} where the substitutions are as in the case of $I^{\bm a}(s)$
above, except that the additional variable in the second summand is also
replaced by $q^{-1}$.  The ``$\on\!/\!\off$''-types of rational functions,
indicating the vanishing or non-vanishing of the initial form $\bm a + \bm b$ of
$f$, correspond to the two subsets $\bm g\subset \{ f\}$ in
Theorem~\ref{thm:evaluate}, see below.

\paragraph{Computing the pieces: the triangle.}
The computation for the triangle $\tau = \Delta$ is very similar to those for
the edges.  It is easy to see that the number of points $\bar\uu\in
\Torus^6(\fO/\fP)$ with $f(\bar \uu) \equiv 0 \bmod \fP$ is $(q-1)^4(q-2)$.

\paragraph{Summing up.}
In summary, we obtain rational functions $Z^{\tau}_{\on\!/\!\off}(s) \in
\QQ(q,q^{-s})$ with
\begin{align}
  \label{eq:sl2toric}
  \nonumber (1-q^{-1})^3 \zeta_{\Sl_2(\fO)}(s) = & (1-q^{-1})^6 \dtimes ( Z^{\bm
    a}(s) + Z^{\bm b}(s) + Z^{\bm c}(s) + Z^{\bm a\bm b}_{\on}(s) + Z^{\bm a\bm
    c}_{\on}(s) + Z^{\bm b \bm c}_{\on}(s)) +
  \\
  \nonumber & (1-q^{-1})^5(1-2q^{-1}) \dtimes (Z^{\bm a\bm b}_{\off}(s) + Z^{\bm
    a \bm
    c}_{\off}(s) + Z^{\bm b\bm c}_{\off}(s)) + \\
  & (1-q^{-1})^5(1-2q^{-1}) Z^{\Delta}_{\on}(s) +
  (1-q^{-1})^4(1-3q^{-1}+3q^{-2}) Z^{\Delta}_{\off}(s).
\end{align}
All that remains to be done to finish our computation of $\zeta_{\Sl_2(\fO)}(s)$
is to explicitly evaluate the right-hand side of \eqref{eq:sl2toric}.  In order
to do that, we only need to compute generating functions of certain half-open
cones which can e.g.\ be determined using the computer program \texttt{LattE}
\cite{DLHTY04}.  Carrying out this computation, after substantial cancellations,
we find that
\begin{equation}
  \label{eq:sl2O}
  \zeta_{\Sl_2(\fO)}(s) =
  \frac
  {1- q^{1-3s}}{(1- q^{1-2s})(1 - q^{2-2s})(1-q^{1-s})(1-q^{-s})},
\end{equation}
which of course agrees with \eqref{eq:sl2p} for $\fO = \ZZ_p$; recall that we
assumed that $\fP \cap \ZZ \not= \langle 2\rangle$.

\paragraph{}
Remark~\ref{r:eval_and_algmod} allows us to compare \eqref{eq:sl2toric} and the
formula in Theorem~\ref{thm:evaluate}.  Since the integrand in \eqref{eq:sl2}
consists of monomials, the only subsets $\bm g \subset \{ f,
X_{11},X_{22},X_{33}\}$ that can possibly give rise to a non-zero number
$\noof{\bar V^{\tau}_{\bm g}(\fO/\fP)}$ are precisely the subsets of $\{
f\}$---the ``$\on\!/\!\off$-types'' of rational functions correspond to those.
Note that if $\tau$ is a vertex, then the corresponding initial form of $f$ does
not vanish on the torus.  This explains the absence of an
``$\on\!/\!\off$''-distinction for vertices.

Having carried out the above computation for arbitrary $\fO$ with $\fP\cap
\ZZ\not= \langle 2\rangle$, it is a straightforward matter to confirm the
associated topological zeta function given in \eqref{eq:sl2top}.  For larger
examples, $\fP$-adic computations often become considerably more involved than
their topological counterparts (see Remark~\ref{r:top_easier}).  From now on, we
will focus exclusively on the topological case.

\subsection{Example: from the Heisenberg Lie ring to quadratic forms}
\label{ss:Heisenberg}

Let $k$ be a number field with ring of integers $\fo$.  Let $M$ be a free
$\fo$-module of finite rank endowed with a bilinear form $\beta\colon M\times M
\to \fo$.  We assume that $\beta$ is non-degenerate over $k$.  Define an
$\fo$-algebra $\cB(\beta)$ with underlying $\fo$-module $M \oplus \fo$ and
multiplication $(\xx,a)(\yy,b) = (0,\beta(\xx,\yy))$.  Such algebras were
considered in \cite[\S 3.3]{dSL96} in the context of a different type of zeta
function.

\begin{ex}
  \label{ex:Heisenberg}
  If $k = \QQ$, $M = \ZZ^{2m}$ and $\beta$ is the standard symplectic form
  $\beta(\xx,\yy) = \xx \dtimes \left[\begin{smallmatrix} 0_m & 1_m \\ -1_m &
      0_m \end{smallmatrix}\right] \dtimes \yy^\top$, then $\cB(\beta)$ is a
  central product $\cB(\beta) \approx \fh \curlyvee \dotsb \curlyvee \fh$ of $m$
  copies of the Heisenberg Lie ring $\fh = \left[\begin{smallmatrix} 0 & \ZZ &
      \ZZ \\ & 0 & \ZZ \\ & & 0
    \end{smallmatrix}\right] \le \mathfrak{gl}_3(\ZZ)$.
\end{ex}

\begin{rem}
  Using the conventions in Notation~\ref{not:local}, for a non-Archimedean local
  field $K\supset k$, the subalgebra zeta function $\zeta_{\cB(\beta)
    \otimes_{\fo}\fO}(s)$ is closely related to arithmetic properties of the
  bilinear forms induced by $\beta$ on submodules of $M_{\fO}$.  Namely, for
  each $\fO$-submodule $\Lambda \le M_{\fO}$ of finite index, define $m(\Lambda)
  := \min(\nu(\beta(\xx,\yy)) : \xx,\yy\in \Lambda) < \infty$ and consider the
  bivariate zeta function
  \[
  \Zeta^\beta(s_1,s_2) = \sum_{\Lambda} \idx{M_{\fO}:\Lambda}^{-s_1} q^{-
    m(\Lambda)s_2}.
  \]

  Then it is easy to see that $ (q^{d-s}-1) \zeta_{\cB(\beta)\otimes_{\fo}
    \fO}(s) = q^{d-s} \Zeta^\beta(s,s-d) - \zeta_{(\fO^d,0)}(s) $, where
  $(\fO^d,0)$ denotes $\fO^d$ endowed with the zero multiplication and $d =
  \rank_{\fo}(M)$.
\end{rem}

\paragraph{Heisenberg Lie rings.}
Let $K$ be a $p$-adic field with associated objects as in
Notation~\ref{not:local}.  The subalgebra zeta function of $\fh_{\fO}$ was among
the very first examples computed, see \cite[Prop.\ 8.1]{GSS88}; their formula
for $\fO = \ZZ_p$ and its proof carry over to the present case.  The topological
result, $\zeta_{\fh,\topo}(\ess) = \frac{3}{2(2\ess - 3)(\ess - 1)\ess}$, has
been recorded in \cite[\S 9.2]{dSL04} (up to the shift explained in
Remark~\ref{r:shift}).  The subalgebra zeta function of $\fh_{\ZZ_p} \curlyvee
\fh_{\ZZ_p}$ was first computed by Woodward, see \cite[p.~\!{57}]{Woo05} for the
result.

We now sketch the computation of $\zeta_{\fh \curlyvee \fh,\topo}(\ess)$ using
our method.  First, we choose a $\ZZ$-basis $(\ee_1,\dotsc,\ee_5)$ of $\fh
\curlyvee \fh$ which satisfies (a) $[\ee_1,\ee_3] = [\ee_2,\ee_4] = \ee_5$ and
(b) all other commutators of basis elements not implied by anti-symmetry are
zero.  Next, Theorem~\ref{thm:coneint} and
Remark~\ref{r:coneint}(\ref{r:coneint2}) provide us with a description of
$\zeta_{\fh_{\fO}\curlyvee \fh_{\fO}}(s)$ in terms of a $\fP$-adic integral.  As
illustrated in the case of $\Sl_2(\ZZ)$ in \S\ref{ss:sl2}, by discarding
finitely many primes, we can always move monomial divisibility conditions from
the domain of integration into the ambient cone $\cC_0$.  Moreover, since the
integrand in Theorem~\ref{thm:coneint} is always comprised of monomials, it is
without relevance for questions of non-degeneracy.  In the present context, a
simple computation reveals the essential ingredient of the $\fP$-adic integral
in Theorem~\ref{thm:coneint} to be the condition
\[
\norm{\underbrace{x_1^{\phantom 1} x_{10}^{\phantom 1} x_{15}^{-1}+
    x_2^{\phantom 1} x_{11}^{\phantom 1} x_{15}^{-1}}_{=:f_1(\xx)}, \quad
  \underbrace{x_4^{\phantom 1} x_6^{\phantom 1}x_{15}^{-1} - x_1^{\phantom 1}
    x_7^{\phantom 1}x_{15}^{-1} - x_2^{\phantom 1} x_8^{\phantom 1}
    x_{15}^{-1}}_{=:f_2(\xx)}} \le 1,
\]
where $\xx\in \Torus^{15}(K) \cap \fO^{15}$ and we identified $\Tr_5(K) \approx
K^{15}$.  One can check that $\Newton(f_1 f_2)$ is a $3$-dimensional polytope
with $6$ vertices and a total of $21$ faces.  Most importantly $(f_1,f_2)$ is
globally non-degenerate.  By following through our procedure
(Theorem~\ref{thm:topeval}) with the help of a computer, we find that
\[
\zeta_{\fh \curlyvee \fh,\topo}(\ess) = \frac{17\ess - 21}{3(3\ess - 4)(3\ess -
  7)(\ess - 3)(\ess-2)(\ess-1)\ess},
\]
which is exactly what one would expect given Woodward's $p$-adic formula and the
``informal approach'' from the introduction.

While we shall not concern ourselves with it in any detail, the complexity of
the integral under consideration here is sufficiently low to allow for
computer-assisted $\fP$-adic computations.  In order to adapt our computations
for $\Sl_2(\fO)$ in \S\ref{ss:sl2} to the case of $\fh_{\fO} \curlyvee
\fh_{\fO}$, one would replace the ``$\on\!/\!\off$''-distinction used there by
pairs $(\on\!/\!\off, \on\!/\!\off)$ indicating exactly which of the initial
forms of $f_1$ and $f_2$ vanish at a given point of $\Torus^{15}(\fO/\fP)$.
This suggests a total number of $4\dtimes 21$ cases but, as in \S\ref{ss:sl2},
this number can be reduced slightly because certain subvarieties of tori are a
priori known to be empty.

\paragraph{Commutative variants of Heisenberg rings.}
Let $\mathfrak{ch}_d$ be the $\ZZ$-algebra $\cB(\beta_d)$, where $\beta_d$ is
the standard inner product $(\xx,\yy) \mapsto \sum_{i=1}^d x_i y_i$ on $\ZZ^d$.
In view of Example~\ref{ex:Heisenberg}, we regard the rings $\mathfrak{ch}_d$ as
(non-unital) commutative analogues of the central products of Heisenberg Lie
rings considered above.  We will now describe how we computed
\begin{align*}
  \zeta_{\mathfrak{ch}_2,\topo}(\ess)
  & = \frac{3\ess - 2}{4(\ess - 1)^3\ess}, \\
  \zeta_{\mathfrak{ch}_3,\topo}(\ess)
  & = \frac{14}{(5\ess - 6)(5\ess - 7)(\ess - 2)\ess}, \text{ and}\\
  \zeta_{\mathfrak{ch}_4,\topo}(\ess) & = \frac{17\ess - 25}{(3\ess - 4)(3\ess -
    5)^2(\ess - 3)(\ess - 2)\ess}
\end{align*}
using Theorem~\ref{thm:topeval} and some elementary insight into the structure
of $\mathfrak{ch}_d$.  To the author's knowledge, these topological zeta
functions have not been previously computed, nor are their $\fP$-adic versions
known.

Write $\mathfrak{ch}_d = \ZZ \ee_1 \oplus \dotsb \oplus \ZZ \ee_d \oplus \ZZ
\vv$, where $\ee_i^2 = \vv$ and all other products of basis vectors are trivial.
We again follow the conventions in Notation~\ref{not:local}.  By using
Theorem~\ref{thm:coneint} and Remark~\ref{r:coneint}(\ref{r:coneint2}) relative
to the $\ZZ$-basis $(\ee_1,\ee_2,\vv)$ of $\mathfrak{ch}_2$, we express
$\zeta_{\mathfrak{ch}_2(\fO)}(s)$ in terms of a $\fP$-adic integral.  If we
identify $\Tr_3(K) \approx K^6$ as above, the domain of integration consists of
those $\xx\in \fO^6$ with $\norm{x_1^2x_6^{-1} + x_2^2x_6^{-1}, x_4^2x_6^{-1},
  x_2^{\phantom 1}x_4^{\phantom 1}x_6^{-1}} \le 1$.  As before, monomials do not
affect questions of degeneracy.  Moreover, binomials are always globally
non-degenerate so the computation of $\zeta_{\mathfrak{ch}_2,\topo}(\ess)$
becomes an essentially trivial matter.  In contrast, the $\ZZ$-basis
$(\ee_1,\ee_2,\ee_3,\vv)$ of $\mathfrak{ch}_3$ gives rise to a degenerate system
of polynomials.

In order to nonetheless be able to compute $\zeta_{\mathfrak{ch}_3,\topo}(\ess)$
and $\zeta_{\mathfrak{ch}_4,\topo}(\ess)$, we use some elementary facts about
quadratic forms to construct bilinear forms $\gamma_d$ with
$\zeta_{\mathfrak{ch}_d,\topo}(\ess) = \zeta_{\cB(\gamma_d),\topo}(\ess)$, where
$\cB(\gamma_d)$ is now amenable to our methods for $d = 3$ and $d = 4$.  Thus,
write $[a_1, \dotsc, a_r ]$ for the form $(x_1,\dotsc,x_r) \mapsto a_1 x_1^2 +
\dotsb + a_r x_r^2$, where $r\ge 0$.  Let $h(x_1,x_2) = x_1x_2$ be the standard
hyperbolic form and write $h^{\perp a} = h \perp \dotsb \perp h$ ($a$ copies).
Define a quadratic form $f_d$ in $d$ variables as follows: for $d = 4a + b$ with
$0\le b < 4$, let $f_d = h^{\perp 2a} \perp g_b$, where $g_0 = [\,]$, $g_1 =
[1]$, $g_2 = [1,1]$, and $g_3 = h \perp [-1]$.
\begin{lemma*}
  
  If $p\not= 2$, then $(x_1,\dotsc,x_d)\mapsto x_1^2 + \dotsb + x_d^2$ and $f_d$
  are equivalent over $\ZZ_p[\sqrt 2]$.
\end{lemma*}
\begin{proof}
  Let $\sim$ signify equivalence over $\ZZ_p[\sqrt 2]$.  By Hensel's lemma,
  there exist $x,y\in \ZZ_p$ with $x^2 + y^2 = -1$.  Let $A = \begin{mimat*} x &
    -y & \\ y & x \end{mimat*}$ and $B = \frac{1}{\sqrt{2}} \begin{mimat*} -1 &
    1 \\ 1 & 1 \end{mimat*}$.
  Then $A A^\top = \begin{mimat*} -1 & 0 \\ 0 & -1\end{mimat*}$ and $
  B \begin{mimat*} -1 & 0 \\ 0 & 1\end{mimat*} B^\top = \begin{mimat*} 0 & 1 \\
    1 & 0\end{mimat*}$.
  Hence, $[1,1,1] \sim [-1,-1,1] \sim g_3$ and $[1,1,1,1] \sim h\perp h$.
\end{proof}
By quadratic reciprocity, if $p\not= 2$, then $\ZZ_p$ contains $\sqrt 2$ if and
only if $p\equiv \pm 1 \bmod 8$.  Let $\gamma_d$ be the symmetric bilinear form
on $\ZZ^d$ with $\gamma_d(\xx,\xx) = f_d(\xx)$ for $\xx \in \ZZ^d$.
\begin{cor*}
  If $p \not= 2$, then ${\mathfrak{ch}_d\otimes \ZZ_p[\sqrt 2]}
  \approx_{\ZZ_p[\sqrt 2]} {B(\gamma_d)\otimes\ZZ_p[\sqrt 2]}$.  Hence,
  $\zeta_{\mathfrak{ch}_d,\topo}(\ess) =
  \zeta_{\cB(\gamma_d),\topo}(\ess)$. \qed
\end{cor*}

In contrast to $\mathfrak{ch}_3$ and $\mathfrak{ch}_4$, our methods do apply to
$\cB(\gamma_3)$ and $\cB(\gamma_4)$.  By choosing bases similar to the cases of
central products of Heisenberg Lie rings, disregarding monomials as before, the
computation of $\zeta_{\cB(\gamma_3),\topo}(\ess)$ involves a non-degenerate
pair of binomials, while for $\zeta_{\cB(\gamma_4),\topo}(\ess)$, we face a
non-degenerate triple consisting of a trinomial and two binomials.  The
situation for $\cB(\gamma_5)$ is more complicated and, in particular,
degenerate.

\subsection{Outlook: conquering degeneracy}
\label{ss:outlook}

The examples provided in \S\S\ref{ss:sl2}--\ref{ss:Heisenberg} show that the
techniques developed in the present article can be used to compute interesting
examples of $\fP$-adic and topological subalgebra (and, similarly, submodule)
zeta functions.  Unfortunately, for typical examples of algebras and modules of
interest, more often than not, our non-degeneracy assumptions tend to be
violated.  In \cite{topzeta2}, we will describe a refined form of the method
developed here and we will design additional techniques to overcome certain
forms of degeneracy.  These improvements and extensions considerably extend the
scope of our method.  To provide some further motivation for the conjectures
given in \S\ref{s:conjectures}, we now provide some examples computed with the
help of \cite{topzeta2}.

\subsubsection*{Example: $\mathfrak{gl}_2(\ZZ)$}

To the author's knowledge, up until now, $\Sl_2(\ZZ)$ remained the sole example
of an insoluble Lie ring whose topological or $\fP$-adic subalgebra zeta
function has been computed.  Using \cite{topzeta2}, we can add the topological
subalgebra zeta function of $\mathfrak{gl}_2(\ZZ)$ to the list:
\begin{equation}
  \label{eq:gl2}
  \zeta_{\mathfrak{gl}_2(\ZZ),\topo}(\ess) = 
  \frac{27\ess - 14}{6(6\ess - 7)(\ess - 1)^3 \ess}.
\end{equation}

\subsubsection*{Example: nilpotent groups of Hirsch length at most $5$}
\label{ss:Fil4}

Let $\Fil_4$ be the nilpotent Lie ring with $\ZZ$-basis $(\ee_1,\dotsc,\ee_5)$
such that $[\ee_1, \ee_2] = \ee_3$, $[\ee_1,\ee_3] = \ee_4$, $[\ee_1,\ee_4] =
\ee_5$, and $[\ee_2,\ee_3] = \ee_5$, where the remaining commutators
$[\ee_i,\ee_j]$ not implied by anti-commutativity are taken to be zero.  The
ideal zeta function of $\Fil_4 \otimes \ZZ_p$ was computed by Woodward
\cite[Thm\ 2.39]{dSW08}.  ``Despite repeated efforts'' \cite[p.~\!54]{dSW08} of
his to compute them, the local subalgebra zeta functions of $\Fil_4$ remain
unknown.  Using the methods from the present article and the techniques from
\cite{topzeta2}, we find that
\begin{align}
  \nonumber \zeta_{\Fil_4,\topo}(\ess) =\,\, & \bigl(392031360 \ess^9 -
  5741480808 \ess^8 + 37286908278 \ess^7 - \\&\,\, \nonumber 140917681751 \ess^6
  + 341501393670 \ess^5 - 550262853249 \ess^4 + \\&\,\, \nonumber 589429290044
  \ess^3 - 404678115300 \ess^2 + 161557332768 \ess - \\&\,\, \nonumber
  28569052512\bigr) {\mathlarger{/}} \bigl(3(15\ess - 26)(7\ess - 12)(7\ess -
  13)(6\ess - 11)^3 \\&\quad\quad (5\ess- 8)(5\ess - 9)(4\ess - 7)^2(3\ess -
  4)(2\ess - 3)(\ess - 1)\ess\bigr).
  \label{eq:Fil4}
\end{align}
Our conjectures in \S\ref{s:conjectures} suggest that the numbers in
\eqref{eq:Fil4} are far from random.

As we will now explain, our computation of $\zeta_{\Fil_4,\topo}(\ess)$
concludes the classification of topological subgroup zeta functions (see the end
of \S\ref{ss:topsub}) of all finitely generated torsion-free nilpotent groups
$G$ with $\Hirsch(G) \le 5$, where $\Hirsch(G)$ denotes the Hirsch length of
$G$.  Recall that the topological zeta function $\zeta_{G,\topo}(\ess)$ of such
a group $G$ only depends on the $\CC$-isomorphism type of $\mathfrak{L}(G)
\otimes_{\QQ} \CC$, where $\fL(G)$ is the $\Hirsch(G)$-dimensional Lie algebra
over $\QQ$ associated with $G$ under the Mal'cev correspondence.  It is
well-known that, up to $\CC$-isomorphism, there are precisely $16$ non-trivial
nilpotent Lie algebras of dimension at most $5$ over $\CC$ (see e.g.\ \cite[\S
4]{dG07}) and all these Lie algebras admit $\ZZ$-forms.  With the sole exception
of $\Fil_4 \otimes \CC$, each of these Lie algebras over $\CC$ admits a
$\ZZ$-form whose $p$-adic subalgebra zeta functions have been recorded in
\cite{dSW08}.  We can use the techniques from \cite{topzeta2} to compute the
associated topological zeta functions of these Lie rings. As expected, it turns
out that in all cases, the results coincide with those obtained by naively
applying the non-rigorous ``informal approach'' from the introduction to the
$p$-adic formulae in \cite{dSW08}.  In this sense, $\zeta_{\Fil_4,\topo}(\ess)$
was indeed the only missing case which we now managed to compute without
resorting to a computation of the associated $p$-adic zeta functions.

\subsubsection*{Example: submodules for unipotent representations}

The study of submodule zeta functions arising from orders in semisimple
associative $\QQ$-algebras was initiated by Solomon \cite{Sol77}.  In
particular, he gave explicit formulae, valid for almost all primes, for the
local factors in terms of the Wedderburn decomposition of the associated
$\QQ$-algebra.  At the other end of the spectrum lie ideal zeta functions of
nilpotent Lie rings (cf.\ Remark~\ref{r:algmod}(\ref{r:algmod2})), a great
number of which have been recorded in \cite[Ch.\ 2]{dSW08}.  We now consider
topological zeta functions arising from the enumeration of submodules of $\ZZ^d$
under the action of the unipotent group
 $$\mathrm U_d(\ZZ) = \begin{bmatrix} 
   1 & \ZZ & \dotsb & \ZZ \\
   0 & \ddots & \ddots & \vdots \\
   \vdots & \ddots & \ddots  & \ZZ \\
   0 & \dotsb & 0 & 1
 \end{bmatrix} \le \GL_d(\ZZ)$$ for $d\le 5$.  That is, using the language of
 \S\ref{s:groups}, we consider $\zeta_{\cU_d(\ZZ)\acts \ZZ^d,\topo}(\ess)$,
 where $\cU_d(\ZZ)$ is the associative $\ZZ$-subalgebra of $\Mat_d(\ZZ)$
 generated by $\mathrm U_d(\ZZ)$.  The author is not aware of any previous
 computations of these zeta functions or their $\fP$-adic versions.

 It is a trivial matter to determine the first two examples
 \begin{equation}
   \label{eq:U2U3}
   \zeta_{\cU_2(\ZZ) \acts \ZZ^2,\topo}(\ess) = \frac{1}{(2\ess - 1)\ess},
   \qquad
   \zeta_{\cU_3(\ZZ) \acts \ZZ^3,\topo}(\ess) = \frac{4\ess - 1}{2(3\ess - 1)(2\ess - 1)^2\ess}.
 \end{equation}
 For $\mathrm U_4(\ZZ)$, a computation similar to the case of $\mathfrak{ch}_2$
 in \S\ref{ss:Heisenberg} shows that
 \begin{equation}
   \label{eq:U4}
   \zeta_{\cU_4(\ZZ)\acts \ZZ^4,\topo}(\ess) = 
   \frac{3360 \ess^5 - 5192 \ess^4 + 3139 \ess^3 - 930\ess^2 + 136\ess - 8}{8(7\ess - 3)(5\ess - 2)(4\ess - 1)(3\ess - 1)^2(2\ess - 1)^3\ess}.
 \end{equation}

 The computation for $\mathrm U_5(\ZZ)$ is noticeably more involved.  Using
 \cite{topzeta2}, we obtain
 \begin{align}
   \nonumber \zeta_{\cU_5(\ZZ)\acts \ZZ^5,\topo}(\ess) =\,\, & \bigl(
   435891456000 \ess^{14} - 1957609382400 \ess^{13} + 4053337786080 \ess^{12} -
   \\&\,\,\nonumber 5128457632240\ess^{11} + 4429884320966 \ess^{10} -
   2763848527457 \ess^9 + \\&\,\,\nonumber 1284781950540 \ess^8 - 452226036325
   \ess^7 + 121188554644 \ess^6 - \\&\,\,\nonumber 24624421916 \ess^5 +
   3737984412 \ess^4 - 411498360 \ess^3 + 31087152 \ess^2 - \\&\,\,\nonumber
   1443744\ess + 31104 \bigr){\mathlarger{/}} \bigl( 36(13\ess - 6)(11\ess -
   4)(10\ess - 3)(9\ess - 4)(8\ess - 3) \\&\quad\quad (7\ess - 2)(7\ess -
   3)(5\ess - 1)(5\ess - 2)^2(4\ess - 1)^2(3\ess - 1)^2(2\ess - 1)^4\ess\bigr).
   \label{eq:U5}
 \end{align}

\section{Conjectures}
\label{s:conjectures}

Based on substantial experimental evidence gathered using the method described
in this article and its extension in \cite{topzeta2}, we now state and discuss
a series of conjectures on topological zeta functions of algebras and modules.
We first explain our conjectures in the context of subalgebra zeta functions.

 \subsection{General conjectures}
 \label{ss:genconj}

 Let $\cA$ be a non-associative $\fo$-algebra which is free of rank $d$ as an
 $\fo$-module, where $\fo$ is the ring of integers in a number field $k$.
 Experimental evidence (for $k = \QQ$) suggests that the following three
 conjectures hold without any further assumptions on $\cA$.

 \begin{conj}
   \label{conj:degree}
   $\deg_{\ess}( \zeta_{\cA,\topo}(\ess)) = -d$.
 \end{conj}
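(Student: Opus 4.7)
The plan is to reformulate the degree bound as an asymptotic statement at $\ess \to \infty$ and then track it through the ``reduction modulo $\qq-1$'' map of Section~\ref{ss:binomial}. First I would analyse the local zeta function as $\Real(s) \to \infty$. Using Theorem~\ref{thm:coneint} together with $\Zeta_{\cA,K}(s) = (1-q^{-1})^d \zeta_{\cA_\fO}(s)$, the integrand $\abs{x_{11}}^{s-1}\dotsb\abs{x_{dd}}^{s-d}$ concentrates exponentially on the open locus where every diagonal entry $x_{ii}$ is a unit. On that locus the upper triangular matrix $C$ from Remark~\ref{r:coneint}(\ref{r:coneint2}) is invertible over $\fO$, so the divisibility conditions $\norm{\ff(\xx)}\le 1$ are vacuous, and the contribution of this locus evaluates to exactly $(1-q^{-1})^d$. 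Consequently $\Zeta_{\cA,K}(s) = 1 + O(q^{-s})$ uniformly in $q$, and writing $\Zeta_{\cA,K}(s) = W_K(q,q^{-s})$ as in Example~\ref{exs:denef_type}(\ref{exs:denef_type3}), this forces $W_K(\qq,0)=1$ while isolating a factor $(\qq-1)^d$ from the rescaling.

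Second, I would translate this to degree information. Each factor $\qq-1$ lowers the degree in $\ess$ by one under reduction, since a denominator contribution $1-\qq^a\tee^b$ with $b>0$ becomes the linear factor $a-b\ess$ under $\tee\mapsto\qq^{-\ess}$ and reduction modulo $\qq-1$. Combined with $W_K(\qq,0)=1$, this yields the upper bound $\deg_\ess \zeta_{\cA,\topo}(\ess) \le -d$.

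The matching lower bound---that the coefficient of $\ess^{-d}$ in $\zeta_{\cA,\topo}(\ess)$ is non-zero---is the substantive part of the conjecture. Heuristically this coefficient should equal $1$, matching the trivial algebra $(\fo^d,0)$ with $\zeta_{(\fo^d,0),\topo}(\ess) = \prod_{i=0}^{d-1}(\ess-i)^{-1}$. A clean proof should identify the leading coefficient as a universal invariant of $\cA$ by showing that the contributions of all lower-dimensional strata in the combinatorial formula of Theorem~\ref{thm:topeval} carry strictly more $(\qq-1)$-vanishing---due to the extra $\ff$-conditions they impose---and therefore feed into strictly lower-degree terms in $\ess$ after reduction, leaving only the unit stratum to supply the leading $\ess^{-d}$.

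The main obstacle is that Theorem~\ref{thm:topeval} requires global non-degeneracy of the system $\ff$, which typically fails for the families produced by Theorem~\ref{thm:coneint}. For a general $\cA$ one would either lean on the refinements announced in \cite{topzeta2} to handle the degenerate case, or invoke a principalisation of ideals as in Remark~\ref{rem:principal} and track the pole multiplicity at $\ess=\infty$ through the numerical data of an embedded resolution. A separate difficulty is ruling out accidental cancellations among the many summands in the formula; the parallel Conjecture~\ref{conj:comagic_P} at $\ess=0$ suggests that both non-vanishing statements share a common structural origin that a single argument should handle simultaneously.
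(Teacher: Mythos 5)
This statement is Conjecture~\ref{conj:degree}; the paper offers no proof of it at all, only substantial experimental evidence, so there is no argument of the paper for your proposal to match. Judged on its own terms, your proposal is a heuristic programme rather than a proof, and both halves have genuine gaps. For the claimed upper bound, the step from ``$\zeta_{\cA_{\fO}}(s)\to 1$ as $\Real(s)\to\infty$, with the prefactor $(1-q^{-1})^d$'' to ``$\deg_{\ess}\zeta_{\cA,\topo}(\ess)\le -d$'' does not follow: the topological zeta function is obtained by reducing $W(\qq,\qq^{-\ess})$ modulo $\qq-1$, and its degree in $\ess$ is not controlled by the value $W_K(\qq,0)$ nor by counting factors of $\qq-1$. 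Under the reduction, $\tee_j=\qq^{-\ess_j}$ reduces to $1$, not to $0$, so the $s\to\infty$ asymptotics of the local function and the $q\to 1$ limit do not interact in the way you use; moreover, in the formulae of Theorem~\ref{thm:topeval} (and in concrete outputs such as \eqref{eq:Fil4} or \eqref{eq:U5}) the denominator has far more than $d$ linear factors in $\ess$, and the degree $-d$ emerges only after cancellations between numerator and denominator across many summands --- exactly the kind of cancellation your sketch does not address. Even in the uniform case (\ref{PRT}), the paper can only deduce $\sum_{i}\varepsilon_i\ge -d$ from membership in $\MM$; the equality, which is what a degree statement needs, is precisely what the conjecture predicts and is not known.

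For the lower bound you yourself flag the substantive issues: non-vanishing of the $\ess^{-d}$ coefficient, the failure of global non-degeneracy for the families produced by Theorem~\ref{thm:coneint} (so Theorem~\ref{thm:topeval} is not directly applicable), and the need to exclude accidental cancellation among strata. Deferring these to \cite{topzeta2} or to an unspecified analysis of a principalisation does not close them; in particular, no argument is given that the ``unit stratum'' alone supplies the leading term, nor that its contribution survives in the sum. As it stands, your text is a reasonable account of \emph{why one might believe} the conjecture (indeed close in spirit to the paper's own discussion following Conjecture~\ref{conj:degree}), but it is not a proof, and the upper-bound portion as written is incorrect rather than merely incomplete.
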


 This is in contrast to topological zeta functions of polynomials in the sense
 that for $f\in k[X_1,\dotsc,X_n]$, the degree of $\Zeta_{f,\topo}(\ess)$ in
 $\ess$ does not merely depend on $n$.

 If true, Conjecture~\ref{conj:degree} has the following $\fP$-adic consequence
 which, to the author's knowledge, has not been recorded before.  Suppose that
 there are numbers $a_i\in \ZZ$, $b_i\in \ZZ\setminus\{0\}$, and $\varepsilon_i
 = \pm 1$ ($i\in I$, for finite $I$) with the following property:
 \begin{itemize}
 \item[(\textlabel{$\clubsuit$}{PRT})] For every $p$-adic field $K\supset k$
   with valuation ring $\fO$, unless $p$ belongs to some finite exceptional set,
   we have
   \[
   \zeta_{\cA_{\fO}}(s) = \prod_{i\in I} (1-q^{a_i - b_i s})^{\varepsilon_i},
   \]
   where $q$ is the residue field size of $K$.
 \end{itemize}
 For instance, \eqref{eq:sl2O} shows that (\ref{PRT}) is satisfied for $\cA =
 \Sl_2(\ZZ)$.  Furthermore, various examples of local ideal zeta functions of
 nilpotent Lie rings given in \cite[Ch.\ 2]{dSW08} satisfy the evident analogue
 of (\ref{PRT}) (at least when $K = \QQ_p$, see \S\ref{ss:topsubmodule} and cf.\
 Remark~\ref{r:justify_voodoo}).

 Assuming that (\ref{PRT}) holds, one can deduce from Theorems~\ref{thm:top} and
 \ref{thm:subdenef} that $\sum_{i\in I}\varepsilon_i \ge -d$.
 Conjecture~\ref{conj:degree} now predicts that, in fact, the equality
 $\sum_{i\in I}\varepsilon_i = -d$ holds.

 \begin{conj}
   \label{conj:zero}
   $\zeta_{\cA,\topo}(\ess)$ has a pole at zero.
 \end{conj}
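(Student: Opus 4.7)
My plan is to isolate the pole at $\ess = 0$ inside the combinatorial formula of Theorem~\ref{thm:topeval} and to trace it back to the explicit pole of $\zeta_{(\fo^d,0),\topo}(\ess) = \prod_{i=0}^{d-1}(\ess - i)^{-1}$ recalled in \S\ref{s:intro}.  By Theorem~\ref{thm:coneint} and Remark~\ref{r:eval_and_algmod}, the local zeta function $\zeta_{\cA_{\fO}}(s)$ admits a $\fP$-adic integral representation on $\Tr_d(\fO)$ with integrand $\abs{x_{11}}^{s-1}\dotsb\abs{x_{dd}}^{s-d}$ and divisibility conditions $\norm{\ff(\xx)}\le 1$ encoding the subalgebra constraint.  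Under the affine specialisation $s_j = \ess - j$ of Remark~\ref{r:subdenef}, the associated multivariate topological zeta function restricts to $\zeta_{\cA,\topo}(\ess)$, so any pole at $\ess = 0$ must arise from some specialised term $\red{\tilde W^\tau_{\bm g}(\ess)}$ in the sense of Definition~\ref{d:W}.

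The core of the plan is then as follows.  Assuming first that a suitable $\ff$ is globally non-degenerate in the sense of Definition~\ref{d:nd}(\ref{d:nd2}), Theorem~\ref{thm:topeval} expands $\zeta_{\cA,\topo}(\ess)$ as a finite sum over pairs $(\tau,\bm g)$ of a $\cC_0$-visible face $\tau$ of $\cN$ and a subset $\bm g \subset \ff$.  I would identify a distinguished stratum --- heuristically, one on which every initial form $f^\tau$ is a Laurent monomial (so that the divisibility conditions cut out a polytopal region rather than a genuine subvariety) and the monomial integral decouples along the diagonal --- whose contribution reproduces $\prod_{i=0}^{d-1}(\ess - i)^{-1}$ up to a nonzero constant factor, yielding a simple pole at $\ess = 0$ with an explicit residue.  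The remaining terms would then need to be controlled: a priori each can develop its own pole at $\ess = 0$ when a generating-function factor in Definition~\ref{d:W} becomes singular under the specialisation $s_j = \ess - j$, and the goal is to show that the signed sum of these residues, weighted by the Khovanskii numbers $\Khovanskii^{\rel}_{\bm g}\bigl(\Newton(f^\tau)\bigr)_{f\in \ff}$, does not cancel the dominant residue identified above.

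The hard part is two-fold.  First, Theorem~\ref{thm:topeval} requires global non-degeneracy, a basis-dependent condition failed by many interesting algebras; removing it will require either the refinements announced in \cite{topzeta2} or a basis-free residue calculation.  The latter is subtle, since Conjecture~\ref{conj:zero} is invariant under $\bar k$-isomorphism by Proposition~\ref{prop:basetop}(\ref{prop:basetop2}) while non-degeneracy is not.  Second, even in the non-degenerate case, no convex-geometric identity currently controls the weighted sum of residues of the $\red{\tilde W^\tau_{\bm g}(\ess)}$ at $\ess = 0$, and establishing such an identity is precisely the missing input.  In view of Conjecture~\ref{conj:comagic_P} from the introduction, which is expected to pin down the actual value of $\ess \cdot \zeta_{\cA,\topo}(\ess)$ at $\ess = 0$, it seems most fruitful to target Conjecture~\ref{conj:comagic_P} directly and read off Conjecture~\ref{conj:zero} as a corollary; this reframes the problem as a structural comparison between $\cA \otimes_{\fo} \bar k$ and the abelian algebra $(\bar k^d, 0)$, which should manifest combinatorially as an alternating-sum cancellation in Theorem~\ref{thm:topeval}.
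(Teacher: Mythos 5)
There is no proof of this statement in the paper for you to match: \ref{conj:zero} is precisely that, a conjecture, stated on the strength of machine computations, and the author explicitly remarks that even on the $\fP$-adic side ``the author is not aware of an explanation as to why this is so.'' Your text is likewise a research plan rather than a proof, and you candidly flag the two missing ingredients yourself: the existence of a distinguished stratum in Theorem~\ref{thm:topeval} whose contribution reproduces $\prod_{i=0}^{d-1}(\ess-i)^{-1}$, and a convex-geometric identity showing that the $\Khovanskii^{\rel}$-weighted residues of the remaining terms do not cancel it. Neither is established, and the second is exactly the open content of the conjecture, so the proposal has a genuine (indeed total) gap. Two further points would have to be confronted even to make the plan coherent. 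First, the reduction to Conjecture~\ref{conj:comagic_P} is circular within this paper (that is also an unproved conjecture) and in any case only concerns nilpotent $\cA$, whereas \ref{conj:zero} is asserted for arbitrary $\cA$ --- e.g.\ $\Sl_2(\ZZ)$ and $\mathfrak{gl}_2(\ZZ)$, where the residue $-1/3$ for $\mathfrak{gl}_2(\ZZ)$ already shows the comparison with $(\fO^d,0)$ fails outside the nilpotent case. Second, the picture of a ``dominant simple pole plus controlled corrections'' is too naive: the paper's own examples in \S\ref{ss:conjnilpotent} (the ring $\cR$ with a double pole at zero, and $\zeta_{(\ZZ^d,\dtimes),\topo}(\ess)=\ess^{-d}$) show that the pole order at zero varies with the algebra, so several strata must conspire and a residue-level non-cancellation argument at a single simple pole cannot be the whole story. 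Your observation that global non-degeneracy (Definition~\ref{d:nd}(\ref{d:nd2})) is basis-dependent while the conjecture is $\bar k$-isomorphism-invariant by Proposition~\ref{prop:basetop}(\ref{prop:basetop2}) is a fair diagnosis of why Theorem~\ref{thm:topeval} alone cannot settle the question, but it does not by itself supply the missing argument.
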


 Again, topological zeta functions of polynomials behave more erratically as the
 simple example of $\Zeta_{X^e,\topo}(\ess) = 1/(e\ess+1)$ shows.  On the
 $\fP$-adic side, the meromorphic continuations of local subalgebra zeta
 functions seem to have poles at zero but the author is not aware of an
 explanation as to why this is so.

 \begin{conj}
   \label{conj:roots}
   If $s\in \CC$ satisfies $\zeta_{\cA,\topo}(s) = 0$, then $0 < \Real(s) <
   d-1$.
 \end{conj}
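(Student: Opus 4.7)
Since this is a conjecture rather than a theorem, my plan is a strategy rather than an outline. The starting point is to write $\zeta_{\cA,\topo}(\ess)$ using Theorems~\ref{thm:coneint} and~\ref{thm:topeval}: after identifying the integrand via Remark~\ref{r:coneint}(\ref{r:coneint2}), one obtains (possibly after the subdivisions of $\cC_0$ from \cite{topzeta2} to enforce global non-degeneracy of the data produced in Remark~\ref{r:coneint}(\ref{r:coneint2})) a combinatorial expression
\[
  \zeta_{\cA,\topo}(\ess)
  \;=\;
  \sum_{\bm g \subset \ff,\,\tau\subset \cN}
  \Khovanskii^{\rel}_{\bm g}\bigl(\Newton(f^\tau)\bigr)_{f\in \ff}
  \dtimes
  \red{\widetilde W^{\tau}_{\bm g}(\ess)},
\]
where the reduced factors $\red{\widetilde W^{\tau}_{\bm g}(\ess)}$ are, after the affine specialisation $\tee_j\gets \qq^{j-s}$ of Remark~\ref{r:eval_and_algmod}, products of linear factors $1/(c-b\ess)$ with $b,c\in \NN_0$. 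A direct inspection of the cones $\cC_0^\tau(\bm g)$ confines all such candidate poles to the closed interval $[0,d]$, which already locates the possible zeros on the real axis in a bounded strip; the content of the conjecture is to sharpen the bounds to the open strip $(0,d-1)$.

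For the upper bound $\Real(s) < d-1$, I would attempt to prove a $\fP$-adic analogue: that, for all but finitely many residue characteristics, the meromorphic continuation of $(1-q^{-1})^d\dtimes \zeta_{\cA_{\fO}}(s)$ has no zero with $\Real(s)\ge d-1$.  Because $\zeta_{\cA_{\fO}}(s)$ is a Dirichlet series with non-negative coefficients convergent in $\Real(s)>d$, non-vanishing there is automatic; the task is to propagate non-vanishing down to $\Real(s) = d-1$, using the explicit factorisation of Theorem~\ref{thm:evaluate} together with a quantitative bound ruling out "spurious" cancellation in that half-strip.  Passing to the topological limit via Theorem~\ref{thm:top}(\ref{thm:top3}) would then transfer the conclusion, provided one controls the $\ell$-adic dependence on $q$ uniformly as $q\to 1$.

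The lower bound $\Real(s) > 0$ lies outside the Dirichlet region and looks genuinely harder.  Here I would lean on two auxiliary inputs: (a)~Conjecture~\ref{conj:comagic_P}, which predicts that $\zeta_{\cA,\topo}(\ess)/\zeta_{(\ZZ^d,0),\topo}(\ess)$ takes the value $1$ at $\ess = 0$ and, in particular, is non-vanishing there; and (b)~an approximate symmetry $\ess\mapsto (d-1)-\ess$ of the "reduced numerator" observed in many uniform examples, which would reduce the left edge of the strip to the right. Either input, together with the explicit formula, should force the missing half of the conjecture; proving them in full generality is, of course, of comparable difficulty to the conjecture itself.

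The main obstacle to all of this is that the expression above is a signed sum: both $\Khovanskii^{\rel}_{\bm g}$ and $\red{\widetilde W^{\tau}_{\bm g}(\ess)}$ can change sign as $(\tau,\bm g)$ varies, and the predicted zero-free region is visible only after massive cancellation across faces and subsets.  I therefore expect that the decisive step will not be a termwise positivity argument but a new global convex-geometric identity, most likely a manifestly zero-free rewriting of the normalised topological zeta function $\zeta_{\cA,\topo}(\ess)/\zeta_{(\ZZ^d,0),\topo}(\ess)$ on the complement of the open strip $\{0<\Real(s)<d-1\}$; producing such a rewriting, and in particular understanding why it should exist uniformly in $\cA$, is the feature of the problem that I find most mysterious.
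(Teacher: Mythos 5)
There is nothing in the paper to compare your proposal against: the statement you were asked about is stated as a \emph{conjecture} (Conjecture~\ref{conj:roots}), supported only by the computed examples of \S\ref{s:app} and further experiments with \cite{topzeta2}; the paper offers no proof, and indeed remarks only that the ``$0<\Real(s)$'' part would force the numerator of $\zeta_{\cA,\topo}(\ess)$ to have alternating signs, which is observed in the examples, and that the bound $d-1$ looks pessimistic. Your text is likewise a strategy rather than a proof, so the honest assessment is of the strategy itself, and there are concrete steps in it that would fail. First, the claim that inspecting the cones $\cC_0^\tau(\bm g)$ ``already locates the possible zeros \ldots in a bounded strip'' is a non sequitur: the convex-geometric formula of Theorem~\ref{thm:topeval} constrains the \emph{poles} of the individual summands, but after the signed sum over faces $\tau$ and subsets $\bm g$ is brought over a common denominator, the zeros of the resulting numerator are not controlled by pole data at all; knowing the candidate poles says nothing about where the numerator vanishes.

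Second, the proposed transfer of a $\fP$-adic zero-free half-plane to the topological zeta function via Theorem~\ref{thm:top} does not work as stated: the topological zeta function is obtained as an $\ell$-adic limit of values $\Zeta_{\fp}(df;s)$ at integer arguments $s$ outside finitely many hyperplanes, not as a locally uniform limit of meromorphic functions on a complex domain, so there is no Hurwitz-type principle available, and even in the analytic setting zeros can appear in a limit of zero-free functions. (Non-vanishing of $\zeta_{\cA_{\fO}}(s)$ for $\Real(s)>d$ from positivity of Dirichlet coefficients also gives no purchase on the region $\Real(s)\ge d-1$ after meromorphic continuation.) Third, your route to the lower bound $\Real(s)>0$ rests on Conjecture~\ref{conj:comagic_P} and on an unproven approximate symmetry $\ess\mapsto(d-1)-\ess$, neither of which is established in the paper and both of which, as you concede, are of comparable depth to the conjecture itself; note also that non-vanishing of the quotient at the single point $\ess=0$ (which concerns residues at a pole) would not by itself exclude zeros elsewhere on the line $\Real(s)=0$ or to its left. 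So the proposal correctly identifies the combinatorial formula and the cancellation phenomenon as the heart of the difficulty, but none of its three pillars closes, or could with the tools in this paper close, the actual gap; the statement remains open.
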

 The ``$0 < \Real(s)$'' part of Conjecture~\ref{conj:roots} would imply that the
 numerator of $\zeta_{\cA,\topo}(\ess) \in \QQ(\ess)$ has alternating signs
 which is indeed the case for the examples given in \S\ref{s:app}.  We note that
 in all examples known to the author, the upper bound of $d-1$ for the real
 parts of the roots of $\zeta_{\cA,\topo}(\ess)$ is rather pessimistic.

 \subsection{The nilpotent case: behaviour at zero}
 \label{ss:conjnilpotent}

 Let $\cA$ be as in \S\ref{ss:genconj}. In particular, let $d$ denote the rank
 of $\cA$ as an $\fo$-module.  If $\cA$ is nilpotent, then experimental evidence
 suggests a considerably strengthened form of Conjecture~\ref{conj:zero} to be
 true.  We shall henceforth assume that $\cA$ is either Lie or (non-unital)
 associative so that there is no ambiguity as to what we mean by nilpotency of
 $\cA$.

 \begin{conj}[topological form]
   \label{conj:comagic}
   If $\cA$ is nilpotent, then $\zeta_{\cA,\topo}(\ess)$ has a simple pole at
   zero with residue $(-1)^{d-1} /(d-1)!$.
 \end{conj}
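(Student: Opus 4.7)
\medskip

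\noindent\textbf{Proof proposal.} The plan is to prove the stronger topological assertion
\[
\lim_{\ess \to 0} \frac{\zeta_{\cA,\topo}(\ess)}{\zeta_{(\fo^d,0),\topo}(\ess)} = 1,
\]
which is the topological shadow of Conjecture~\ref{conj:comagic_P}. Since the informal discussion in the introduction shows that $\zeta_{(\fo^d,0),\topo}(\ess) = 1/\bigl(\ess(\ess-1)\dotsb(\ess-d+1)\bigr)$ has residue $(-1)^{d-1}/(d-1)!$ at $\ess = 0$, this limit identification would immediately yield the claim, and it has the appealing feature of being a pointwise comparison rather than a bare residue computation.

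First I would invoke Theorem~\ref{thm:coneint} together with Remark~\ref{r:eval_and_algmod} to express $\zeta_{\cA_{\fO}}(s)$ as a $\fP$-adic integral over a subset of $\Tr_d(\fO)$ with integrand $\abs{x_{11}}^{s-1}\dotsb\abs{x_{dd}}^{s-d}$ and integration domain cut out by $\norm{\ff(\xx)} \le 1$ for an explicit family $\ff$ assembled from the structure constants of $\cA$ via Remark~\ref{r:coneint}(\ref{r:coneint2}). For $\cA = (\fo^d,0)$ the family $\ff$ is trivial, the domain is all of $\Tr_d(\fO)$, and a direct calculation recovers $\zeta_{(\fo^d,0),p}(s) = \zeta_p(s)\dotsb\zeta_p(s-d+1)$. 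Applying a (degeneracy-resolving refinement of) Theorem~\ref{thm:topeval} would then decompose $\zeta_{\cA,\topo}(\ess)$ as a sum over $\cC_0$-visible faces $\tau \subset \Newton(\prod\ff)$ and subsets $\bm g \subset \ff$, with the ``trivial'' face and $\bm g = \emptyset$ already reproducing $\zeta_{(\fo^d,0),\topo}(\ess)$ verbatim. The remaining task is to show that every other contribution is regular at $\ess = 0$, or that they collectively sum to something regular there.

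Nilpotency should enter as follows. The central filtration on $\cA$ forces every monomial appearing in any $f \in \ff$ (built from iterated products $C_m C_n \adj(C)$) to involve the diagonal coordinates $x_{11},\dotsc,x_{dd}$ to a strictly positive aggregate total degree, with a lower bound controlled by the nilpotency class. I expect this to force the normal cone of every non-trivial face $\tau$ of $\Newton(\prod\ff)$ to satisfy a genuine diagonal-dominance inequality, and hence the associated rational functions $W^\tau_{\bm g}(\qq,\tee)$, after the monomial substitution $s_j = s-j$ of Remark~\ref{r:eval_and_algmod}, to have $\red{W^\tau_{\bm g}}$ regular at $\ess = 0$. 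The relative Khovanskii numbers $\Khovanskii^{\rel}_{\bm g}\bigl(\Newton(f^\tau)\bigr)_{f\in\ff}$ would then enter purely as combinatorial weights in a finite, pole-free sum.

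The hardest step will be the geometric implementation of ``nilpotency implies diagonal dominance'': isolating a precise, basis-invariant condition on the Newton polytopes of the members of $\ff$ that provably follows from nilpotency and that suffices to rule out a pole at $\ess = 0$ outside of the principal term. A secondary obstacle is that Theorem~\ref{thm:topeval} as stated requires global non-degeneracy, which routinely fails for nilpotent algebras of interest, so the argument would have to be carried out inside the enlarged framework of \cite{topzeta2}. The consistent numerical evidence from \S\ref{s:app} and the compatibility with Conjecture~\ref{conj:comagic_P} make this route plausible, but a complete proof appears to require essentially new structural input beyond what Theorem~\ref{thm:topeval} alone provides.
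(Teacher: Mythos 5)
This statement is a \emph{conjecture} in the paper (Conjecture~\ref{conj:comagic}); the paper offers no proof of it, only experimental evidence and the observation that it would follow as a ``topological shadow'' of the $\fP$-adic Conjecture~\ref{conj:comagic_P}. Your proposal is therefore not being measured against an argument in the paper, and judged on its own terms it is a research plan rather than a proof: the decisive steps are left open, as you yourself concede in the final paragraph.

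Two gaps are concrete. First, the claim that the ``trivial face and $\bm g = \emptyset$'' term of the decomposition in Theorem~\ref{thm:topeval} reproduces $\zeta_{(\ZZ^d,0),\topo}(\ess)$ verbatim is not justified and is almost certainly not how the comparison can work: in that theorem the cones $\cC_0^\tau(\bm g)$ arise from a partition of $\cC_0$ indexed by \emph{all} $\cC_0$-visible faces $\tau$ of $\Newton(\prod\ff)$, so no single pair $(\tau,\bm g)$ yields the full monomial integral over $\Tr_d(\fO)$ that gives $\zeta_{(\fO^d,0)}(s)$; the principal pole at $\ess=0$ must emerge from the whole sum, and individual non-principal terms need not be regular there. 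Your proposed reduction to ``every other contribution is regular at $\ess=0$'' is thus itself an unproven (and possibly false, term-by-term) assertion, and the key mechanism --- ``nilpotency implies diagonal dominance'' of the Newton polytopes of the structure-constant polynomials, in a basis-invariant form strong enough to control $\red{W^\tau_{\bm g}}$ at $\ess=0$ --- is stated as an expectation with no argument. Second, Theorem~\ref{thm:topeval} requires global non-degeneracy of $\ff$, which, as the paper stresses in \S\ref{ss:outlook}, typically fails for nilpotent algebras of interest; appealing to \cite{topzeta2} does not close this, since that reference supplies computational refinements for handling degeneracy in examples, not a general structural theorem of the kind your argument would need. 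In short, the strategy (compare with $\zeta_{(\ZZ^d,0),\topo}$, exploit the explicit convex-geometric formulae, feed in nilpotency) is a reasonable direction consistent with the paper's own heuristics, but the statement remains a conjecture and your proposal does not establish it.
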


 For example, in the case of $\Fil_4$ from \S\ref{ss:Fil4}, we have
$$\ess \zeta_{\Fil_4,\topo}(\ess)\big\vert_{\ess=0} = 
\frac{28569052512}{3(-26)(-12)(-13)(-11)^3(-8)(-9)(-7)^2(-4)(-3)(-1)} = -1/24.$$

Some assumption on $\cA$ is certainly required in Conjecture~\ref{conj:comagic}.
For example, while $\zeta_{\mathfrak{gl}_2(\ZZ),\topo}(\ess)$ (see
\eqref{eq:gl2}) does have a simple pole at zero, the residue is $-1/3$ instead
of $-1/6$.  Also, zero need not be a simple pole in general.  For example,
consider the non-associative ring $\cR = \ZZ \ee_1 \oplus \ZZ \ee_2 \oplus \ZZ
\ee_3$ with $\ee_1^2 = \ee_2^{\phantom 1}$, $\ee_2^2 = \ee_3^{\phantom 1}$,
$\ee_3^2 = \ee_1^{\phantom 1}$, and $\ee_i^{\phantom 1} \ee_j^{\phantom 1} = 0$
for $i\not= j$.  Using the techniques from \cite{topzeta2}, we compute $
\zeta_{\cR,\topo}(\ess) = \frac{(15\ess - 7)(5\ess - 1)}{3(7\ess - 3)(2\ess -
  1)^2\ess^2}.  $ Such examples can be found more easily for ideal zeta
functions.  For instance, if $(\ZZ^d,\!\dtimes)$ denotes $\ZZ^d$ endowed with
component-wise multiplication, then $\zeta_{(\ZZ^d,\!\dtimes),\topo}(\ess) =
\ess^{-d}$.

\addtocounter{conj}{-1}

Conjecture~\ref{conj:comagic} might merely be the topological shadow of a
$\fP$-adic conjecture which, to the author's knowledge, has not been previously
noted:
 
\begin{conj}[$\fP$-adic form]
  \label{conj:comagic_P}
  Suppose that $\cA$ is nilpotent.  Let $K \supset k$ be a $p$-adic field with
  valuation ring $\fO$.  Then $\zeta_{\cA_{\fO}}(s)$ has a simple pole at zero
  and
  \[
  \frac{\zeta_{\cA_{\fO}}(s)}{\zeta_{(\fO^d,0)}(s)}\Bigg\vert_{s=0} = 1.
  \]
\end{conj}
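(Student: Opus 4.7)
I would proceed by induction on the $\fo$-rank $d$ of $\cA$, using the $\fP$-adic integral representation from Theorem~\ref{thm:coneint} to compare $\zeta_{\cA_{\fO}}(s)$ with $\zeta_{(\fO^d,0)}(s)$. The base case $d=1$ is trivial. For $d\ge 2$, since $\cA$ is nilpotent we have $\cA^2\subsetneq \cA$, and so one may choose an $\fo$-basis $(\ee_1,\dotsc,\ee_d)$ with $\ee_1$ projecting to a generator of a direct summand of $\cA/\cA^2$ and $\cA':=\langle \ee_2,\dotsc,\ee_d\rangle$ a codimension-$1$ nilpotent subalgebra containing $\cA^2$. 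Equivalently, the structure constants $c_{jk}^1$ vanish for all $j,k\ge 2$; this basis choice is the place where nilpotency enters.

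\paragraph{}The next step is to translate this vanishing into information on the Laurent polynomials $\ff$ of Remark~\ref{r:coneint}(\ref{r:coneint2}). A short computation with the adjugate shows that at $x_{11}=0$ only the first row of $\adj(C)$ survives, so for all $m,n,k$,
\[
(C_m C_n \adj(C))_k\big|_{x_{11}=0} \;=\; (C_m C_n)_1\big|_{x_{11}=0} \dtimes \adj(C)_{1k}\big|_{x_{11}=0};
\]
and $(C_mC_n)_1|_{x_{11}=0}=\sum_{j,k\ge 2}x_{mj}x_{nk}c_{jk}^1=0$ by the basis choice. Hence the numerator of every $f\in\ff$ is divisible by $x_{11}$, cancelling exactly one $x_{11}^{-1}$ factor from $\det(C)^{-1}$. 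Each $f\in\ff$ therefore extends to a Laurent polynomial regular along $\{x_{11}=0\}$, and its restriction there is precisely the defining constraint for subalgebras of $\cA'_{\fO}$, augmented by the extra ``row'' $C_1|_{x_{11}=0}=(0,x_{12},\dotsc,x_{1d})\in \cA'_{\fO}$.

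\paragraph{}Integrating $x_{11}$ out first and separating $\nu(x_{11})\le N$ from $\nu(x_{11})>N$ for large $N$ should yield a decomposition
\[
\zeta_{\cA_{\fO}}(s) \;=\; \zeta_p(s) \dtimes \zeta_{\cA'_{\fO}}(s-1) \;+\; H(s),
\]
with $H(s)$ regular at $s=0$ --- parallel to the identity $\zeta_{(\fO^d,0)}(s) = \zeta_p(s) \dtimes \zeta_{(\fO^{d-1},0)}(s-1)$. Matching residues at $s=0$ then reduces the conjecture to the equality $\zeta_{\cA'_{\fO}}(-1)=\zeta_{(\fO^{d-1},0)}(-1)$, a statement about the nilpotent algebra $\cA'$ of rank $d-1$ at the special value $s=-1$.

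\paragraph{}The principal obstacles are twofold. First, the required identity at $s=-1$ is not directly supplied by the inductive hypothesis (which concerns $s=0$ only); this suggests that the conjecture ought to be strengthened to control an initial segment of the Laurent expansion of $\zeta_{\cA_{\fO}}(s)/\zeta_{(\fO^d,0)}(s)$ at $s=0$, so that its implications propagate to other special values under the shift induced by the factorisation. Second, the identification of the boundary integral with $\zeta_{\cA'_{\fO}}(s-1)$ is delicate: the extra row $C_1|_{x_{11}=0}$ carries the auxiliary variables $x_{12},\dotsc,x_{1d}$ that must be absorbed by an $\fO$-invertible change of coordinates on the ``first-row'' part of $\Tr_d(\fO)$, and verifying that the remainder $H(s)$ contributes no hidden pole at $s=0$ (i.e. that higher-order Taylor terms in $x_{11}$ integrate holomorphically) likely requires adapting the basis to the entire lower central series of $\cA$ rather than only to $\cA/\cA^2$; this is where a single codimension-$1$ reduction would probably fall short of the full strength of nilpotency needed.
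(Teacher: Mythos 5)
You should first be aware that the paper contains no proof of this statement: it is Conjecture~\ref{conj:comagic_P}, put forward on experimental grounds as a $\fP$-adic explanation of Conjecture~\ref{conj:comagic}, so the only question is whether your sketch proves it — and it does not. The decisive difficulty is exactly the step you introduce with ``should yield'': the decomposition $\zeta_{\cA_{\fO}}(s)=\zeta_p(s)\dtimes\zeta_{\cA'_{\fO}}(s-1)+H(s)$ with $H$ regular at $s=0$. This cannot hold in the generality your induction needs. Take for $\cA$ the filiform nilpotent Lie ring of rank $4$ with $[\ee_1,\ee_2]=\ee_3$, $[\ee_1,\ee_3]=\ee_4$ and all other brackets zero, and $K=\QQ_p$. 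Your construction admits (at least) two legitimate corank-one ideals containing $\cA^2=\langle\ee_3,\ee_4\rangle$: the abelian one $\langle\ee_2,\ee_3,\ee_4\rangle$ and $\langle\ee_1,\ee_3,\ee_4\rangle$, which is isomorphic to the Heisenberg Lie ring $\fh$; both arise from bases satisfying your condition $c^1_{jk}=0$, and both can even be taken adapted to the whole lower central series, so your closing suggestion does not remove the ambiguity. If the decomposition held for both choices, comparing residues at $s=0$ of the one function $\zeta_{\cA_{\ZZ_p}}(s)$ would force $\zeta_{\fh_{\ZZ_p}}(-1)=\zeta_{(\ZZ_p^3,0)}(-1)$; but the classical formula $\zeta_{\fh_{\ZZ_p}}(s)=\zeta_p(s)\zeta_p(s-1)\zeta_p(2s-2)\zeta_p(2s-3)/\zeta_p(3s-3)$ gives $\zeta_{\fh_{\ZZ_p}}(-1)=\frac{1-p^{6}}{(1-p)(1-p^{2})(1-p^{4})(1-p^{5})}$, which differs from $\zeta_{(\ZZ_p^3,0)}(-1)=\frac{1}{(1-p)(1-p^{2})(1-p^{3})}$. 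Hence for at least one admissible choice either the main term is not $\zeta_p(s)\zeta_{\cA'_{\fO}}(s-1)$ or $H(s)$ has a pole at $s=0$, and your argument gives no way to decide which, nor any mechanism to control $H$. For the same reason the repair you propose cannot work: the identity $\zeta_{\cA'_{\fO}}(-1)=\zeta_{(\fO^{d-1},0)}(-1)$ that your residue matching requires is simply false for $\cA'=\fh$, so no strengthening of an induction hypothesis at $s=0$ (or along an initial segment of the Laurent expansion there) can deliver it.

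The algebraic observation in your second paragraph is correct: with $c^1_{jk}=0$, each $f\in\ff$ from Remark~\ref{r:coneint}(\ref{r:coneint2}) is regular along $x_{11}=0$. But note that this uses nilpotency only through the containment of $\cA^2$ in a proper direct summand, a property shared by $\mathfrak{gl}_2(\ZZ)$ (whose derived subring $\Sl_2(\ZZ)$ has corank one), for which the paper's computation \eqref{eq:gl2} shows that the topological analogue of the conclusion fails; so whatever makes the conjecture true must exploit nilpotency far more deeply, as you yourself suspect. Moreover, the identification of the boundary contribution with $\zeta_{\cA'_{\fO}}(s-1)$ is not merely ``delicate'': the conditions coming from the products $C_1C_n$ still involve $x_{12},\dotsc,x_{1d}$ and encode closure of the candidate lattice under multiplication by the coset representative of $\ee_1$, which is not a condition on subalgebras of $\cA'_{\fO}$ alone, and subalgebra zeta functions are not known to satisfy any recursion of the proposed shape (the abelian identity you imitate is really a statement about submodules). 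In short, the statement remains an open conjecture, and your plan, beyond the gaps you acknowledge, rests on a decomposition that provably cannot hold in the required generality.
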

Here, $(\fO^d,0)$ denotes $\fO^d$ endowed with the zero multiplication---it is
well-known that $\zeta_{(\fO^d,0)}(s) = \prod_{i=0}^{d-1} (1-q^{i-s})^{-1}$,
where $q$ is the residue field size of $K$.  Also, we identified the function
$\frac{\zeta_{\cA_{\fO}}(s)}{\zeta_{(\fO^d,0)}(s)}$ (which is defined for
$\Real(s) > d$, say) with its meromorphic continuation to the complex plane.

\subsection{On topological submodule zeta functions}
\label{ss:topsubmodule}

Some care has to be taken to adapt
Conjectures~\ref{conj:degree}--\ref{conj:comagic_P} to the case of submodule
zeta functions, primarily because our definition of topological submodule zeta
functions in \S\ref{ss:topsub} allows them to vanish.  For example, if $K$ is
any $p$-adic field with valuation ring $\fO$ and residue field size $q$, then
$\zeta_{\Mat_d(\fO) \acts \fO^d}(s) = 1/(1-q^{-ds})$, where $\Mat_d(\fO)$ acts
on $\fO^d$ in the natural way; cf.\ \cite[Prop.\ 4.1]{dS00}.  Hence, if $d > 1$,
then $\zeta_{\Mat_d(\ZZ) \acts \ZZ^d,\topo}(\ess) = 0$.  We predict that
Conjectures~\ref{conj:degree}--\ref{conj:comagic_P} also hold for submodule zeta
functions whenever they are non-zero.  The nilpotency condition in
Conjecture~\ref{conj:comagic_P} of course needs to be adapted: for a free
$\fo$-module $M$ of finite rank, we only consider nilpotent (associative)
subalgebras $\cE$ of $\End_{\fo}(M)$.  We note that, although we do not have a
proof of this, nilpotency of $\cE \subset \End_{\fo}(M)$ seems to imply that
$\zeta_{\cE \acts M,\topo}(\ess) \not= 0$.

The submodule zeta functions arising from nilpotent subalgebras
$\cE\subset \End_{\fo}(M)$ include those enumerating ideals in nilpotent Lie
rings.  Indeed, if $\cL$ is a nilpotent Lie algebra over $\fo$ which is free of
finite rank as an $\fo$-module, then, by the standard proof of Engel's theorem
(see e.g.\ \cite[Ch.\ II, \S\S 2--3]{Jac79}), $\mathrm{ad}(\cL)$ generates a
nilpotent associative subalgebra $\cE$ of $\End_{\fo}(\cL)$ and the
$\cE$-submodules of $\cL$ are precisely the ideals of $\cL$.

{\footnotesize \bibliography{topzeta}}

\end{document}